\documentclass[12pt]{amsart}

\usepackage{enumerate}

\usepackage[active]{srcltx}
\usepackage{nicefrac}

\setlength{\textwidth}{165mm}

\setlength{\textheight}{215mm}
\setlength{\parindent}{8mm}
\setlength{\oddsidemargin}{0pt}
\setlength{\evensidemargin}{0pt}
\setlength{\topmargin}{0pt}

\usepackage{amsthm,amssymb,setspace}
\usepackage[pagebackref,colorlinks,linkcolor=red,citecolor=blue,urlcolor=blue,hypertexnames=true]{hyperref}
\usepackage{amsrefs, amsfonts}
\usepackage[matrix, arrow]{xy}

\usepackage{makecell}

\newcommand{\C}{\mathbb C}

\newcommand{\suppe}{{\rm supp}}
\newcommand{\suppg}{{\it supp}}
\newcommand{\fpc}{{\rm fpc}}
\newcommand{\fix}{{\rm Fix}}
\newcommand{\eg}{\text{e.g.\,}}
\newcommand{\ie}{\text{i.e.,\;}}

\newcommand{\IChar}{{\rm IChar}}
\newcommand{\ICh}{{\rm ICh}}
\newcommand{\IRS}{{\rm IRS}}
\newcommand{\Sub}{{\rm Sub}}

\theoremstyle{plain}
\newtheorem{theorem}{Theorem}[section]

\newtheorem{thma}{Theorem}

\newtheorem{lemma}[theorem]{Lemma}
\newtheorem{proposition}[theorem]{Proposition}

\theoremstyle{definition}
\newtheorem{question}[theorem]{Question}
\newtheorem{definition}[theorem]{Definition}

\theoremstyle{remark}

\newtheorem{remark}[theorem]{Remark}

\newcommand{\comment}[1]{\textcolor{blue}{\bf [#1]}}


\begin{document}

\onehalfspace

\title{A character approach to the ISR property}

\author{Artem Dudko}
\address{Institute of Mathematics of the Polish Academy of Sciences, Ul. S'niadeckich 8, 00-656 Warszawa, Poland}
\address{ B.Verkin Institute for Low Temperature Physics and Engineering of the National Academy of Sciences of Ukraine, 47 Nauky Ave., Kharkiv, 61103, Ukraine}
\email{adudko@impan.pl}

\author{Yongle Jiang}
\address{School of Mathematical Sciences, Dalian University of Technology, Dalian, 116024, China}
\email{yonglejiang@dlut.edu.cn}

\subjclass[2020]{Primary 20C15,  46L10; Secondary 43A35, 22D25}
\keywords{Characters, Invariant von Neumann subalgebras rigidity, Approximately finite groups, Character rigid groups}
\date{\today}

\begin{abstract} We develop a character approach to study the invariant von Neumann subalgebras rigidity property (abbreviated as the ISR property) introduced in Amrutam-Jiang's work. First,
we introduce the non-factorizable regular character property for groups and show that this implies the ISR property for any infinite ICC groups with trivial amenable radical.
Various examples are shown to have this property. Second, we apply known classification results on indecomposable characters to show approximately finite groups have the ISR property. Based on this approach, we also construct non-amenable groups with the ISR property while having non-trivial amenable radical or without the non-factorizable regular character property. 
\end{abstract}

\maketitle

\section{Introduction}
Recently, there has been a growing interest in studying the structure of $G$-invariant von Neumann subalgebras in the group von Neumann algebra $L(G)$ for a countable discrete group $G$ \cites{ab,bru,kp,aj,cds,aho,jz}. A common question studied in these works is searching for groups with a new rigidity property, \ie the so-called invariant von Neumann subalgebras rigidity property (ISR property for short), which was introduced in \cite{aj} after the first class of groups with this property appeared in an earlier work of Kalantar-Panagopoulos \cite{kp}. Recall that a countable disrecte group $G$ has the ISR property if every $G$-invariant (under the conjugation) von Neumann subalgebra in $L(G)$ is equal to $L(H)$ for some normal subgroup $H\subseteq G$. 

There are at least two reasons for us to pursue this line of research. First, maximal abelian $G$-invariant von Neumann  subalgebras are prototypes of Cartan subalgebras in $L(G)$. Hence, the study of classifying $G$-invariant von Neumann subalgebras in $L(G)$ may shed light on the famous open question of classifying Cartan subalgebras in $L(G)$ for certain classes of groups, see e.g., \cite[Problem V]{ioana_icm}, \cite[Corollary 4.4]{aj} and \cite[Corollary 2.8]{kp}. Second, $G$-invariant von Neumann subalgebras in $L(G)$ are nothing but the simplest, that is, the Dirac type invariant random von Neumann subalgebras, a new concept introduced in \cite{aho}. Note that this concept is a von Neumann algebra analogue of the so-called invariant random subgroups (IRS for short) in group theory, which is a quite hot topic and has been extensively explored; see \cites{agv,gel} and references therein. Naturally, it is interesting to ask to what extent the theory of IRS can be transferred to the study of  invariant random von Neumann subalgebras. For some initial results along this direction, see \cite{aho}.

In this paper, we develop a character approach to study invariant von Neumann subalgebras and establish the ISR property \cite{aj} for several remarkable classes of both amenable and non-amenable groups not covered in previous works \cites{kp,aj,cds,jz}. Here, by a character we mean a normalized conjugation invariant positive definite function on a countably infinite discrete group $G$. We refer the reader to the book \cite{bd_book} for an overview of the long history on the study of this concept. Quite recently,  characters play a key role in studying (local) Hilbert-Schmidt stability problem, see e.g., \cites{es,fgs,hs_1,hs_2,lv}.

There are two main aspects in our character approach. First, we introduce the new concept of non-factorizable regular character property for a countable discrete group $G$ (Definition \ref{def: NFRC}). With the aid of this new concept and the recent striking theorem due to Amrutam-Hartman-Oppelmayer \cite{aho}, we are able to establish a sufficient condition for the ISR property. 

\begin{thma}[Theorem \ref{theorem: NFRC consequences}]
Let $G$ be an ICC group. If $G$ has non-factorizable regular character property, then every $G$-invariant subfactor of $L(G)$ is equal to $L(H)$ for some normal subgroup $H$ of $G$. If $G$ is further assumed to be non-amenable with trivial amenable radical, then $G$ has the ISR property.
\end{thma}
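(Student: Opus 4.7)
The plan has two parts, matching the two assertions in the statement.

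\smallskip
\noindent\textbf{Associating a character to an invariant subfactor.} First I would attach a natural character to any $G$-invariant subfactor $M \subseteq L(G)$, using the unique trace-preserving conditional expectation $E_M : L(G) \to M$. The natural candidate is $\varphi_M(g) := \tau(E_M(u_g) u_g^*)$, which is positive definite because $E_M$ is completely positive and $\tau$ is a trace, and is conjugation-invariant because the $G$-invariance of $M$ implies $E_M(u_h x u_h^*) = u_h E_M(x) u_h^*$ for every $h \in G$, $x \in L(G)$. Thus $\varphi_M$ is a character on $G$; moreover it arises from the regular representation via the inclusion $M \subseteq L(G)$, so it should qualify as a \emph{regular} character in the sense of Definition \ref{def: NFRC}. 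By construction $\varphi_M(g) = 1$ exactly when $u_g \in M$, so if I set $H := \{g \in G : u_g \in M\}$, then $H$ is a normal subgroup (by $G$-invariance of $M$), $L(H) \subseteq M$, and $H = \{g : \varphi_M(g) = 1\}$.

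\smallskip
\noindent\textbf{Forcing $M = L(H)$.} The goal is then to show $\varphi_M = \mathbf{1}_H$, which together with the formula above for $\varphi_M$ would give $E_M(u_g) = 0$ for $g \notin H$, hence $M = L(H)$. To obtain this identity I would invoke the Amrutam--Hartman--Oppelmayer theorem \cite{aho} to interpret the $G$-invariant subfactor $M$ (a Dirac-type invariant random von Neumann subalgebra) in terms of characters, and then apply the non-factorizable regular character hypothesis: any decomposition of $\varphi_M$ inconsistent with $\mathbf{1}_H$ would contradict non-factorizability. This step is the main obstacle, because it requires matching the abstract character decomposition appearing in Definition \ref{def: NFRC} with the concrete structure produced by \cite{aho}; once that correspondence is pinned down, the conclusion $\varphi_M = \mathbf{1}_H$ is forced.

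\smallskip
\noindent\textbf{Upgrading to the ISR property.} For the second assertion, assume $G$ is non-amenable with trivial amenable radical and let $M \subseteq L(G)$ be an arbitrary $G$-invariant von Neumann subalgebra. I would show $M$ is automatically a subfactor, after which the first part yields $M = L(H)$. The centre $Z(M)$ is itself a $G$-invariant abelian von Neumann subalgebra of $L(G)$, so disintegrating $M$ over $Z(M)$ yields a $G$-equivariant measurable field of factors. Via the AHO framework, a nontrivial $Z(M)$ would produce a nontrivial amenable invariant random subgroup of $G$ supported on proper subgroups, contradicting the triviality of the amenable radical (since non-amenability of $G$ rules out the trivial IRS $\{G\}$). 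Therefore $Z(M) = \mathbb{C}$, $M$ is a subfactor, and the first part completes the proof.
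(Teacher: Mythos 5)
The setup (the character $\varphi_M(g)=\tau(E_M(u_g)u_g^*)$, the normal subgroup $H=\{g: u_g\in M\}$, and the reduction of the second assertion to the first via $\mathcal{Z}(M)$ being $G$-invariant, abelian and hence trivial by \cite{aho}) is fine and matches the paper. But the heart of the argument — your step ``Forcing $M=L(H)$'' — has a genuine gap, which you yourself flag as ``the main obstacle'' without resolving it. The non-factorizable regular character property is a statement about a \emph{pair} of characters $\phi,\psi$ satisfying $\phi(s)\psi(s)=0$ for all $s\neq e$; it says nothing about a single character, and in particular it does not forbid your $\varphi_M$ from taking values strictly between $0$ and $1$ off your $H$ (this actually happens for invariant subalgebras such as $\mathcal{Z}(L(\oplus_{\mathbb Z}A_5))\subset L(A_5\wr\mathbb Z)$, which is why the factor hypothesis matters). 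Moreover, \cite{aho} does not supply the missing structure: in the paper it is used only to kill the center in the second assertion, not to ``interpret the invariant subfactor in terms of characters.''

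What is actually needed, and what the paper does, is to invoke the argument of Chifan--Das--Sun \cite{cds}*{Theorem 3.1}: for a $G$-invariant subfactor $P$ there is a normal subgroup $H=\{g: g=a_gb_g,\ a_g\in\mathcal U(P),\ b_g\in\mathcal U(P'\cap L(G))\}$ with $L(H)=P\,\bar\otimes\,(P'\cap L(H))$. This tensor splitting produces a \emph{second} character: with $E:L(G)\to P$ and $E':L(G)\to P'\cap L(H)$ one computes $E(s)=a_s\tau(b_s)$ and $E'(s)=b_s\tau(a_s)$, hence $\phi(s)=|\tau(b_s)|^2$ and $\psi(s)=|\tau(a_s)|^2$, and since $0=\tau(s)=\tau(a_s)\tau(b_s)$ for $e\neq s\in H$ one gets $\phi(s)\psi(s)=0$ there. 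Only now can the non-factorizability hypothesis be applied (to $H$, which inherits the property as a normal subgroup of $G$ by Proposition \ref{prop: NFRC basic properties}), giving $\phi\equiv\delta_e$ (so $P=\mathbb C$) or $\psi\equiv\delta_e$ (so $P'\cap L(H)=\mathbb C$ and $P=L(H)$). Without identifying this second character coming from the relative commutant and the unitary factorization $g=a_gb_g$, the correspondence you hope to ``pin down'' does not exist, and the conclusion $\varphi_M=\mathbf 1_H$ is not forced by anything you have written.
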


Various examples of groups are shown to have the non-factorizable regular character property, including many ICC simple groups e.g., character rigid groups such as $PSL_n(\mathbb{Q})$ for $n\geq 2$ and $L(Alt)$-groups (Proposition \ref{prop: examples of NSFC-simple groups}) and certain relative ICC extensions (Proposition \ref{prop: examples of NFRC-relative ICC extension of simple groups}); Besides, we also show many charfinite groups introduced in \cite{bbhp}, e.g., $SL_3(\mathbb{Z})$, also have this non-factorizable regular character property, see  Proposition \ref{prop: SLnZ}, Proposition \ref{prop: examples of NFRC-charfinite groups_1} and Proposition \ref{prop: examples of NFRC-charfinite groups_2}, extending the work \cite{kp} with a completely different and more conceptual approach. 
In fact, we believe that this non-factorizable regular character property is actually prevalent among ICC groups if some obvious obstruction, e.g., item (2) in Proposition \ref{prop: NFRC basic properties} vanishes. It is also of independent interest, see Proposition \ref{prop: hanfeng's proof} for a connection to ergodic theory.

Actually, even for ambient groups without this non-factorizable property, we can still combine the analysis of normal subgroup structure with this property shared by certain well located subgroups to deduce the ISR property for the ambient groups, see e.g., Proposition \ref{prop: product of n groups with character rigidity has the ISR property}, Proposition \ref{prop: ISR for certain generalized wreath product groups}. 

Recall that all previous known examples of non-amenable groups with the ISR property have trivial amenable radical.
By exploring the above strategy, we show that the class of non-amenable groups with the ISR property is more diverse than what we expected.  

\begin{thma}[Theorem \ref{prop: example of nonamenable groups with ISR but nontrivial amenable radical}]
There exists a non-amenable group with both non-trivial amenable radical and the ISR property.
\end{thma}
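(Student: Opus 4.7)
The plan is to exhibit a concrete non-amenable ICC group $G$ satisfying the ISR property while having a nontrivial amenable radical, by means of a generalized wreath product. Specifically, let $H$ be one of the groups established earlier in the paper to possess the non-factorizable regular character property -- for instance the character rigid group $PSL_n(\Q)$ with $n \geq 2$ (see Proposition \ref{prop: examples of NSFC-simple groups}) -- and let $A$ be a nontrivial finite abelian group. Set $G := A \wr H = A^{(H)} \rtimes H$, with $H$ acting on $A^{(H)}$ by the left shift.

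The basic group-theoretic properties are immediate. Containing a copy of $H$, the group $G$ is non-amenable. Since $A \neq \{e\}$ and $H$ is infinite, the $H$-shift has infinite orbits on $A^{(H)} \setminus \{0\}$; combined with the ICC property of $H$, a standard conjugation computation shows that every non-identity element of $G$ has an infinite conjugacy class, so $G$ is ICC. Finally, $A^{(H)}$ is a nontrivial abelian (hence amenable) normal subgroup of $G$, so the amenable radical of $G$ is nontrivial.

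For the ISR property I would appeal to Proposition \ref{prop: ISR for certain generalized wreath product groups}, whose proof follows the character approach developed in the paper. Given a $G$-invariant von Neumann subalgebra $M \subseteq L(G)$, the strategy is to exploit the semidirect decomposition $L(G) = L(A^{(H)}) \rtimes H$: one first analyzes $M \cap L(A^{(H)})$, an $H$-invariant subalgebra of the abelian algebra $L^\infty(\widehat{A}^H)$; applying the Amrutam-Hartman-Oppelmayer theorem from \cite{aho} together with the NFRC property of $H$ forces this intersection to arise from an $H$-invariant coordinate subset, hence from a normal subgroup of $A^{(H)}$. Then the induced $H$-invariant subalgebra of the quotient is handled via Theorem \ref{theorem: NFRC consequences} applied to $H$, identifying it with a group subalgebra, and combining the two pieces yields $M = L(K)$ for some normal $K \trianglelefteq G$.

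The main obstacle will be the first step: showing that every $H$-invariant subfactor of $L^\infty(\widehat{A}^H)$ arising as $M \cap L(A^{(H)})$ comes from an $H$-invariant coordinate subset rather than from some exotic $H$-invariant factor of the Bernoulli shift. This rigidity is false for the shift action in isolation, so the argument must crucially use that the subalgebra in question is $G$-invariant (so also invariant under $A^{(H)}$ itself) and that the characters of $G$ visible in $M$ can be controlled via the NFRC property of $H$. I expect the decisive input to be a reduction to non-factorizability via a Dirac-type invariant random subalgebra argument in the sense of \cite{aho}, exactly the mechanism that drives the proof of Theorem \ref{theorem: NFRC consequences}.
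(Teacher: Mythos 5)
Your construction does not work, and the gap is fatal rather than technical. Proposition \ref{prop: ISR for certain generalized wreath product groups} is being applied with its hypotheses reversed: in that proposition the group placed in the direct sum $\oplus_I H$ must be an \emph{infinite ICC simple} group with non-factorizable regular character, while the acting group $K$ is essentially arbitrary; you instead put a nontrivial \emph{finite abelian} group $A$ in the base and the NFRC group in the acting position, so the proposition says nothing about $A\wr H$. (Note also that when the proposition does apply, its proof shows every nontrivial normal subgroup contains $\oplus_I H$, so the resulting group has \emph{trivial} amenable radical and could not serve as the desired example anyway.) Worse, the group $G=A\wr H$ genuinely fails the ISR property in general. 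Since $A^{(H)}$ is abelian, conjugation by $A^{(H)}$ acts trivially on $L(A^{(H)})$, so \emph{every} shift-invariant von Neumann subalgebra of the Bernoulli algebra $L(A^{(H)})\cong L^\infty(\widehat{A}^{H})$ is automatically $G$-invariant; the extra $G$-invariance you hope to exploit in your last paragraph carries no information, and \cite{aho} only locates amenable invariant subalgebras inside $L(\mathrm{Rad}(G))=L(A^{(H)})$ without classifying them. Concretely, for $A=\mathbb{Z}/3\mathbb{Z}$ let $\mathcal{B}_0=\mathrm{span}\{1,\chi+\chi^2\}\subset L(A)$, where $\chi$ is the canonical unitary; this is a two-dimensional unital $*$-subalgebra which is not a group subalgebra of $L(A)$. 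The infinite tensor product $\mathcal{B}=\bar{\bigotimes}_{h\in H}\mathcal{B}_0\subset L(A^{(H)})$ is invariant under the coordinate-permuting $H$-action and under the (trivial) $A^{(H)}$-action, hence is a $G$-invariant von Neumann subalgebra; but the conditional expectation onto $\mathcal{B}$ factors coordinatewise and $\chi,\chi^2\notin\mathcal{B}_0$, so $\mathcal{B}$ contains no $u_a$ with $a\neq e$ and therefore equals $L(K)$ for no subgroup $K$. This is exactly the phenomenon the paper itself exhibits in Proposition \ref{prop: abelian invariant subalgebras} for $A_5\wr\mathbb{Z}$: wreath products with finite base are a source of \emph{counterexamples} to ISR, not examples.

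The paper's actual proof takes an entirely different route: it sets $G=A_\infty\times B$, where $A_\infty$ is the finitary alternating group (infinite, simple, amenable, shown to have the ISR property by adapting the $S_\infty$ argument of \cite{jz}) and $B$ is a non-amenable group with exactly two conjugacy classes (Osin), hence character rigid with the ISR property. The amenable radical contains $A_\infty$, so it is nontrivial, and $G$ is non-amenable. The ISR property is then proved by determining the normal subgroups of $G$ (only $\{e\}$, $A_\infty$, $B$, $G$), using \cite[Theorem A]{aho} to force the center of an invariant subalgebra into $L(A_\infty)$ and hence to be trivial, invoking the decomposition $L(H)=P\bar{\otimes}(P'\cap L(H))$ from \cite{cds}, the Ge--Kadison splitting theorem, and finally a vanishing-character argument (Lemma \ref{lemma: vanishing character from many conjugates}) exploiting that all nontrivial elements of $B$ are conjugate. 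If you want to salvage a proof along your lines, you would need an amenable normal piece that is itself an infinite simple ICC group with the ISR property (as $A_\infty$ is), not a finite abelian base of a wreath product.
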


The other aspect in this character approach is that we combine techniques in \cites{aj,jz} and classification work on indecomposable characters in \cites{dm_ggd2014, dm_manuscript,thoma_mz} to establish the ISR property for a wide class of amenable groups with dynamical origin, greatly extending the class of amenable groups with the ISR property.  Indeed, amenable groups with the ISR property was not known until the quite recent work \cite{jz}.

\begin{thma}[Theorem \ref{thm: approximately finite groups have the ISR property}]
Full groups of simple Bratteli diagrams (known also as approximately finite groups) have the ISR property.
\end{thma}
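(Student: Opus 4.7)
The plan is to use the classification of indecomposable characters of approximately finite groups, due to Dudko--Medynets \cites{dm_ggd2014,dm_manuscript} and originally Thoma \cite{thoma_mz} in the symmetric-group case, combined with the character-theoretic reduction developed in \cites{aj,jz} to pass from an invariant von Neumann subalgebra to a character.

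Let $G$ be the full group of a simple Bratteli diagram and let $M \subseteq L(G)$ be a $G$-invariant von Neumann subalgebra. Set $H := \{g \in G : u_g \in M\}$. Conjugation invariance makes $H$ a normal subgroup of $G$, and by construction $L(H) \subseteq M$, so the task is to prove the reverse inclusion $M \subseteq L(H)$. Suppose, for contradiction, that some $x \in M$ has nonzero Fourier coefficient at an element $g_0 \in G \setminus H$, i.e., $\tau(x u_{g_0}^*) \neq 0$.

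Since $G$ is amenable (being locally finite), I would use F{\o}lner averaging of the $G$-conjugation action on positive-definite functions built from $x$ and $E_M$ to produce a character $\chi$ of $G$ whose restriction to the conjugacy class of $g_0$ is nonzero. This averaging step and its compatibility with the $M$-structure follow the strategy of \cite{jz}, adapted to the present combinatorial setting; the output $\chi$ retains the additional algebraic constraint of coming from a subalgebra rather than a generic positive definite function. By the Choquet decomposition of characters, we obtain an indecomposable character $\chi_0$ of $G$ with $\chi_0(g_0) \neq 0$.

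Now the Dudko--Medynets classification explicitly parametrizes the indecomposable characters of $G$ via combinatorial data attached to the Bratteli diagram (generalizing the Thoma parameters). I would analyze the algebraic constraints imposed on the primary representation $\pi_{\chi_0}$ by the existence of $M$ --- essentially, $M$ induces a $G$-equivariant trace-preserving conditional expectation that must respect the factor decomposition coming from $\chi_0$. Using the explicit formulas for $\chi_0$ and the normal-subgroup lattice determined by the Bratteli stratification, I would conclude that $\chi_0(g_0) \neq 0$ forces $u_{g_0} \in M$, contradicting $g_0 \notin H$.

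The main difficulty will be the final step: matching the explicit Dudko--Medynets parametrization to the algebraic constraints carried by $M$, particularly for indecomposable characters $\chi_0$ supported on intermediate normal subgroups strictly between $H$ and $G$. Such characters do not vanish at $g_0$ yet need not directly produce the unitary $u_{g_0}$ inside $M$, so pinning down the contradiction requires a careful case analysis following the hierarchy of normal subgroups cut out by the Bratteli diagram, together with a refined averaging inside $M$ to extract $u_{g_0}$ from the representation-theoretic data.
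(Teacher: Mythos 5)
Your outline stops exactly where the real work begins, so there is a genuine gap. Producing a character $\phi(g)=\tau(E(g)g^{-1})$ from the conditional expectation costs nothing (no F\o lner averaging is needed: $G$-invariance of $M$ already makes $\phi$ conjugation invariant, cf.\ the use of \cite{jz} in Section \ref{sec: OC property}), and nonvanishing of $\phi$, or of some indecomposable component $\chi_0$, at $g_0$ only tells you $E(u_{g_0})\neq 0$. The statement you actually need is $E(u_{g_0})\in\C u_{g_0}$ (Proposition \ref{prop: criterion on P=L(H)}), and nothing in the character formalism by itself delivers this; a character is a single scalar invariant of $E$ and forgets the bimodule/multiplicative structure of $E$ that forces $E(u_{g_0})$ to be a scalar multiple of $u_{g_0}$. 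You acknowledge this yourself (``Such characters do not vanish at $g_0$ yet need not directly produce the unitary $u_{g_0}$ inside $M$''), but the ``careful case analysis'' you defer is precisely the theorem; as written the proposal establishes neither inclusion beyond the trivial $L(H)\subseteq M$. A second problem is that you lean on the full classification of indecomposable characters of $G_B$ for arbitrary simple Bratteli diagrams; the published classification \cite{dm_jfa2013} only covers finitely many ergodic invariant measures, the general case is Theorem \ref{theorem: characters AF groups} from a manuscript in preparation, and the paper explicitly states that it does \emph{not} use this result in the proof.

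The paper's actual route avoids both issues and is much more elementary. It first localizes supports: $E(g)\in L(\fpc(g))$, and the combinatorial Lemmas \ref{lemma: fpc preserves orbits}, \ref{lemma: fpc G_n}, \ref{lemma: fpc G_A} give $\fpc(g)\subseteq G_n\cap G_{\suppg(g)}$. This pins down $\fpc$ of a product of disjoint transpositions so completely that an induction (Lemma \ref{lemma: E(permutations)}) shows $E(g)\in\{0,g\}$ for every involution $g$, using only $\tau$-preservation and $E\circ E=E$. Then Lemma \ref{lemma: P contains G'} gives the dichotomy $P=\C$ or $P\supseteq L(G_B')$, where the only character-theoretic input is the elementary vanishing estimate Lemma \ref{lemma: vanishing character from many conjugates}, not any classification. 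Finally, writing an arbitrary $g\in G_n=\prod_v G_v$ as a product of disjoint transpositions times a commutator element and invoking Proposition \ref{prop: criterion on P=L(H)} finishes the proof. If you want to salvage your approach, the missing ingredient is exactly an argument of this support-and-multiplicativity type; the Thoma-style parameters alone will not extract $u_{g_0}$ from $M$.
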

\noindent
We expect the proof strategy used in Section \ref{sec:approximately finite} could also be adapted to deal with other class of groups with dynamical origin, see the discussion in Section \ref{sec: more question}.

To summarize, we give an overview of the classes of groups that are shown to have the ISR property or the weaker property that every invariant von Neumann subfactor of $L(G)$ is a subgroup subfactor based on our character approach.
\begin{itemize}
\item{} Groups with the non-factorizable regular character property, e.g.,
\begin{itemize}
\item Several classes of infinite ICC groups arising from simple groups (Proposition \ref{prop: examples of NSFC-simple groups}); this includes $a)$ character rigid groups and $b)$ $L(Alt)$-groups;
\item{} Many charfinite groups (Section \ref{sec: OC property}, Proposition \ref{prop: NFRC basic properties}); this includes $SL_n(\mathbb{Z})$ for odd $n\geqslant 3$ (Proposition 
\ref{prop: SLnZ}).
\end{itemize}
\item{} Approximately finite groups (Section \ref{sec:approximately finite});
\item{} Certain infinite groups without the non-factorizable regular character property, including 
\begin{itemize}
\item Two classes of groups with nice normal subgroup structures, \ie direct product of non-amenable chracter rigid groups (Proposition \ref{prop: product of n groups with character rigidity has the ISR property}); and  certain generalized wreath product groups (Proposition \ref{prop: ISR for certain generalized wreath product groups}).
\item An infinite group with non-trivial amenable radical (Theorem \ref{prop: example of nonamenable groups with ISR but nontrivial amenable radical}).
\end{itemize}
\end{itemize}

This paper is organized as follows. We recall some basic concepts on characters, group von Neumann algebras and the ISR property in Section \ref{sec: preliminaries}. Section \ref{sec: OC property} is devoted to a detailed study of the non-factorizable regular character property, including its definition, relation to the ISR property, \ie Theorem \ref{theorem: NFRC consequences} and various examples with this property.   
 Then we prove approximately finite groups have the ISR property in Section \ref{sec:approximately finite}. 
We also construct  non-amenable groups with the ISR property but without the non-factorizable regular character property in Subsection \ref{subsection: ISR groups without the factorizable property}   or having non-trivial amenable radical in Subsection \ref{subsection: non-trivial amenable radical}. Finally, we collect some questions arising from this study in Section \ref{sec: more question}.

\paragraph{\textbf{Acknowledgements}} 

A. D. acknowledges the funding by Long-term program of support of the Ukrainian
research teams at the Polish Academy of Sciences carried out in collaboration with the
U.S. National Academy of Sciences with the financial support of external partners. 

Y. J. is partially supported by National Natural Science Foundation of China (Grant No. 12471118). The authors thank Dr. Tattwamasi Amrutam and Prof. Adam Skalski for several helpful discussion.

\section{Preliminaries}
\label{sec: preliminaries}

\subsection{Characters}

In this subsection, we record some basic facts about the characters. More detailed treatment can be found in \cite{bd_book}.

\begin{definition}
Let $G$ be a countable group. A {\it character} on $G$ is a function $\phi: G\rightarrow \mathbb{C}$ such that
\begin{itemize}
\item[(1)] $\phi$ is \emph{positive definite}, \ie \[\sum_{i=1}^n\sum_{j=1}^n\overline{\alpha_i}\alpha_j\phi(g_i^{-1}g_j)\geq 0\]
for all $n\in\mathbb{N}$ and any elements $g_1,\ldots, g_n\in G$ and any $\alpha_1,\ldots, \alpha_n\in\mathbb{C}$,
\item[(2)] $\phi$ is \emph{conjugation-invariant}, \ie $\phi(hgh^{-1})=\phi(g)$ for all $g, h\in G$, and
\item[(3)] $\phi$ is \emph{normalized}, \ie $\phi(e)=1$.
\end{itemize} 
We say that a character $\phi$ on $G$ is {\it indecomposible} or {\it extremal} if it cannot be written in the form $\phi=\alpha\phi_1+(1-\alpha)\phi_2$, where $0<\alpha<1$ and $\phi_1,\phi_2$ are distinct characters.
\end{definition}

Note that the space of characters on $G$ endowed with the topology of pointwise convergence makes it a compact convex subset of $\ell^{\infty}(G)$. Then indecomposable characters are just the extreme points of the space of characters.

\subsection{Group von Neumann algebras and the ISR property}

In this subsection, we recall some basic facts about group von Neumann algebras \cite{ap_book} and the ISR property as introduced in \cite{aj}.

\begin{definition}
Let $G$ be a countable group. Then {\it the group von Neumann algebra generated by $G$}, denoted by $L(G)$, is defined as the closure of the linear span of all operators $u_g$, $g\in G$, in $B(\ell^2(G))$ under the weak operator topology, where $u_g\in\mathcal{U}(\ell^2(G))$ is the unitary operator defined by $u_g(\delta_s)=\delta_{gs}$ for all $s\in G$. 
\end{definition}

Recall that \emph{the canonical trace} $\tau$ on $L(G)$ is defined by $\tau(x)=\langle x\delta_e,\delta_e\rangle$ for any $x\in L(G)$. Moreover, $\tau$ is faithful, and hence the map $L(G)\ni x\mapsto x\delta_e\in \ell^2(G)$ is injective. 
For any $x\in L(G)$, we may write $x=\sum_{g\in G}c_gu_g$ for its \emph{Fourier expansion}, \ie $x\delta_e=\sum_{g\in G}c_g\delta_g$ for some $(c_g)_{g\in G}\in\ell^2(G)$.
Set $\text{supp}(x)=\{g\in G: c_g\neq 0\}$ for its \emph{support}. We often simply write $g$ for $u_g$ if no confusion arises.

Note that for any subgroup $H$ of $G$, we could identify $L(H)$ as a von Neumann subalgebra of $L(G)$, \ie $L(H)=\{a\in L(G):~\text{supp}(x)\subseteq H\}$. If $H$ is a normal subgroup, then clearly $L(H)$ is $G$-invariant in the sense that $u_gL(H)u_g^*=L(H)$ for all $g\in G$. Surprisingly, it was proved in \cite{kp} that for certain higher rank lattice subgroups, e.g., $G=SL_3(\mathbb{Z})$, there are no other $G$-invariant von Neumann subalgebras in $L(G)$. To record this rigidity feature, together with Amrutam, we introduced the so-called ISR property in \cite{aj}. 
\begin{definition}
Let $G$ be a countable group. Then we say $G$ has the {\it invariant von Neumann subalgebras rigidity property}, abbreviated as the {\it ISR property}, if for any $G$-invariant von Neumann subalgebra $P\subseteq L(G)$, then there is some normal subgroup $H\lhd G$ such that $P=L(H)$.
\end{definition}

Let $P\subseteq L(G)$ be a $G$-invariant von Neumann subalgebra. Denote by $E: (L(G),\tau)\rightarrow P$ the trace $\tau$-preserving conditional expectation onto $P$, for its definition and basic properties, see \cite[\S 9.1]{ap_book}. We record some properties of $E$ needed for the study of the ISR property.
\begin{proposition}\label{prop: basic properties of c.e. related to the ISR property}
The following hold true.
\begin{itemize}
    \item[(1)] $\tau(E(g)s)=\tau(E(g)E(s))=\tau(gE(s))$ for all $s, g\in G$.
    \item[(2)] $sE(g)s^{-1}=E(sgs^{-1})$ for all $s, g\in G$.
\end{itemize}
\end{proposition}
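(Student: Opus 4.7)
Both parts of the proposition are standard consequences of the fact that $E$ is the unique $\tau$-preserving conditional expectation onto $P$, combined with the $G$-invariance of $P$, so I would expect the proof to be a few lines rather than a conceptual novelty.

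For part~(1), the plan is to use the $P$-bimodule property $E(axb)=aE(x)b$ for $a,b\in P$ together with $\tau\circ E=\tau$. Since $E(g)\in P$, we have
\[
\tau(E(g)s)=\tau\bigl(E(E(g)s)\bigr)=\tau\bigl(E(g)E(s)\bigr),
\]
and symmetrically, since $E(s)\in P$,
\[
\tau(gE(s))=\tau\bigl(E(gE(s))\bigr)=\tau\bigl(E(g)E(s)\bigr),
\]
which gives all three equalities at once.

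For part~(2), the key ingredient is $G$-invariance, i.e.\ $sPs^{-1}=P$ for every $s\in G$. I would introduce the auxiliary map $\Phi_s:L(G)\to L(G)$ defined by $\Phi_s(x)=sE(s^{-1}xs)s^{-1}$ and verify that $\Phi_s$ is itself a $\tau$-preserving conditional expectation onto $P$: it lands in $sPs^{-1}=P$, it is idempotent and $P$-bilinear (because $E$ has these properties with respect to the conjugated subalgebra, which equals $P$), and it preserves $\tau$ because $\tau$ is invariant under conjugation by the unitaries $u_s$ and $\tau\circ E=\tau$. Uniqueness of the $\tau$-preserving conditional expectation then forces $\Phi_s=E$, and substituting $x=sgs^{-1}$ yields $sE(g)s^{-1}=E(sgs^{-1})$.

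There is essentially no obstacle here; the only points requiring a moment of care are the verification that $\Phi_s$ actually takes values in $P$ (which is precisely where $G$-invariance of $P$ enters) and the trace-preservation check. Everything else is a routine appeal to the standard theory of conditional expectations on finite von Neumann algebras as recalled in \cite[\S 9.1]{ap_book}.
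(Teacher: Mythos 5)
Your part~(1) is exactly the paper's argument: apply $\tau\circ E=\tau$ and then $P$-bimodularity twice, once from each side. For part~(2) you take a genuinely different, though equally standard, route. The paper works directly with the defining $L^2$-characterization of $E$ as the orthogonal projection onto $L^2(P,\tau)$: it verifies $\langle sgs^{-1}-sE(g)s^{-1},y\rangle=0$ for all $y\in P$ by moving $s^{\pm 1}$ around under the trace and using that $s^{-1}ys\in P$ by $G$-invariance. You instead build the auxiliary map $\Phi_s(x)=sE(s^{-1}xs)s^{-1}$, check it is a $\tau$-preserving conditional expectation onto $P$ (the range lands in $sPs^{-1}=P$, the bimodularity check uses $s^{-1}p_is\in P$, and trace-preservation follows from traciality of $\tau$ and $\tau\circ E=\tau$), and then invoke uniqueness of the trace-preserving conditional expectation, which the paper's reference \cite[\S 9.1]{ap_book} indeed supplies. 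Both proofs hinge on the same two inputs ($G$-invariance of $P$ and the characterization/uniqueness of $E$); the paper's version is a shorter direct computation, while yours packages the conjugation-equivariance into a statement about uniqueness that is arguably more conceptual and reusable. One small point of care in your sketch: when you justify idempotence and bimodularity of $\Phi_s$ you should say explicitly that $s^{-1}Ps=P$, so that $E(s^{-1}ps)=s^{-1}ps$ for $p\in P$ — as phrased ("$E$ has these properties with respect to the conjugated subalgebra") it reads slightly loosely, since $E$ is the expectation onto $P$ itself, but the verification goes through precisely because the conjugated subalgebra coincides with $P$.
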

\begin{proof}
(1) 
\begin{align*}
&\tau(E(g)s)\\
&=\tau(E(E(g)s))~(\text{since $E$ is $\tau$-preserving, \ie $\tau\circ E=\tau$})\\
&=\tau(E(g)E(s))~(\text{since $E$ is $P$-bimodular, \ie $E(p_1xp_2)=p_1E(x)p_2$, $\forall p_1, p_2\in P$, $\forall x\in L(G)$})\\
&=\tau(E(gE(s)))~(\text{since $E$ is $P$-bimodular})\\
&=\tau(gE(s)).~(\text{since $E$ is $\tau$-preserving})
\end{align*}
(2) By the definition of $E$ (see the proof of Theorem 9.1.2 in \cite{ap_book}), we know that for any $x\in L(G)$, $E(x)$ is the unique element in $L(G)$ such that $\langle x-E(x), y\rangle=0$ for all $y\in P$, where $\langle a, b\rangle:=\tau(b^*a)$. In other words, $E(x)$ is the image of $x\in L(G)\subseteq \ell^2(G)$ under the orthogonal projection $\ell^2(G)\twoheadrightarrow L^2(P,\tau)$. 

To prove (2), it suffices to check that $\langle sgs^{-1}-sE(g)s^{-1}, y\rangle=0$ for all $y\in P$. Indeed, 
\begin{align*}
\langle sgs^{-1}-sE(g)s^{-1}, y\rangle&=\tau(y^*(sgs^{-1}-sE(g)s^{-1}))\\
&=\tau(s^{-1}y^*s(g-E(g)))\\
&=\langle g-E(g), s^{-1}ys\rangle=0,
\end{align*}
where to get the last equality, we have used the fact that $P$ is $G$-invariant and hence $s^{-1}ys\in P$, thus the definition of $E(g)$ entails the equality.
\end{proof}

Another fact we will use is the following simple lemma to control relative commutant, which can be traced back to Dixmier's work on MASAs (maximal abelian von Neumann subalgebras), see e.g., \cite{ss_book}. In fact, its proof is a natural generalization of the well-known fact that $G$ is ICC iff $L(G)$ is a factor.

\begin{proposition}\label{prop: control supports for elements in relative commutants}
Let $H\subseteq G$ be countable discrete groups and $x\in L(H)'\cap L(G)$ be any element. Then $ \textrm{supp}(x)\subseteq \{g\in G: \sharp\{tgt^{-1}: t\in H\}<\infty\}$.
 \end{proposition}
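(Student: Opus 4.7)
The plan is to argue via the Fourier expansion of $x$ and exploit the fact that the coefficient function lies in $\ell^2(G)$. Writing $x=\sum_{g\in G}c_g u_g$ with $(c_g)\in \ell^2(G)$, the hypothesis $x\in L(H)'$ means that $u_t x u_t^{-1}=x$ for every $t\in H$. I would expand the left-hand side as $\sum_{g\in G}c_g u_{tgt^{-1}}$ and then reindex by $g\mapsto t^{-1}gt$ to rewrite it as $\sum_{g\in G}c_{t^{-1}gt}u_g$. Comparing Fourier coefficients (which is legal because $x\mapsto x\delta_e\in \ell^2(G)$ is injective, as noted in the preliminaries) yields the key identity
\begin{equation*}
c_{tgt^{-1}}=c_g\qquad \text{for all } g\in G,\ t\in H.
\end{equation*}
In other words, the Fourier coefficient function $g\mapsto c_g$ is constant on $H$-conjugacy classes.

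Now suppose $g\in \suppe(x)$, i.e.\ $c_g\neq 0$. Then for every element $g'$ in the $H$-conjugacy class $\{tgt^{-1}: t\in H\}$ we have $c_{g'}=c_g$, a fixed nonzero complex number. Since $(c_{g'})_{g'\in G}\in \ell^2(G)$, this conjugacy class must be finite, otherwise $\sum_{g'}|c_{g'}|^2$ would diverge. This places $g$ in the set $\{g\in G: \sharp\{tgt^{-1}:t\in H\}<\infty\}$, which is exactly what we want to prove.

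The argument is essentially a one-line $\ell^2$ observation once the commutation relation is translated into an invariance of Fourier coefficients, so I do not anticipate a serious obstacle; the only point requiring care is the legitimacy of expanding $u_t x u_t^{-1}$ coefficient-wise and reindexing, but this follows from the canonical embedding $L(G)\hookrightarrow \ell^2(G)$ via $x\mapsto x\delta_e$ and the fact that left/right multiplication by $u_t$ acts by the corresponding permutation of $\ell^2(G)$. This mirrors the classical proof that $L(G)$ is a factor precisely when $G$ is ICC (take $H=G$), which the authors allude to just before the statement.
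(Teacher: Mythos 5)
Your proposal is correct and follows essentially the same argument as the paper: both translate $x\in L(H)'$ into the invariance $c_{tgt^{-1}}=c_g$ of the Fourier coefficients under $H$-conjugation, and then use square-summability of $(c_g)$ to conclude that a nonzero coefficient forces the $H$-conjugacy class to be finite. The only cosmetic difference is that the paper phrases the last step contrapositively (an infinite class forces $c_g=0$), which is the same estimate.
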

\begin{proof}
Let $x=\sum_{g\in G}c_gu_g$ be the Fourier expansion of $x$. 

Since $x\in L(H)'$, we deduce that $u_txu_t^*=x$ for all $t\in H$. By the uniqueness of Fourier expansion, this is equivalent to $c_g=c_{tgt^{-1}}$ for all $g\in G$ and $t\in H$.

Assume that $\sharp\{tgt^{-1}: t\in H\}=\infty$ for some $g\in G$. Set $I_g=\{tgt^{-1}: t\in H\}$. 
Then \begin{align*}
    \infty>\sum_{s\in G}|c_s|^2\geq \sum_{s\in I_g}|c_s|^2=\sum_{s\in I_g}|c_g|^2=\sharp I_g\cdot |c_g|^2.
\end{align*}
Thus $c_g=0$ since $\sharp I_g=\infty$.
\end{proof}

To establish some group $G$ have the ISR property, we have the following simple observation, which was already used in \cite{aj}.
\begin{proposition}\label{prop: criterion on P=L(H)}
Let $P\subseteq L(G)$ be a $G$-invariant von Neumann subalgebra. Then $P=L(H)$ for some subgroup $H\lhd G$ iff $E(u_g)\in\mathbb{C}u_g$ for all $g\in G$, where $E: L(G)\rightarrow P$ denotes the unique trace $\tau$-preserving conditional expectation on $P$.
\end{proposition}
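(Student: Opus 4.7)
The plan is to handle the two directions separately: the forward implication is a direct bookkeeping calculation, while the reverse implication requires us to extract a normal subgroup from the data of the conditional expectation.

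For the forward direction, if $P=L(H)$ for some normal subgroup $H\lhd G$, then under the identification $L(G)\subseteq \ell^2(G)$ via $x\mapsto x\delta_e$, $E$ is realized as the orthogonal projection onto the closed span of $\{\delta_h: h\in H\}$. Consequently $E(u_g)=u_g$ when $g\in H$ and $E(u_g)=0$ otherwise, so $E(u_g)\in\mathbb{C}u_g$ in either case.

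For the reverse direction, write $E(u_g)=\alpha_g u_g$ with $\alpha_g\in\mathbb{C}$. The key observation is that since $E(u_g)\in P$ and $E$ is idempotent,
\[\alpha_g u_g = E(u_g) = E(E(u_g)) = \alpha_g E(u_g) = \alpha_g^2 u_g,\]
forcing $\alpha_g\in\{0,1\}$. I would then set $H:=\{g\in G: \alpha_g=1\} = \{g\in G: u_g\in P\}$ and verify that $H$ is a subgroup: it contains $e$, is closed under inverses (since $u_{g^{-1}}=u_g^*\in P$), and is closed under products (since $P$ is an algebra). Normality of $H$ then follows from the $G$-invariance of $P$: for $s\in G$ and $g\in H$, $u_{sgs^{-1}}=u_s u_g u_s^*\in u_sPu_s^*=P$, so $sgs^{-1}\in H$.

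To conclude $P=L(H)$, the inclusion $L(H)\subseteq P$ is immediate from the definition of $H$. For $P\subseteq L(H)$, take $x\in P$ with Fourier expansion $x=\sum_g c_g u_g$, so $c_g=\tau(x u_g^*)$. Using item (1) of Proposition \ref{prop: basic properties of c.e. related to the ISR property}, extended by linearity and $\ell^2$-continuity to the identity $\tau(E(a)b)=\tau(aE(b))$ for all $a,b\in L(G)$, together with $x=E(x)$, I compute
\[c_g = \tau(E(x)u_g^*) = \tau(x\,E(u_g^*)) = \alpha_{g^{-1}}\,c_g,\]
since $E(u_g^*)=E(u_{g^{-1}})=\alpha_{g^{-1}} u_g^*$. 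Hence $c_g=0$ whenever $g^{-1}\notin H$, equivalently $g\notin H$, so $\mathrm{supp}(x)\subseteq H$ and $x\in L(H)$. The only mild subtlety is this extension of the adjoint identity from group elements to general operators in $L(G)$, but this is a routine density argument; no substantial obstacle arises, as the whole proof is an orchestration of standard properties of $E$ and $\tau$.
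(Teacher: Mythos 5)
Your proof is correct and follows essentially the same route as the paper: define $H$ from the values $E(u_g)$, verify it is a normal subgroup using $G$-invariance, and compute Fourier coefficients to obtain $P\subseteq L(H)$. Your upfront use of idempotency to force the scalars into $\{0,1\}$ (so $H=\{g:u_g\in P\}$ and the subgroup check becomes purely algebraic) is a mild streamlining of the paper's argument, which instead uses $P$-bimodularity to get $E(u_{st})=u_{st}$ and positivity for inverses; also, your density extension of the identity $\tau(E(a)b)=\tau(aE(b))$ is not needed, since for $x\in P$ one gets $\tau(xu_g^{-1})=\tau(E(xu_g^{-1}))=\tau(xE(u_g^{-1}))$ directly from trace-preservation and bimodularity, as the paper does.
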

\begin{proof}
The ``only if" direction: this is clear since $E(u_g)=u_g$ or $0$ depending on whether $g\in H$ or not.

The ``if" direction: set $H=\{g\in G: E(u_g)\neq 0\}$.
We aim to check that $H\lhd G$ and $P=L(H)$.

First, we show $H$ is a subgroup.
$E(u_e)=E(id)=id\neq 0$, hence $e\in H$. Moreover, $E(u_s)\neq 0\Rightarrow E(u_{s^{-1}})=E(u_s^*)=E(u_s)^*\neq 0$, where to get the last equality, we have used the fact that $E$ is a positive map.

Assume that $s, t\in H$, \ie $E(u_s)\neq 0$ and $E(u_t)\neq 0$. By our assumption, $E(u_s)=c_su_s$ and $E(u_t)=c_tu_t$ for two non-zero numbers $c_s, c_t$. Thus, $E(u_{st})=E(u_su_t)=E(\frac{E(u_s)}{c_s}\frac{E(u_t)}{c_t})=\frac{1}{c_sc_t}E(E(u_s)E(u_t))=\frac{1}{c_sc_t}E(u_s)E(u_t)=u_su_t=u_{st}$. Hence $st\in H$.

Clearly, the subgroup $H$ is normal since $P$ is $G$-invariant.
We are left to show $P=L(H)$.

By the definition of $H$, we have that for any $h\in H$, $u_h\in\mathbb{C}E(u_h)\subseteq P$. Hence $L(H)\subseteq P$ since $L(H)$ is generated by $H$.
Next, given any $x\in P$, write $x=\sum_{g\in G}c_gu_g$ for its Fourier expansion. Note that $L(H)=\{x\in L(G): \text{supp}(x)\subseteq H\}$. Thus to show $x\in L(H)$, it suffices to check that
$\forall g\not\in H$, we have $g\not\in\text{supp}(x)$, \ie $\tau(xu_{g}^{-1})=0$.
Indeed,
$\tau(xu_g^{-1})=\tau(E(xu_{g}^{-1}))=\tau(xE(u_g^{-1}))=\tau(xE(u_g)^*)=0$ since $E(u_g)=0$ by the definition of $H$ and $g\not\in H$.
\end{proof}

While studying invariant subalgebras rigidity using characters we will also use the following statement.
\begin{lemma}\label{lemma: vanishing character from many conjugates} Let $G$ be a countable group and $\chi$ be a character on $G$. Assume that $g\in G$ such that for any $n\in\mathbb N$ there exist elements $g_1,\ldots, g_n\in G$ conjugated to $g$ with $|\chi(g_ig_j^{-1})|\leqslant\epsilon$ for every $i<j$. Then $|\chi(g)|\leqslant \epsilon ^{1/2}$.
\end{lemma}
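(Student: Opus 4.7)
The plan is to apply the GNS construction associated to the character $\chi$ and then estimate the norm of a suitable average of vectors via Cauchy--Schwarz.

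First, I would invoke the GNS construction: since $\chi$ is a normalized positive definite function on $G$, there exist a Hilbert space $\mathcal{H}$, a unitary representation $\pi \colon G \to \mathcal{U}(\mathcal{H})$, and a cyclic unit vector $\xi \in \mathcal{H}$ (unit since $\|\xi\|^2 = \chi(e) = 1$) such that $\chi(h) = \langle \pi(h)\xi, \xi\rangle$ for every $h \in G$. Because $\chi$ is class-invariant, $\chi(g_j^{-1} g_i) = \chi(g_i g_j^{-1})$ (the two elements being conjugate via $g_j$), so the hypothesis gives $|\langle \pi(g_i)\xi, \pi(g_j)\xi\rangle| = |\chi(g_j^{-1}g_i)| \leq \epsilon$ for all $i<j$, and by taking complex conjugates the same bound holds for $i > j$.

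Next, for each $n$, choose $g_1,\dots,g_n$ as in the hypothesis and set
\[
S_n := \frac{1}{n}\sum_{i=1}^n \pi(g_i)\xi \in \mathcal{H}.
\]
Expanding the squared norm,
\[
\|S_n\|^2 = \frac{1}{n^2}\sum_{i,j=1}^n \langle \pi(g_i)\xi, \pi(g_j)\xi\rangle = \frac{1}{n} + \frac{1}{n^2}\sum_{i\neq j}\chi(g_j^{-1}g_i),
\]
so $\|S_n\|^2 \leq \frac{1}{n} + \frac{n(n-1)}{n^2}\,\epsilon \leq \frac{1}{n} + \epsilon$.

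On the other hand, conjugation-invariance of $\chi$ gives $\chi(g_i) = \chi(g)$ for each $i$, so
\[
\chi(g) = \frac{1}{n}\sum_{i=1}^n \chi(g_i) = \frac{1}{n}\sum_{i=1}^n \langle \pi(g_i)\xi, \xi\rangle = \langle S_n, \xi\rangle.
\]
Cauchy--Schwarz yields $|\chi(g)| \leq \|S_n\|\,\|\xi\| \leq \sqrt{\tfrac{1}{n}+\epsilon}$. Letting $n \to \infty$ gives $|\chi(g)| \leq \epsilon^{1/2}$, as desired. No step here looks like a real obstacle; the only point worth stating carefully is the swap from $\chi(g_ig_j^{-1})$ in the hypothesis to $\chi(g_j^{-1}g_i)$ in the Gram matrix, which is free from the class-invariance of $\chi$.
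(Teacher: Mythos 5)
Your proof is correct and is essentially the same as the paper's: both pass to the GNS representation, bound the norm of the (averaged) sum $\sum_i \pi(g_i)\xi$ using the hypothesis on $|\chi(g_ig_j^{-1})|$, identify $\chi(g)$ with the pairing of that sum against $\xi$ via conjugation-invariance, and conclude by Cauchy--Schwarz and letting $n\to\infty$. The extra care you take with $\chi(g_j^{-1}g_i)$ versus $\chi(g_ig_j^{-1})$ and with the $i>j$ terms is a correct (and implicit in the paper) detail, not a deviation.
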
 \noindent The proof can be easily derived from the proof of Proposition 2.7 in \cite{dm_ggd2014}.
\begin{proof} Denote by $(\pi,\xi)$ the GNS construction associated to $\chi$. Let $\xi_i=\pi(g_i)\xi$, $i=1,\ldots,n$. Then
$$\|\xi_1+\ldots+\xi_n\|^2=n+\sum\limits_{i\neq j}\chi(g_ig_j^{-1})\leqslant n+n(n-1)\epsilon.$$ It follows that
$$|\chi(g)|=\tfrac{1}{n}|\langle\sum\limits_{i=1}^n\xi_i,\xi\rangle|\leqslant\tfrac{1}{n}(n+n(n-1)\epsilon)^{1/2}.$$ Taking limit when $n\to\infty$ we obtain the desired estimate.
\end{proof}

\section{The non-factorizable regular character property}\label{sec: OC property}
In this section, we introduce the concept of non-factorizable regular character property for countable discrete groups and undertake a detailed study on it, including its basic properties and various examples. 

Recall that in \cite{jz}, the finitary permutation group $S_{\infty}$ on $\mathbb{N}$ was shown to have the ISR property, which is the first known infinite amenable group with this property. The key observation used in the proof is that for any character $\phi$ on $S_{\infty}$, we have $\phi((1~2~3))>0$ unless $\phi\equiv \delta_e$, where $(1~2~3)$ denotes the 3-cycle on $1,2,3$. This observation is recorded as Corollary 2.4 in \cite{jz} and follows from  Thoma's celebrated theorem on classification of indecomposable characters on $S_{\infty}$ \cite{thoma_mz} (see also \cite{KerovVershik-AsymptoticTheorey-81} and \cite{Okunkov-CharactersSymmetric-97}). Motivated by this key property that $S_{\infty}$ has, we introduce the  following concept for all countable groups.

 \begin{definition}\label{def: NFRC}
Let $G$ be a countable group. We say the regular character of $G$ is \emph{non-factorizable} (or that $G$ has {\it non-factorizable regular character}) if for any characters $\phi,\psi$ on $G$ such that  $\phi(s)\psi(s)=0$ for all $e\neq s\in G$ one has either $\phi\equiv \delta_e$ or $\psi\equiv \delta_e$, where $\delta_e$ denotes the Dirac function on the identity element $e$ in $G$.
\end{definition}
Let us record several basic properties of groups having non-factorizable regular characters.
\begin{proposition}\label{prop: NFRC basic properties}
If $G$ has non-factorizable regular character, then
\begin{itemize}
\item[(1)] every non-trivial normal subgroup of $G$ also has non-factorizable regular character.
\item[(2)] $G$ contains no non-trivial (commuting) normal subgroups $H$ and $K$ with trivial intersection.
\end{itemize}
\end{proposition}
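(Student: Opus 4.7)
The plan is to deduce both statements by exhibiting an explicit pair of characters on $G$ whose product vanishes off $e$, thereby contradicting the non-factorizable regular character hypothesis.

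For (2), I would suppose towards contradiction that $H,K\lhd G$ are both non-trivial with $H\cap K=\{e\}$ and consider $\Phi=\mathbf{1}_H$ and $\Psi=\mathbf{1}_K$, the characteristic functions viewed as maps $G\to\{0,1\}$. Each is a character on $G$: normality gives conjugation invariance; $\mathbf{1}_H(e)=1$; and positive definiteness holds because $\mathbf{1}_H$ is the pull-back of the regular character $\delta_{[e]}$ on $G/H$ via the quotient map $G\twoheadrightarrow G/H$ (equivalently, a matrix coefficient of the left regular representation of $G/H$), and similarly for $K$. Since $H\cap K=\{e\}$, we have $\Phi\Psi=\mathbf{1}_{H\cap K}=\delta_e$, so $\Phi(g)\Psi(g)=0$ for all $g\neq e$. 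The non-factorizability hypothesis on $G$ then forces $\Phi=\delta_e$ or $\Psi=\delta_e$, i.e.\ $H=\{e\}$ or $K=\{e\}$, contradicting non-triviality. The parenthetical ``commuting'' condition is automatic here: $[h,k]\in H\cap K=\{e\}$ whenever $h\in H,\,k\in K$, since both subgroups are normal.

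For (1), let $N\lhd G$ be non-trivial and suppose for contradiction that there are characters $\phi,\psi$ on $N$, neither equal to $\delta_e$, satisfying $\phi(n)\psi(n)=0$ for every $n\neq e$. The strategy is to lift this factorization to $G$. The natural candidate is the \emph{trivial extension} $\tilde\phi(g):=\phi(g)\mathbf{1}_N(g)$, which is positive definite on $G$ (realized as a matrix coefficient of the induced representation $\mathrm{Ind}_N^G \pi_\phi$ at the vector corresponding to $\xi_\phi$ in the coset $[e]$), is normalized, and satisfies $\tilde\phi\,\tilde\psi=0$ off $e$ by construction. The issue is that $\tilde\phi$ is genuinely $G$-conjugation invariant (hence a character on $G$) only when $\phi$ is $G$-invariant on $N$, i.e.\ $\phi(gng^{-1})=\phi(n)$ for all $g\in G,\,n\in N$, which is strictly stronger than $N$-conjugation invariance.

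The main obstacle is therefore to replace $\phi,\psi$ with $G$-invariant characters $\bar\phi,\bar\psi$ on $N$ that preserve both the factorization and the non-triviality. I would approach this by averaging over the $G$-orbit $\{\phi^g:g\in G\}$, where $\phi^g(n):=\phi(gng^{-1})$, inside the compact convex space of characters on $N$: when $G/N$ is amenable, a Markov--Kakutani or F\o lner-type fixed point argument produces a $G$-invariant element in the closed convex hull of the orbit, and in the general case one would combine a Krein--Milman/extreme-point argument with the observation that the support of any convex combination of $\phi^g$'s sits inside the $G$-orbit of $\mathrm{supp}(\phi)$, keeping control of how averaging interacts with the support-disjointness condition. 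Once $G$-invariant $\bar\phi,\bar\psi\neq\delta_e$ with $\bar\phi\,\bar\psi=0$ off $e$ have been produced, their trivial extensions become characters on $G$ violating NFRC, completing the proof. The delicate step---ensuring that $G$-averaging does not annihilate non-triviality while maintaining the factorization---is where I expect the main technical effort to lie.
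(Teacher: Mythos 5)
Your part (2) is correct and is exactly the paper's argument: take the indicator functions of $H$ and $K$ (conjugation-invariant by normality, positive definite as pull-backs of $\delta_e$ along $G\twoheadrightarrow G/H$, resp.\ $G/K$), note their product is $\delta_e$, and apply the non-factorizability hypothesis.

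Part (1), however, is not a proof: it stops at a gap you yourself declare, and the repair you sketch would not close it. The paper's proof of (1) is precisely the trivial-extension argument you set aside: extend $\phi,\psi$ by zero to $G$ and feed the extensions into the NFRC hypothesis; no averaging appears. You are right that, for an arbitrary character $\phi$ of a normal subgroup $N$, the extension by zero is positive definite and normalized but is a character of $G$ only when $\phi$ is invariant under the conjugation action of all of $G$ on $N$ — the paper treats this step as immediate, and in its actual applications (e.g.\ in the proof of Theorem \ref{theorem: NFRC consequences}) the characters on $H$ are restrictions of $G$-conjugation-invariant functions such as $\tau(E(g)g^{-1})$ with $P$ a $G$-invariant subalgebra, so the extension is indeed a $G$-character there. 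But having isolated this issue, you must either resolve it or prove the statement only for $G$-invariant characters; the orbit-averaging plan does neither. Concretely: (i) averaging does not preserve the factorization, since for $\bar\phi=\sum_g c_g\phi^g$ and $\bar\psi=\sum_h d_h\psi^h$ the product $\bar\phi(n)\bar\psi(n)$ contains cross terms $\phi(gng^{-1})\psi(hnh^{-1})$ with $g\neq h$, and only the diagonal terms $g=h$ are known to vanish; (ii) even when an invariant element of the closed convex hull of the orbit exists (amenable $G/N$, Markov--Kakutani), it may perfectly well be $\delta_e$, so non-triviality is not preserved — the two requirements you flag pull against each other and you give no mechanism reconciling them; (iii) when $G/N$ is non-amenable there is in general no fixed point of the $G$-action in that hull at all, and the Krein--Milman/support remark is not a substitute for a fixed-point theorem (the existence of such invariant characters is itself a strong, charmenability-type property). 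So as written, part (1) is unproved in your proposal; the paper's intended route is the one-line extension by zero, which your own (legitimate) observation shows should be applied to characters that are already invariant under the $G$-action, as they are wherever the proposition is used.
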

\begin{proof}
(1) Let $\{e\}\neq H\lhd G$ and $\phi,\psi$ be two characters on $H$ with $\phi(s)\psi(s)=0$ for all $e\neq s\in H$. Then we may extend them trivially to get characters on $G$, say $\phi',\psi'$. That is, $\phi'$ agrees with $\phi$ on $H$ and vanishes outside of $H$. Similarly for $\psi'$. Then $\phi'(g)\psi'(g)=0$ for all $e\neq g\in G$. Hence since $G$ has non-factorizable regular character, we deduce that $\phi'\equiv \delta_e$ or $\psi'\equiv \delta_e$. Clearly, this implies $\phi\equiv \delta_e$ or $\psi\equiv \delta_e$.

(2) Assume that $H\lhd G$ and $K\lhd G$ with $H\cap K=\{e\}$, then set $\phi=\delta_H$ and $\psi=\delta_K$. Clearly, $\phi(s)\psi(s)=0$ for all $e\neq s\in G$. Hence $H$ or $K$ is trivial depending on $\phi\equiv\delta_e$ or $\psi\equiv \delta_e$.
\end{proof}

As we will show in this section, the class of groups having non-factorizable regular character is sufficiently rich. 
One of the main results of this section is the following:
\begin{theorem}\label{theorem: NFRC consequences}
Let $G$ be an ICC group. Assume $G$ has non-factorizable regular character. Then every $G$-invariant subfactor of $L(G)$ is equal to $L(H)$  for some normal subgroup $H$ of $G$. If we further assume that $G$ is non-amenable and has trivial amenable radical, then $G$ has the ISR property.
\end{theorem}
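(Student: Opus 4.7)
The plan is to attack the two assertions in succession, with the first doing the real work via the non-factorizable regular character (NFRC) hypothesis, and the second a reduction to the first using the Amrutam--Hartman--Oppelmayer machinery to force subfactoriality. For the subfactor statement, fix a $G$-invariant subfactor $P\subseteq L(G)$ with $\tau$-preserving conditional expectation $E:L(G)\to P$; by Proposition \ref{prop: criterion on P=L(H)}, it suffices to show $E(u_g)\in\mathbb{C}u_g$ for every $g\in G$. I would attach to $P$ two characters on $G$: first, $\phi:=\mathbf{1}_H$ where $H:=\{g\in G:u_g\in P\}$, which is a normal subgroup of $G$ by the $G$-invariance of $P$, so $\phi$ is positive definite as a matrix coefficient of the quasi-regular representation on $\ell^2(G/H)$; second, a character $\psi$ encoding the portion of $P$ sitting beyond $L(H)$, produced from the Dirac invariant random von Neumann subalgebra determined by $P$ via the AHO correspondence in \cite{aho}, with the subfactor hypothesis $Z(P)=\mathbb{C}$ ensuring $\psi$ is well-defined and extremal.

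The pivotal step is the disjointness $\phi(s)\psi(s)=0$ for every $s\neq e$: if $s\in H\setminus\{e\}$ then $u_s\in L(H)\subseteq P$ is already captured by $\phi$, so $\psi$ should register no further contribution at $s$. Granted this, NFRC forces $\phi\equiv\delta_e$ or $\psi\equiv\delta_e$. The branch $\psi\equiv\delta_e$ says $P$ has no content beyond $L(H)$, hence $P=L(H)$, and Proposition \ref{prop: criterion on P=L(H)} closes the case. The branch $\phi\equiv\delta_e$ means $H=\{e\}$, so $P$ contains no nontrivial group unitary; combining this with Proposition \ref{prop: control supports for elements in relative commutants} applied to $E(u_g)\in L(\langle g\rangle)'\cap L(G)$ together with Lemma \ref{lemma: vanishing character from many conjugates} should force $E(u_g)=0$ for all $g\neq e$, giving $P=\mathbb{C}=L(\{e\})$.

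For the ISR conclusion, assume additionally that $G$ is non-amenable with trivial amenable radical, and let $P\subseteq L(G)$ be an arbitrary $G$-invariant subalgebra. The plan is to show $P$ is automatically a subfactor and then invoke the first half. The center $Z(P)$ is a $G$-invariant abelian subalgebra of $L(G)$; the AHO theorem, combined with the amenable-radical and non-amenability hypotheses on $G$, should force $Z(P)=\mathbb{C}$, since any nontrivial $G$-equivariant diffuse abelian subalgebra would manufacture either a nontrivial amenable normal subgroup of $G$ or an amenable $G$-action on $L(G)$, contradicting the hypotheses. Once $Z(P)=\mathbb{C}$, $P$ is a subfactor and the first half yields $P=L(H)$. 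The main obstacle is clearly the precise construction of $\psi$ and the verification of its disjointness from $\phi$ off the identity: this is where the AHO correspondence does its essential work, and it is precisely the factor hypothesis on $P$ that prevents $\psi$ from decomposing into characters whose pointwise product with $\phi$ could survive on $G\setminus\{e\}$.
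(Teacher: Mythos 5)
Your reduction of the ISR statement to the subfactor statement is exactly the paper's: $\mathcal{Z}(P)$ is a $G$-invariant abelian (hence amenable) subalgebra, and \cite[Theorem A]{aho} together with triviality of the amenable radical forces $\mathcal{Z}(P)=\mathbb{C}$. The gap is in the first half, and it is genuine: you never construct the second character $\psi$, and "the AHO correspondence" does not supply one. The engine of the paper's argument is the structural decomposition from the proof of \cite[Theorem 3.1]{cds}: for a $G$-invariant subfactor $P$ there is a normal subgroup $H\lhd G$, namely $H=\{g\in G: g=a_gb_g,\ a_g\in\mathcal{U}(P),\ b_g\in\mathcal{U}(P'\cap L(G))\}$, with $L(H)=P\,\bar{\otimes}\,(P'\cap L(H))$. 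One then takes $\phi(g)=\tau(E(g)g^{-1})$ and $\psi(g)=\tau(E'(g)g^{-1})$ on $H$, where $E,E'$ are the expectations onto $P$ and $P'\cap L(H)$; the splitting gives $E(s)=a_s\tau(b_s)$ and $E'(s)=b_s\tau(a_s)$, so $\phi(s)=|\tau(b_s)|^2$, $\psi(s)=|\tau(a_s)|^2$, and the disjointness $\phi(s)\psi(s)=0$ for $s\neq e$ is forced by $0=\tau(s)=\tau(a_s)\tau(b_s)$. Note also that NFRC is applied to $H$ (via heredity to normal subgroups), not to $G$ directly, since the characters live on $H$. Without some such decomposition, your claim that $\phi(s)\psi(s)=0$ off the identity has no proof; "captured by $\phi$, so $\psi$ should register no further contribution" is not an argument.

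Your choice of $\phi=\mathbf{1}_H$ with $H=\{g: u_g\in P\}$ also derails the endgame. In the paper's setup, $\phi\equiv\delta_e$ literally means $E(s)=0$ for all $e\neq s\in H$ (since $\tau(E(s)s^{-1})=\|E(s)\|_2^2$), and combined with $P\subseteq L(H)$ this gives $P=\mathbb{C}$; the branch $\psi\equiv\delta_e$ gives $P'\cap L(H)=\mathbb{C}$, hence $P=L(H)$. With your $\phi$, the branch $\phi\equiv\delta_e$ only says that $P$ contains no nontrivial group unitary, which does not imply $P=\mathbb{C}$ (ruling out exactly such "invisible" invariant subfactors is the content of the theorem), and the proposed combination of Proposition \ref{prop: control supports for elements in relative commutants} with Lemma \ref{lemma: vanishing character from many conjugates} does not produce $E(u_g)=0$: the lemma needs a character that you know vanishes on many quotients of conjugates, which is precisely the information the unconstructed $\psi$ was supposed to carry. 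To repair the proof you should replace your pair $(\phi,\psi)$ by the pair attached to the Chifan--Das--Sun splitting as above.
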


\begin{proof}
(1)  Let $G$ be an ICC group with non-factorizable regular character.  Let $P$ be a $G$-invariant subfactor of $L(G)$. 
By the proof of \cite[Theorem 3.1]{cds}, there exists some normal subgroup $H\lhd G$ such that $P\vee (P'\cap L(H))=L(H)$. More precisely, \[H=\{g\in G: g=a_gb_g~\text{for some $a_g\in \mathcal{U}(P)$ and $b_g\in\mathcal{U}(P'\cap L(G))$}\}.\]

If $H=\{e\}$, then $P=\mathbb{C}$, we are done. 
Without loss of generality, we may assume $H\neq \{e\}$.

From this decomposition, we define $\phi(g)=\tau(E(g)g^{-1})$ and $\psi(g)=\tau(E'(g)g^{-1})$ for all $g\in H$, where $E: L(G)\rightarrow P$ and $E': L(G)\rightarrow P'\cap L(H)$ denote the trace preserving conditional expectations. It is known that $\phi,\psi$ are characters on $H$, say by \cite[Proposition 3.2]{jz}. 

By the definition of $H$, it is not hard to see that $E(s)=a_s\tau(b_s)$ and $E'(s)=b_s\tau(a_s)$ for all $s\in H$. We explain the proof for the 1st equality, and the other one can be proved similarly. 

 Indeed, we just need to verify that $\langle s-a_s\tau(b_s), x\rangle=0$ for all $x\in P$, \ie 
\begin{align*}
\tau((s-a_s\tau(b_s))x^*)=\tau((a_sb_s-a_s\tau(b_s))x^*)=\tau(x^*a_s(b_s-\tau(b_s)))=\tau(x^*a_s)\tau(b_s-\tau(b_s))=0,
\end{align*}
where for the 2nd last equality, we have used the fact that $\tau$ on $L(H)=P\bar{\otimes}(P'\cap L(H))$ satisfies that $\tau(xy)=\tau(E(x)y)=\tau(xE(y))=\tau(x)\tau(y)$ for all $x\in P$ and $y\in P'\cap L(H)$ which follows from $E(y)\in P\cap P'=\mathbb{C}$ and hence $E(y)=\tau(y)$.

Therefore, we have $\phi(s)=\tau(a_s\tau(b_s)b_s^*a_s^*)=|\tau(b_s)|^2$ and similarly, $\psi(s)=|\tau(a_s)|^2$. Note that $0=\tau(s)=\tau(a_s)\tau(b_s)$ for all $e\neq s\in H$. 
Thus $\phi(s)\psi(s)=0$ for all $e\neq s\in H$. As a normal subgroup of $G$, $H$ also has non-factorizable regular character by item (2) in Proposition \ref{prop: NFRC basic properties}, therefore, we deduce that $\phi\equiv\delta_e$ or $\psi\equiv\delta_e$. 

If $\phi\equiv\delta_e$, then $0=\phi(s)=\tau(E(s)s^{-1})=\tau(E(s)E(s)^*)=\left\|E(s)\right\|_2^2$, \ie $E(s)=0$ for all $e\neq s\in H$. Thus $P=\mathbb{C}$.

If $\psi\equiv\delta_e$, then by a similar argument as above, we deduce that $E'(s)=0$ for all $e\neq s\in H$. Thus
$P'\cap L(H)=\mathbb{C}$ and hence $L(H)=P\bar{\otimes}\mathbb{C}=P$. 

This finishes the proof of the first part.

(2) In addition, assume now that $G$ is non-amenable and has trivial amenable radical. Let $P$ be a $G$-invariant von Neumann subalgebra. Then its center $\mathcal{Z}(P)$ is also a $G$-invariant abelian and hence amenable von Neumann subalgebra. Since $G$ has trivial amenable radical, by \cite[Theorem A]{aho} we get that $\mathcal{Z}(P)=\mathbb{C}$, \ie $P$ is a subfactor. Thus, the above proof shows that $P=L(H)$ for some normal subgroup $H$ of $G$. 
\end{proof}

In view of Theorem \ref{theorem: NFRC consequences} and the work in \cite{jz}, it is natural to ask whether ICC amenable groups with non-factorizable regular character has the ISR property. A naive approach to show this might be to argue that for amenable ambient groups $G$,  every abelian $G$-invariant von Neumann subalgebra  $P\subseteq L(G)$ is contained in $L(H)$ for some  abelian normal subgroup $H\leq G$. However, the following shows this does not hold in general.

\begin{proposition}\label{prop: abelian invariant subalgebras}
Let $G=A_5\wr \mathbb{Z}$, where $A_5$ is the alternating group on five letters. Then $G$ has no non-trivial abelian normal subgroups and $G$ does not have the ISR property.
\end{proposition}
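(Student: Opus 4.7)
My plan is to establish the two claims separately and chain them together. Write $B = \bigoplus_{n\in\mathbb{Z}} A_5^{(n)}$ for the base subgroup, let $t$ denote a generator of the $\mathbb{Z}$ factor, and let $\sigma(b) = tbt^{-1}$ be the induced shift on $B$, so that $G = B \rtimes_\sigma \mathbb{Z}$.

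For the first assertion, suppose $N \lhd G$ is abelian. The intersection $N \cap B$ is a shift-invariant normal subgroup of $B$, so I would invoke the classical fact that every normal subgroup of $\bigoplus_n A_5$ has the form $\bigoplus_{n \in S} A_5^{(n)}$ for some $S \subseteq \mathbb{Z}$; shift-invariance forces $S \in \{\emptyset, \mathbb{Z}\}$, and $N$ being abelian rules out $S = \mathbb{Z}$. Hence $N \cap B = \{e\}$, and $N$ injects into $G/B \cong \mathbb{Z}$; it is therefore either trivial or cyclic of the form $\langle g \rangle$ with $g = bt^k$, $k \neq 0$. In the latter case, for every $b' \in B$ the conjugate $b'gb'^{-1}$ lies in $N$ and projects to $k$ in $\mathbb{Z}$, so it must equal $g$ itself; unwinding this identity reduces to $b^{-1} b' b = \sigma^k(b')$ for every $b' \in B$. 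Evaluating both sides on $b' = \iota_0(x)$ (the element $x \in A_5$ placed at position $0$) yields an element supported at coordinate $0$ on the left and at coordinate $k$ on the right, a contradiction for $x \neq e$ and $k \neq 0$. Therefore $N = \{e\}$.

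For the second assertion, I would exhibit a $G$-invariant abelian von Neumann subalgebra of $L(G)$ that strictly contains $\mathbb{C}$. The natural choice is $P := \mathcal{Z}(L(B)) \subseteq L(G)$, the center of $L(B)$. It is abelian by construction, and since $B \lhd G$, conjugation by any $g \in G$ preserves $L(B)$ and hence its center, so $P$ is $G$-invariant. Moreover $P$ strictly contains $\mathbb{C}$ because $B$ is not ICC: the $B$-conjugacy class of $\iota_0((1\,2)(3\,4))$ is finite, and its sum gives a non-scalar central element of $L(B)$.

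To conclude, if $G$ had the ISR property, then $P = L(H)$ for some $H \lhd G$; abelianness of $P$ would force $H$ to be abelian, hence trivial by the first assertion, giving $P = \mathbb{C}$ and contradicting the previous paragraph. I expect the main technical step to be the case $k \neq 0$ in the first assertion, where one must leverage abelianness of $N$ (which collapses the cyclic subgroup to force $b'gb'^{-1} = g$) to derive the impossible identity between an inner automorphism of $B$ and a non-trivial coordinate shift.
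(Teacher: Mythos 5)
Your proof is correct and takes essentially the same route as the paper: the paper also uses $\mathcal{Z}(L(\oplus_{\mathbb{Z}}A_5))$ as the $G$-invariant abelian subalgebra (citing the remark after Corollary 3.3 in \cite{cds} for why it is not of the form $L(H)$) and rules out non-trivial abelian normal subgroups by first showing the intersection with $\oplus_{\mathbb{Z}}A_5$ is trivial and then deriving a contradiction from conjugation versus the coordinate shift. Your version is merely a bit more self-contained, since you exhibit an explicit non-scalar central element and deduce the failure of the ISR property directly from the first assertion rather than from the cited remark.
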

\begin{proof}
As indicated in the paragraph after Corollary 3.3 in \cite{cds}, we may consider $A=\mathcal{Z}(L(\oplus_{\mathbb{Z}}A_5))$. Clearly, $A$ is $G$-invariant but is not of the form  $L(H)$ for some normal subgroup in $G$. 

Next, we check that $G$ contains no non-trivial abelian normal subgroups.

Let $H$ be a non-trivial abelian normal subgroup in $G$.
Note that $A_5$ is a simple group and hence $H\cap \oplus_{\mathbb{Z}}A_5=\{e\}$ by considering the coordinate projection maps. This implies that $H=\{(\phi(s), s):~s\in\pi(H)\}$, where $\pi: G\twoheadrightarrow G/{\oplus_{\mathbb{Z}}A_5}\cong \mathbb{Z}$ is the projection 
and $\phi: \pi(H)\rightarrow \oplus_{\mathbb{Z}}A_5$ is a group homomorphism. Then from $H\ni g(\phi(s), s)g^{-1}=(\sigma_g(\phi(s)), gsg^{-1})=(\sigma_g(\phi(s)),s)$ 
we deduce that $\phi(s)=\phi(gsg^{-1})=\sigma_g(\phi(s))$ for all $g\in \mathbb{Z}$ and all $s\in \pi(H)$, where $\sigma: \mathbb{Z}\rightarrow Aut(\oplus_{\mathbb{Z}}A_5)$ is the group homomorphism defined by shifting coordinates. Clearly, this implies that $\phi(s)=e$ for all $s\in \mathbb{Z}$, \ie $H\subseteq \mathbb{Z}$. Now, it is clear to 
check that this implies $H$ is trivial since $\sigma_s$ does not fix any non-trivial elements in $\oplus_{\mathbb{Z}}A_5$ for any non-trivial $s\in \mathbb{Z}$.
\end{proof}

Since it is still unclear what other conditions on ICC amenable groups would be needed for deducing the ISR property, we ask the following natural question.

\begin{question}
Is there any ICC infinite amenable group without the ISR property but having non-factorizable regular characters?
\end{question}
\noindent 
In view of Theorem \ref{theorem: NFRC consequences} and item (1) in Proposition \ref{prop: NFRC basic properties}, it is also natural to ask the following question.
\begin{question}
Is the ISR property inherited by normal subgroups?
\end{question}

\begin{remark}
Note that $F_2\times F_2$ has the ISR property as shown in \cite{aj}. Proposition \ref{prop: NFRC basic properties} implies that $F_2\times F_2$ does not have non-factorizable regular character. In view of Theorem \ref{theorem: NFRC consequences}, we know that for non-amenable groups with trivial amenable radical, having non-factorizable regular character is a sufficient but not necessary condition for showing the ISR property.
\end{remark}

\begin{proposition} Let $H<G$ be countable discrete groups. Assume that $G$ is ICC and $[G:H]<\infty$. If $H$ has non-factorizable regular character, then $G$ also has non-factorizable regular character.
\end{proposition}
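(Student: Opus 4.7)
The plan is to restrict the hypothesis to $H$, apply the non-factorizable regular character property of $H$ to conclude that one of the restricted characters vanishes off the identity, and then bootstrap this vanishing to all of $G$ via a pigeonhole argument on the finitely many right $H$-cosets combined with Lemma~\ref{lemma: vanishing character from many conjugates}.

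First I would observe that $\phi|_H$ and $\psi|_H$ are characters on $H$: positive-definiteness and normalization are inherited directly, and conjugation-invariance on $H$ is immediate from conjugation-invariance on $G$. The hypothesis $\phi(s)\psi(s)=0$ for every $s\in G\setminus\{e\}$ restricts to $\phi|_H(s)\psi|_H(s)=0$ for every $s\in H\setminus\{e\}$, so the non-factorizable regular character property of $H$ gives, after possibly swapping the roles of $\phi$ and $\psi$, that $\phi|_H\equiv\delta_e$.

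Next, I fix $g\in G\setminus\{e\}$ and aim to show $\phi(g)=0$. Decompose $G=\bigsqcup_{i=1}^{n}Hx_i$ with $n=[G:H]<\infty$. Since $G$ is ICC, the $G$-conjugacy class $\cC$ of $g$ is infinite, so by pigeonhole some right coset $Hx_k$ contains infinitely many elements of $\cC$. Given any $N\in\N$, choose $N$ distinct conjugates $g_1,\ldots,g_N\in\cC\cap Hx_k$; writing $g_i=h_ix_k$ with $h_i\in H$, one has $g_ig_j^{-1}=h_ih_j^{-1}\in H\setminus\{e\}$ whenever $i\neq j$, whence $\phi(g_ig_j^{-1})=0$ by the preceding step. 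Lemma~\ref{lemma: vanishing character from many conjugates} applied with $\epsilon=0$ then yields $|\phi(g)|\leqslant 0$, so $\phi(g)=0$. As $g$ was arbitrary, $\phi\equiv\delta_e$ on $G$, which is what was required. The only delicate design choice is to use \emph{right} cosets rather than left cosets, so that $g_ig_j^{-1}$ actually lands in $H$ and not in a conjugate of $H$; beyond this, the argument is essentially routine, and I do not foresee a serious obstacle.
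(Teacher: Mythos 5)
Your proof is correct and follows essentially the same route as the paper: restrict to $H$, use the ICC assumption and finite index to find infinitely many conjugates of $g$ in a single coset of $H$, and apply Lemma~\ref{lemma: vanishing character from many conjugates} with $\epsilon=0$. The only cosmetic difference is that the paper uses left cosets $sH$ and the products $g_n^{-1}g_m\in H$, which is equally fine since characters are conjugation-invariant (hence tracial), so $\phi(g_ig_j^{-1})=\phi(g_j^{-1}g_i)$; your right-coset choice merely avoids invoking that observation.
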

\begin{proof}
Let $\phi,\psi$ be two characters on $G$ such that $\phi(g)\psi(g)=0$ for all $e\neq g\in G$. We aim to show that $\phi\equiv\delta_e$ or $\psi\equiv\delta_e$.

Since $H$ has non-factorizable regular character and we may consider $\phi,\psi$ as characters on $H$, we deduce that $\phi|_H\equiv \delta_e$ or $\psi|_H\equiv \delta_e$. Without loss of generality, we assume that $\phi|_H\equiv \delta_e$. Let us show that in fact $\phi\equiv\delta_e$.

For any $e\neq g\in G$, denote by $C(g)$ the conjugacy class of $g$ in $G$. 
We observe that there exists some $s\in G$ such that $\sharp(C(g)\cap sH)=\infty$. To see this, just note that $C(g)=C(g)\cap G=C(g)\cap \sqcup_{s\in I}sH=\sqcup_{s\in I}(C(g)\cap sH)$, where $I$ denotes a collection of left $H$-cosets representatives in $G$. Since $\sharp I=[G: H]<\infty$ and $\sharp C(g)=\infty$ by the ICC assumption, we deduce that for some $s\in I$, $\sharp(C(g)\cap sH)=\infty$.

Write $C(g)\cap sH=\{g_n: n\geq 1\}$. Then note that $g_n^{-1}g_m\in H$ for all $n, m\geq 1$. Hence $\phi(g_n^{-1}g_m)=0$ for all $n\neq m$. Therefore, using Lemma \ref{lemma: vanishing character from many conjugates} we deduce that $\phi(g)=\phi(g_n)=0$. It follows that $\phi=\delta_e$.
\end{proof}

\begin{proposition}
    Let $G$ be a countable discrete group which is an increasing union of subgroups $G_i$ such that each $G_i$ has non-factorizable regular character. Then $G$ has non-factorizable regular character.
\end{proposition}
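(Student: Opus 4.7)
The plan is to pass to restrictions on each $G_i$, invoke the non-factorizable regular character hypothesis on each $G_i$, and then argue that the ``witnessing'' character is forced to be uniform across the directed system.

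First I would assume without loss of generality that the index set is $\mathbb{N}$ with $G_1\subseteq G_2\subseteq\cdots$ and $G=\bigcup_{i\geq 1}G_i$ (valid since $G$ is countable). Now let $\phi,\psi$ be two characters on $G$ with $\phi(s)\psi(s)=0$ for every $e\neq s\in G$. For each $i$, the restrictions $\phi|_{G_i}$ and $\psi|_{G_i}$ are clearly characters on $G_i$ (positive definiteness, conjugation-invariance under inner automorphisms of $G_i$, and normalization all restrict), and they satisfy the orthogonality condition $\phi|_{G_i}(s)\psi|_{G_i}(s)=0$ for all $e\neq s\in G_i$. Since $G_i$ has non-factorizable regular character, at least one of $\phi|_{G_i}\equiv \delta_e$ or $\psi|_{G_i}\equiv \delta_e$ must hold.

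Next I would set
\[
A=\{i\in\mathbb{N}:\phi|_{G_i}\equiv \delta_e\},\qquad B=\{i\in\mathbb{N}:\psi|_{G_i}\equiv \delta_e\},
\]
so that $A\cup B=\mathbb{N}$. The key structural observation is that both $A$ and $B$ are downward closed in $\mathbb{N}$: if $i\in A$ and $j\leq i$, then $G_j\subseteq G_i$, so $\phi|_{G_j}=(\phi|_{G_i})|_{G_j}\equiv \delta_e$, giving $j\in A$; the same argument works for $B$.

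This is the main (and essentially only) subtle point: a priori the witness could oscillate between $\phi$ and $\psi$ as $i$ varies. However, since $A\cup B=\mathbb{N}$ with both sets downward closed, at least one of $A,B$ must be unbounded in $\mathbb{N}$; combined with downward closure this forces $A=\mathbb{N}$ or $B=\mathbb{N}$. Say $A=\mathbb{N}$. Then for every $g\in G=\bigcup_i G_i$, picking any $i$ with $g\in G_i$ gives $\phi(g)=\phi|_{G_i}(g)=\delta_e(g)$, so $\phi\equiv\delta_e$ on $G$. The case $B=\mathbb{N}$ is symmetric and yields $\psi\equiv\delta_e$. Hence $G$ has non-factorizable regular character, completing the argument.
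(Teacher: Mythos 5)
Your proof is correct and uses essentially the same idea as the paper: restrict $\phi,\psi$ to each $G_i$, apply the hypothesis there, and exploit the increasing-union structure. The paper phrases it more directly by contradiction — if $\phi\not\equiv\delta_e$ and $\psi\not\equiv\delta_e$, pick witnesses for both and a common $G_i$ containing them, contradicting the property of $G_i$ — which is just a compressed form of your downward-closed index-set bookkeeping.
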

\begin{proof}
    Let $\phi$, $\psi$ be two characters on $G$ with $\phi(g)\psi(g)=0$ for all $e\neq g\in G$. Assume that $\phi\not\equiv \delta_e$ and $\psi\not\equiv \delta_e$, then there exists some $i$ such that $\phi|_{G_i}\not \equiv\delta_e\not \equiv \delta_e$. But this leads to a contradiction, since $G_i$ has non-factorizable regular character.
\end{proof}
\subsection{Non-factorizable regular character for groups arising from infinite simple groups}\label{subsection:NFRC_from_simple_groups} Let us first recall some notation and introduce some notations we will need in this section. 
For a group $G$ denote by $\IChar(G)$ the set of indecomposable characters on $G$. Recall that an invariant random subgroup ($\IRS$, for short) of $G$ is any Borel probability measure $\mu$ on the space $\Sub(G)$ of subgroups of $G$, which invariant with respect to the action of $G$ by conjugations $$g:\Sub(G)\to\Sub(G),\;g(H)=gHg^{-1},\;g\in G,\;H<G.$$ Denote by $\IRS(G)$ the space of IRS of $G$ and by $\IRS^e(G)$ the space of ergodic IRS of $G$. It is known that for any $\IRS$ $\mu$ of $G$ the formula
\begin{equation}\label{eq: chi_mu definition}
    \chi_{\mu}(g)=\mu(\{H\in Sub(G):~g\in H\}),\;g\in G,
\end{equation} defines a character on $G$.
\begin{proposition}\label{prop: examples of NSFC-simple groups}
The following classes of infinite ICC simple groups $G$ have non-factorizable regular character.
\begin{itemize}
\item[(1)] Character rigid groups, \ie groups $G$ for which $\IChar(G)=\{\delta_e,\textbf{1}_G\}$;
\item[(2)] $L(Alt)$-groups, \ie the groups which can be expressed as $G=\cup_{i\in\mathbb{N}}G_i$, where $G_i$ a strictly increasing chain of finite alternating groups.\cites{tt_algebra,thomas_etds}; 
\end{itemize}
\end{proposition}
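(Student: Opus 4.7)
The plan is to handle the two classes separately.

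\textbf{Part (1): Character rigid groups.} The approach is purely formal and relies on the Choquet simplex structure of the character space: every character on $G$ is represented as an integral over $\IChar(G)$. Under the hypothesis $\IChar(G) = \{\delta_e, \textbf{1}_G\}$ this reduces to a two-point convex combination, so for any characters $\phi, \psi$ on $G$ one can write $\phi = t_\phi\,\delta_e + (1 - t_\phi)\,\textbf{1}_G$ and $\psi = t_\psi\,\delta_e + (1 - t_\psi)\,\textbf{1}_G$ with $t_\phi, t_\psi \in [0,1]$. Since $G$ is infinite, we may pick any $e \neq s \in G$; the factorization assumption then forces $(1 - t_\phi)(1 - t_\psi) = \phi(s)\psi(s) = 0$, so one of $t_\phi, t_\psi$ equals $1$ and the corresponding character is $\delta_e$.

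\textbf{Part (2): $L(Alt)$-groups.} Here I would invoke the classification of indecomposable characters on $L(Alt)$-groups from \cites{tt_algebra, thomas_etds}, which is of Thoma type and extends the $S_\infty$ theory. The key property to extract is that every indecomposable character $\chi \neq \delta_e$ on $G$ is strictly positive on a fixed nontrivial conjugacy class, namely that of some $3$-cycle $t$ sitting inside one of the finite alternating groups $G_i \subset G$. Once this is known, integrating across the Choquet representation shows that every character $\phi$ with $\phi \neq \delta_e$ satisfies $\phi(t) > 0$, and similarly for $\psi$. Assuming neither $\phi$ nor $\psi$ equals $\delta_e$ would then give $\phi(t)\psi(t) > 0$, contradicting the hypothesis that $\phi(s)\psi(s) = 0$ for every $s \neq e$.

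The main obstacle lies entirely in part (2): extracting from the classification in \cites{tt_algebra, thomas_etds} the existence of a single nontrivial conjugacy class on which every non-$\delta_e$ indecomposable character is strictly positive. For the special case $G = A_\infty$ this is immediate from Thoma's formula, since the value of an indecomposable character on a $3$-cycle equals $\sum_i \alpha_i^3 + \sum_i \beta_i^3$, which vanishes only for Thoma parameters $(\alpha, \beta) = (0, 0)$, and this vanishing parameter corresponds precisely to $\delta_e$. For the general $L(Alt)$-setting one must parse the analogous parameter description of $\IChar(G)$ developed in \cites{tt_algebra, thomas_etds}; as an alternative route, Lemma \ref{lemma: vanishing character from many conjugates} applied to sufficiently many distinct $G_i$-conjugates of a fixed $3$-cycle $t$ (chosen so that the products of distinct conjugates remain in a manageable conjugacy class) may be used to propagate $\phi(t) = 0$ to $\phi \equiv \delta_e$ without direct appeal to the full classification.
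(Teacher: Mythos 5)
Your Part (1) is correct and is the same argument as the paper's: with $\IChar(G)=\{\delta_e,\textbf{1}_G\}$ every character is a convex combination $\lambda\delta_e+(1-\lambda)\textbf{1}_G$, and the product condition at any single $s\neq e$ forces one coefficient $1-\lambda$ to vanish.

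Part (2), however, has a genuine gap. Your entire argument rests on the "key property" that there is one fixed nontrivial element (a $3$-cycle $t$ in some $G_i$) on which \emph{every} indecomposable character $\chi\neq\delta_e$ of $G$ is strictly positive, and you assert this follows from the classification in \cites{tt_algebra,thomas_etds} because it is "of Thoma type." That is only true for $G\cong Alt(\mathbb{N})$, where Thoma's formula gives $\chi(t)=\sum_i\alpha_i^3+\sum_i\beta_i^3$ and the conclusion you describe. For a general $L(Alt)$-group the embeddings $G_i\hookrightarrow G_{i+1}$ may be diagonal, the classification is not given by Thoma parameters but by ergodic IRSs (every indecomposable $\chi\neq\delta_e$ is of the form $\chi_\mu(g)=\mu(\{H:g\in H\})$ for an ergodic IRS $\mu$), and no such uniform element is available: the paper's proof has to split according to whether $G$ has linear or sublinear natural orbit growth, using the Thomas--Tucker-Drob description $\IRS^e(G)=\{\delta_{\{e\}},\delta_{\{G\}},\nu_r\}$ with $\chi_{\nu_r}=\chi_\nu^r$ in the linear case, and in the sublinear case it writes $G=\cup_\ell G(\ell)$ with each $G(\ell)$ of linear growth and chooses the witnessing element $s\in G(\ell)$ \emph{depending on the given pair of characters} $\phi,\psi$ (one first passes to an $\ell$ with $\phi|_{G(\ell)}\neq\delta_e\neq\psi|_{G(\ell)}$). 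Indeed the remark following the paper's proof explicitly notes that the "single element positive for all nonregular characters" statement was obtained in every case \emph{except} the sublinear subcase, so your key property is exactly the claim the paper avoids and does not prove.

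Your fallback route via Lemma \ref{lemma: vanishing character from many conjugates} does not repair this as sketched. From $\phi(t)\psi(t)=0$ you only learn, say, $\phi(t)=0$, hence by conjugation invariance that $\phi$ vanishes on the conjugacy class of $t$; to apply the lemma to an arbitrary $g\in G$ you would need many conjugates $g_1,\ldots,g_n$ of $g$ with $\phi(g_ig_j^{-1})$ small, but the differences $g_ig_j^{-1}$ are in general not conjugates of the $3$-cycle $t$, so the known vanishing set gives you no control on them. More fundamentally, deducing $\phi\equiv\delta_e$ from $\phi(t)=0$ at a single fixed $t$ is precisely the uniform statement that fails to be justified; in the paper this kind of propagation is never needed, because positivity at a character-dependent element is enough to contradict $\phi(s)\psi(s)=0$. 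To complete Part (2) you would need either to actually prove your key property from the IRS classification (at least in the linear growth, almost diagonal case) and then perform the reduction of the sublinear case to the linear one, or to follow the paper's case analysis directly.
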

\begin{proof}
(1) By definition, for any character $\phi$ on $G$, we may write $\phi=\lambda\delta_e+(1-\lambda)\textbf{1}_G$ for some $\lambda\in [0, 1]$. Thus, if $\phi\not\equiv \delta_e$, \ie $\lambda\neq 1$, then we have $\phi(s)=1-\lambda>0$ for all $e\neq s\in G$.
This clearly implies $G$ has non-factorizable regular character.

(2) We will split the proof  into two cases depending on whether $G\cong Alt(\mathbb{N})$.

Case 1: $G\cong Alt(\mathbb{N})$.

By Thoma's classification theorem \cite{thoma_mz} (see also \cite[\S~6]{thomas_etds}), we know that every indecomposable character on $G$ is the restriction of some indecomposable character on the overgroup $S_{\infty}$. More precisely, for any indecomposable character $\chi$ on $G$, there exist two sequences $\{\alpha_i\mid i\in\mathbb{N}^+\}$ and $\{\beta_i\mid~i\in\mathbb{N}^+\}$ of non-negative real numbers satisfying 
\begin{itemize}
\item $\alpha_1\geq \alpha_2\geq \cdots\geq\alpha_i\geq \cdots\geq 0$;
\item $\beta_1\geq\beta_2\geq\cdots\geq\beta_i\geq\cdots\geq 0$;
\item $\sum_{i=1}^{\infty}\alpha_i+\sum_{i=1}^{\infty}\beta_i\leq 1$;
\end{itemize}
and such that $\forall g\in G$, 
\[\chi(g)=\prod_{n=2}^{\infty}p_n^{k_n(g)},~\text{where}~p_n=\sum_{i=1}^{\infty}\alpha_i^n+(-1)^{n+1}\sum_{i=1}^{\infty}\beta_i^n.\]
In these products, $k_n(g)$ is the number of cycles of length $n$ in the cyclic decomposition of the permutation $g$ and $p_n^0$ is always taken to be 1, including the case $s_n=0$.

Set $s=(1~2~3)$, the 3-cycle. Then $k_n(s)=1$ if $n=3$ and is zero elsewhere. Hence $\chi(s)=p_3=\sum_{i=1}^{\infty}\alpha_i^3+\sum_{i=1}^{\infty}\beta_i^3\geq 0$ for all indecomposable $\chi$. Moreover, $\chi(s)=0$ iff $\alpha_i=0=\beta_i$ for all $i$, in which case, $\chi=\delta_e$. 

Now, let us check that for every character $\phi\neq\delta_e$ on $G$, $\phi(s)>0$. We may write $\phi=\int_{\IChar(G)}\chi d\mu(\chi)$ for some probability measure $\mu$ on $\IChar(G)$ (see, for example, \cite[Chapter IV, Theorem 8.21]{TakI}). Then $\phi(s)=\int_{\IChar(G)}\chi(s) d\mu(\chi)\geq 0$. If $\phi(s)=0$, then for $\mu$-a.e. $\chi$, we have $\chi(s)=0$. Hence $\chi=\delta_e$ for  $\mu$-a.e. $\chi$. Therefore, $\phi=\delta_e$. This  contradicts to our assumptions and implies that $G$ has non-factorizable regular character.

Case 2: $G\not\cong Alt(\mathbb{N})$. 

By \cite{thomas_etds}, we know that every indecomposable character $\chi\neq\delta_e$ arises from some ergodic IRS on $G$, \ie, there exists some $\mu\in \IRS^e(G)$ such that $\chi=\chi_{\mu}$ (see \eqref{eq: chi_mu definition}). In view of \cite[Definition 3.9]{tt_algebra}, we may split this case by considering whether $G$ is of linear or sublinear natural orbit growth. Moreover, by \cite[Theorem 3.7]{tt_algebra}, we may further assume that $G$ has almost diagonal type. 
Indeed, otherwise, $\IRS^e(G)=\{\delta_{\{e\}}, \delta_{\{G\}}\}$ and hence  $\IChar(G)=\{\chi_{\delta_{\{e\}}},\chi_{\delta_{\{G\}}}\}=
\{\delta_e,\textbf{1}_G\}$. Hence, $G$ is character rigid and the proof is finished by citing the previous item.

Subcase 1: $G$ has linear natural orbit growth.

In this subcase, Tucker-Drob and Thomas proved in \cite[Theorem 3.19]{tt_algebra} that   $IRS^e(G)=\{\delta_e,\delta_G,\nu_r:~r\geq 1\}$, where $G\curvearrowright (\Delta,m)$ is some  canonical  ergodic (in fact weakly mixing \cite[Theorem 2.14]{tt_algebra}) action  and $\nu_r$ is the associated stabilizer IRS for the diagonal action $G\curvearrowright (\Delta^{\otimes r}, m^{\otimes r})$. It is routine to check that $\chi_{\nu_r}(g)=m^r(\{(x_1,\ldots,x_r):~g\in \cap_{i=1}^rFix(x_i)\})=(m(\{x:~g\in Fix(x)\}))^r$ for all $g\in G$. Note that $\nu_r=\nu^r$ for all $r\geq 1$.

Since $\nu\not\in\{\delta_{\{e\}},\delta_{\{G\}}\}$ (as we assumed that $G$ has almost diagonal type), we may find some $e\neq s$ such that $0<\chi_{\nu}(s)<1$. Indeed, assume not, then $\chi_{\nu}=\delta_H$ for some normal subgroup $H\lhd G$. Since $G$ is simple, we deduce that $H=\{e\}$ or $H=G$. Recall that $\chi_{\nu}=m(\{x:~g\in Fix(x)\})=m(Fix(g))$. One can check that $\chi_{\nu}=\delta_{e}$ implies that $G\curvearrowright (\Delta, m)$ is essentially free, thus
$\nu=\delta_{\{e\}}$. In the case that $\chi_{\nu}=\delta_{G}$, the action $G\curvearrowright (\Delta,m)$ is trivial, hence $\nu=\delta_{\{G\}}$. In either case, we get a contradiction.

Next, we show that $\phi(s)>0$ for all character $\delta_e\not\equiv \phi$ on $G$.

Write $\phi=\lambda\delta_e+\mu \textbf{1}_G+(1-\lambda-\mu)\int_{r\geq 1}\chi_{\nu^r}d\theta(r)$ for some non-negative numbers $\lambda$, $\mu$ and some probability measure $\theta$ on $\{r\in\mathbb{N}:~r\geq 1\}$.
Without loss of generality, we may assume $\mu=0,\lambda<1$ and hence $1-\lambda-\mu>0$. Then, assume that $\phi(s)=0$, then $\chi_{\nu^r}(s)=0$ for $\mu$-a.e. $r$. In particular, $0=\chi_{\nu^r}(s)=\chi_{\nu}^r(s)$ for some $r\geq 1$. Thus, $\chi_{\nu}(s)=0$, which yields a contradiction to our choice of $s$. Therefore, we have shown that $\phi(s)>0$ if $\phi\not\equiv \delta_e$.

Subcase 2: $G$ has sublinear natural orbit growth.

By the proof of \cite[Theorem 3.21]{tt_algebra}, we may write $G=\cup_{\ell\in\mathbb{N}}G(\ell)$, where $\{G(\ell):~\ell\in\mathbb{N}\}$ is a non-decreasing $L(Alt)$-group with linear natural orbit growth. 

Fix any characters $\phi\neq\delta_e\neq\phi'$ on $G$, we may restrict them on subgroups $G(\ell)$ and hence get that there exists some $\ell\geq 1$ such that $\phi|_{G(\ell)}\neq \delta_e\neq \phi'|_{G(\ell)}$. 
So apply the previous proof to  this $G(\ell)$ to get some $s\in G(\ell)$ such that $\phi(s)>0$ and $\phi'(s)>0$. 
\end{proof}

\begin{remark}
Note that in all items except the subcase 2 in item 2, we actually proved a priori stronger property: 
there exists some non-trivial element $s\in G$ such that $\phi(s)>0$ for any character $\phi$ on $G$ with $\phi\not\equiv \delta_e$.
\end{remark}

\noindent For examples of character rigid groups, one may check \cite[Theorem 2.9]{dm_ggd2014}, \cite[Example 15.F.11]{bd_book} or any infinite groups with only two conjugacy classes as constructed in \cite{osin_annals}.

Motivated by the conditions used in \cite[Theorem 2.11]{dm_ggd2014}, we present more groups with non-factorizable regular characters built from simple groups. For concrete examples satisfying these conditions, see \cite[Lemma 3.5]{dm_ggd2014}.

\begin{proposition}\label{prop: examples of NFRC-relative ICC extension of simple groups}
 Let $G$ be a group and $R$ be an ICC subgroup of $G$ such that
 \begin{itemize}
 \item[(i)] $R$ has non-factorizable regular characters;
 \item[(ii)] for every $g\in G\setminus\{e\}$, there exists a sequence of distinct elements $\{g_i\}_{i\geq 1}\subset C_R(g)$ such that $g_i^{-1}g_j\in R$ for any $i, j$, where $C_R(g)=\{rgr^{-1}: r\in R\}$.
 \end{itemize}
Then $G$ has non-factorizable regular character.
 \end{proposition}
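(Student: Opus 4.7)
The plan is to reduce the problem directly to Lemma \ref{lemma: vanishing character from many conjugates}, using $R$ as a testing subgroup. Suppose $\phi,\psi$ are two characters on $G$ with $\phi(g)\psi(g)=0$ for every $e\neq g\in G$. I would first restrict both to $R$: the restrictions $\phi|_R$ and $\psi|_R$ remain characters (positive definiteness, conjugation invariance by $R\subseteq G$, and normalization all survive), and they still satisfy $\phi|_R(r)\psi|_R(r)=0$ for all $e\neq r\in R$. Hypothesis (i) then forces one of them, say $\phi|_R$, to equal $\delta_e$ on $R$. The remaining task is to bootstrap this vanishing from $R$ out to all of $G$, i.e., to deduce $\phi\equiv\delta_e$ on $G$.

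To that end, fix an arbitrary $e\neq g\in G$ and aim for $\phi(g)=0$. Hypothesis (ii) supplies a sequence $\{g_i\}_{i\geq 1}$ of pairwise distinct elements of $C_R(g)$ with $g_i^{-1}g_j\in R$ for all $i,j$. Each $g_i$ is an $R$-conjugate of $g$, hence in particular a $G$-conjugate, so any initial segment $g_1,\ldots,g_n$ supplies exactly the $n$ conjugates of $g$ demanded by Lemma \ref{lemma: vanishing character from many conjugates}. What remains is to check that the pairwise products $\phi(g_ig_j^{-1})$ vanish for $i<j$; once this is done, the lemma applied with $\epsilon=0$ (or for arbitrary $\epsilon>0$ after letting $n\to\infty$) immediately yields $\phi(g)=0$, and since $g\neq e$ was arbitrary, $\phi\equiv\delta_e$ on $G$, as required.

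The one piece of bookkeeping that must be handled carefully is the mismatch between hypothesis (ii), which controls $g_i^{-1}g_j$, and the lemma, which needs control of $g_ig_j^{-1}$. This is closed by the trace-like identity $\phi(ab)=\phi(ba)$, which is just conjugation invariance applied to $b(ab)b^{-1}=ba$. Using it with $a=g_i$, $b=g_j^{-1}$ gives $\phi(g_ig_j^{-1})=\phi(g_j^{-1}g_i)$, and $g_j^{-1}g_i=(g_i^{-1}g_j)^{-1}\in R$ by (ii); by distinctness of the $g_k$, this element is nontrivial whenever $i\neq j$, so $\phi|_R\equiv\delta_e$ makes it zero.

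I expect the whole argument to be short and essentially formal. There is no genuine obstacle to identify; if anything is a "hard part," it is only recognizing that Lemma \ref{lemma: vanishing character from many conjugates} is the right tool and that the $g_i^{-1}g_j$ versus $g_ig_j^{-1}$ discrepancy is cosmetic. Structurally, the argument is parallel to the preceding proposition for finite-index inclusions, where infinitely many conjugates inside a fixed coset automatically have ratios in $H$; here hypothesis (ii) axiomatizes exactly that feature at the level of the potentially much larger subgroup $R$.
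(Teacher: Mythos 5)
Your proposal is correct and follows essentially the same route as the paper's proof: restrict $\phi,\psi$ to $R$, invoke the non-factorizable regular character property of $R$ to assume $\phi|_R\equiv\delta_e$, and then apply Lemma \ref{lemma: vanishing character from many conjugates} to the $R$-conjugates supplied by hypothesis (ii) to conclude $\phi(g)=0$ for every $g\neq e$. Your explicit handling of the $g_i^{-1}g_j$ versus $g_ig_j^{-1}$ discrepancy via conjugation invariance is a point the paper passes over silently, and is a welcome clarification rather than a deviation.
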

 \noindent Examples of groups $R$ satisfying the property (i) in Proposition \ref{prop: examples of NFRC-relative ICC extension of simple groups} are groups having no proper characters in the sense of \cite[Def. 2.4]{dm_ggd2014} (equivalently, character rigid groups).

\begin{proof}[Proof of Proposition \ref{prop: examples of NFRC-relative ICC extension of simple groups}]
Notice that $R$ is infinite by condition (ii) and the ICC assumption. In addition, condition (ii) implies that $R\subseteq G$ is relative ICC, in particular, $G$ is ICC and hence has trivial center.

Let $\phi$ and $\psi$ be any two characters on $G$ with $\phi(g)\psi(g)=0$ for all $e\neq g\in G$. We aim to show $\phi\equiv \delta_e$ or $\psi\equiv \delta_e$. By considering $\phi|_R$ and $\psi|_R$, we deduce that either $\phi|_R\equiv \delta_e$ or $\psi|_R\equiv\delta_e$ since $R$ has non-factorizable regular characters. Without loss of generality, we may assume that $\phi|_R\equiv \delta_e$.

Take any non-trivial element $g\in G$. From condition (ii) we deduce that $\phi(g_i^{-1}g_j)=0$ for all $g_i\in C_R(g)$ with $i\neq j$. Thus $\phi(g)=0$ by Lemma \ref{lemma: vanishing character from many conjugates}. Therefore, $\phi=\delta_e$, which finishes the proof.
\end{proof}

\subsection{Non-factorizable regular characters for charfinite groups}
Using this character approach, we may recover a special case for the result in \cite{kp}.


\begin{proposition}\label{prop: SLnZ}
Let $G=SL_n(\mathbb{Z})$ for odd $n\geq 3$. Then $G$ has non-factorizable regular character.
\end{proposition}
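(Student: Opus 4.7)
The plan is to combine two classical facts about $G=SL_n(\mathbb{Z})$ for $n\geq 3$: Bekka's character rigidity theorem, saying that $G$ is charfinite, i.e.\ every indecomposable character on $G$ is either $\delta_e$ or factors through a finite quotient of $G$; and the Bass--Milnor--Serre congruence subgroup property, saying that every finite-index subgroup of $G$ contains a principal congruence subgroup $\Gamma(m):=\ker(G\to SL_n(\mathbb{Z}/m\mathbb{Z}))$. For odd $n$, $\det(-I)=(-1)^n=-1$, so $-I\notin G$, the center is trivial and $G$ is ICC, although strictly speaking the ICC-ness is not needed in the statement.

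Given two characters $\phi,\psi$ on $G$ with $\phi(s)\psi(s)=0$ for all $s\neq e$, the first step is to peel off the Dirac component. Arguing by contradiction, suppose neither equals $\delta_e$. By the Choquet decomposition one writes $\phi=\alpha\delta_e+(1-\alpha)\phi_f$ with $\alpha=\nu_\phi(\{\delta_e\})\in[0,1)$ and $\phi_f=\int_{\IChar(G)\setminus\{\delta_e\}}\chi\,d\nu_\phi(\chi)$, and similarly $\psi=\beta\delta_e+(1-\beta)\psi_f$ with $\beta<1$. For $s\neq e$ we have $\phi(s)\psi(s)=(1-\alpha)(1-\beta)\phi_f(s)\psi_f(s)$, so the hypothesis reduces to $\phi_f(s)\psi_f(s)=0$ on $G\setminus\{e\}$.

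The key step is to organize the supporting indecomposable characters by their ``level''. For each $m\geq 1$ set
\[A_m=\{\chi\in\IChar(G)\setminus\{\delta_e\}:\chi|_{\Gamma(m)}\equiv 1\},\]
i.e.\ the indecomposable characters factoring through $SL_n(\mathbb{Z}/m\mathbb{Z})$ (recall that any $\chi$ coming from a finite-dimensional representation of a finite quotient $G/N$ is identically $1$ on $N$). Each $A_m$ is closed in the pointwise topology, $A_m\subseteq A_{m'}$ whenever $m\mid m'$, and by Bekka's theorem combined with CSP the union $\bigcup_m A_m$ covers the full support of both $\nu_\phi$ and $\nu_\psi$. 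Hence $\nu_\phi(A_{k!}),\nu_\psi(A_{k!})\to 1$ as $k\to\infty$, and I fix $m$ with $\nu_\phi(A_m),\nu_\psi(A_m)>1/2$. For any $s\in\Gamma(m)\setminus\{e\}$, which is nonempty since $\Gamma(m)$ has finite index in the infinite group $G$, splitting
\[\phi_f(s)=\nu_\phi(A_m)\cdot 1+\int_{A_m^c}\chi(s)\,d\nu_\phi(\chi)\]
and using $|\chi(s)|\leq 1$ yields $|\phi_f(s)|\geq 2\nu_\phi(A_m)-1>0$, and analogously $|\psi_f(s)|>0$, contradicting $\phi_f(s)\psi_f(s)=0$.

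The main obstacle is setting up this ``dominant level'' bookkeeping cleanly: one has to verify measurability of $A_m$ in $\IChar(G)$, the monotonicity along divisibility, and the precise statement that charfiniteness plus CSP makes $\{A_m\}$ exhaustive in the support of every non-$\delta_e$ character. Once those are in hand the estimate is elementary. I expect the paper either to execute this argument directly, or to derive Proposition \ref{prop: SLnZ} as a special case of the abstract criterion for charfinite groups encoded in Proposition \ref{prop: examples of NFRC-charfinite groups_1} or Proposition \ref{prop: examples of NFRC-charfinite groups_2}.
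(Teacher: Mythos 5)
Your proposal is correct and is essentially the paper's own argument: both rest on Bekka's classification of indecomposable characters of $SL_n(\mathbb{Z})$ (every non-Dirac one factors through a congruence quotient), decompose $\phi,\psi$ over the indecomposables, capture more than half of the non-Dirac mass at finitely many congruence levels, pick a non-trivial element in the corresponding principal congruence subgroup where all those characters equal $1$, and conclude via the same estimate $|\phi(g)|\geq 2(\text{captured mass})-(\text{total mass})>0$ for both characters. The only cosmetic difference is bookkeeping: the paper stratifies by exact level $\ICh_N(G)$ and truncates at a large $M$, while you use the increasing exhaustion $A_{k!}$ with mass $>1/2$; the measurability and exhaustiveness points you flag are verified in the paper in the same way.
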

\begin{proof}
By \cite[Theorem 3]{bekka}, every indecomposable character on $G$ is exactly one of the following two types:

(i) either it is the indecomposable character of an irreducible finite dimensional representation of some congruence quotient $SL_n(\mathbb{Z}/N\mathbb{Z})$ for some $N\geq 1$, or

(ii) it is $\delta_e$.

\noindent
Recall that any character $\psi$ on $G$ can be written as an integral of indecomposable ones with respect to some Borel probability measure on $\IChar(G)$:
$$\psi(g)=\int_{\IChar(G)}\chi(g)d\mu_(\chi).$$ Let $\ICh_N(G)$ denote the set of all indecomposable characters obtained by an irreducible finite dimensional representation which factors through $SL_n(\mathbb{Z}/{N\mathbb{Z}})$, but not $SL_n(\mathbb{Z}/{N'\mathbb{Z}})$ for any $N'<N$. Observe that for any $N\in\mathbb N$ the subset $\ICh_N(G)\subset\IChar(G)\setminus\{\delta_e\}$ can be written as a countable combination of unions and differences of subsets of the form $\ICh_g(G)=\{\chi\in\IChar(G)\setminus\{\delta_e\}:\chi(g)=1\}.$ Therefore, $\ICh_N(G)$ is a measurable subset of $\IChar(G)$.

Now, let $\phi,\psi$ be any two characters on $SL_n(\mathbb{Z})$.
For any $g\in G$, we may write 
\begin{align*}
\phi(g)&=\sum_{N=1}^{\infty}c_N\int_{\ICh_N(G)}\chi(g)d\mu_N(\chi)+d\delta_e(g),\\
\psi(g)&=\sum_{N=1}^{\infty}c_N'\int_{\ICh_N(G)}
\chi(g)d\nu_N(\chi)+d'\delta_e(g).
\end{align*}
Here, $\mu_N,\nu_N$ are probability measures on $\ICh_N(G)$ with $\sum_{N=1}^{\infty}c_N+d=1=\sum_{N=1}^{\infty}c_N'+d'$ and $c_N, c_N', d, d'\geq 0$. Assume that $\phi\not\equiv \delta_e$ and $\psi\not\equiv \delta_e$, then $d<1$ and $d'<1$; equivalently, $c:=\sum_{N=1}^{\infty}c_N>0$ and $c':=\sum_{N=1}^{\infty}c_N'>0$.

Pick a sufficiently large $M$ such that $\sum_{N=1}^Mc_N>\frac{c}{2}$ and $\sum_{N=1}^Mc_N'>\frac{c'}{2}$. Denote by $\pi_N: SL_n(\mathbb{Z})\twoheadrightarrow SL_n(\mathbb{Z}/{N\mathbb{Z}})$ the natural epimorphism. Then note that $[G: Ker(\pi_N)]<\infty$ and $\cap_{N=1}^MKer(\pi_N)=Ker(\pi_{lcm(1,\ldots,N)})$. Hence, $[G:\cap_{N=1}^MKer(\pi_N)]<\infty$, in particular, $\cap_{N=1}^MKer(\pi_N)\neq \{e\}$.

Pick any $e\neq g\in \cap_{N=1}^MKer(\pi_N)$. Observe that $\chi(g)=1$ for all $\chi\in \ICh_N(G)$ and  all $1\leq N\leq M$. Thus, $\phi(g)=\sum_{N=1}^Mc_N+\sum_{N=M+1}^{\infty}c_N\int_{\ICh_N(G)}\chi(g)d\mu_N(g)$ and hence
\begin{align*}
|\phi(g)|&\geq \sum_{N=1}^Mc_N-\sum_{N=M+1}^{\infty}\int_{\ICh_N(G)}c_N|\chi(g)|d\mu_N(g)\\
&\geq \sum_{N=1}^Mc_N-\sum_{N=M+1}^{\infty}\int_{\ICh_N(G)}c_N1d\mu_N(g)\\
&=2\sum_{N=1}^Mc_N-c>0.
\end{align*}
Hence $\phi(g)\neq 0$. Similarly, $\psi(g)\neq 0$. Hence $\phi(g)\psi(g)\neq 0$.
\end{proof}

Let $\Gamma$ be a countable discrete group.
Denote by $Rad(\Gamma)$ the amenable radical of $\Gamma$, that is, the largest amenable normal subgroup. 
Let $\mathcal{P}(\Gamma)\subset \ell^{\infty}(\Gamma)$ be the convex set consisting of all normalized positive definite functions equipped with the weak$^*$-topology. Note that $\Gamma$ acts on $\mathcal{P}(\Gamma)$ by conjugation. Note that a fixed point of this conjugation action is precisely a character. Following \cite[Definition 1.1]{bbhp}, a character is called {\it amenable} if the corresponding GNS representation $(\pi, H)$ is amenable in the sense of Bekka \cite{Be89}, \ie $\pi\otimes \bar{\pi}$ weakly contains the trivial representation. It is called {\it von Neumann amenable} if moreover $\pi(\Gamma)''$ is an amenable von Neumann algebra. It is called {\it finite} if $H$ is finite dimensional.

Recall that in \cite[Definition 1.2]{bbhp}, the authors introduced the concept of charfinite groups, defined as follows.

\begin{definition}\label{def: charfinite groups}
A countable discrete group $\Gamma$ is called {\it charfinite} if it satisfies the following conditions:
\begin{enumerate}
    \item Every non-empty $\Gamma$-invariant compact convex subset $\mathcal{C}\subset\mathcal{P}(\Gamma)$ contains a fixed point with respect to the conjugation action, that is, a character.
    \item Every extremal character $\phi\in {\rm Char}(\Gamma)$ is either supported on $Rad(\Gamma)$ or von Neumann amenable.
    \item $Rad(\Gamma)$ is finite.
    \item $\Gamma$ has a finite number of isomorphism classes of unitary representations in each given finite dimension.
    \item Every amenable extremal character is finite.
\end{enumerate}
\end{definition}
\noindent Note that charfinite groups satisfy Margulis's normal subgroup theorem \cite[Proposition 3.3]{bbhp}, \ie any normal subgroup $N\lhd \Gamma$ is either finite or has finite index.
Examples of charfinite groups are established in \cites{bh1,bbhp,bbh}.

Although we would not need it in this paper, we make the following remark concerning charfinite groups.
\begin{remark}\label{remark: on KP's proof of ISR for modified charfinite groups}
    The proof of \cite[Theorem 1.1]{kp} actually shows that every \emph{modified charfinite group} with trivial amenable radical has 
the ISR property. Here, by a modified charfinite group, we mean a group $\Gamma$ satisfying all conditions of Definition \ref{def: charfinite groups} with the condition (2) replaced by the following stronger condition:

$(2')$ If $\pi: \Gamma\rightarrow U(N)$ is a projective representation such that $\pi(\Gamma)''$ is a finite factor, then $\pi(\Gamma)''$ is finite dimensional or $\pi$ extends to an isomorphism of $L(\Gamma)\cong \pi(\Gamma)''$, which implies $\pi(g)$ has trace zero for all $e\neq g$ in this case.

In other words, for a modified charfinite group we need condition $(2)$ to hold for all projective unitary representations instead of unitary representations. It is not clear whether this is equivalent to the original condition. While as commented in \cite{kp}, some known charfinite groups, e.g., \cite[Theorem C]{bh1} do satisfy this modified charfinite property. In fact, it might be possible to check that the proofs in the series of papers on establishing charfinite property in \cites{bh1, bbh, bbhp} still work for projective unitary representations.
\end{remark}

In view of Remark \ref{remark: on KP's proof of ISR for modified charfinite groups} and the fact that charfinite groups with trivial amenable radical are C$^*$-simple, it is natural to ask:
\begin{question}
Do all charfinite groups with trivial amenable radical have non-factorizable regular characters?
\end{question}
\noindent In particular, it is interesting whether it is possible to extend the proof showing that $SL_n(\mathbb{Z}), n\geq 3$, has non-factorizable regular character to other charfinite groups.

In fact, the proof of Proposition \ref{prop: SLnZ} relies on the following property of groups:
every finite dimensional unitary representation of $\Gamma$ has finite image and hence factors through a finite quotient. With this property at hand, we can repeat the proof there to deduce the following result.
\begin{proposition}\label{prop: examples of NFRC-charfinite groups_1}
Let $\Gamma$ be a charfinite group satisfying the following conditions:
\begin{enumerate}
    \item The amenable radical of $\Gamma$ is trivial: $Rad(\Gamma)=\{e\}$;
    \item every finite dimensional unitary representation of $\Gamma$ has finite image.
\end{enumerate}
Then $\Gamma$ has non-factorizable regular character.
\end{proposition}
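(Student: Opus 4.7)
The plan is to transcribe the proof of Proposition \ref{prop: SLnZ} to our more general setting, with ``congruence quotients of $SL_n(\mathbb{Z})$'' replaced by ``arbitrary finite quotients of $\Gamma$''. The argument hinges on a preliminary structural step which I would carry out first: under the given hypotheses, every extremal character $\chi\neq\delta_e$ on $\Gamma$ factors through a finite quotient. Indeed, charfiniteness axiom (2) in Definition \ref{def: charfinite groups} says $\chi$ is either supported on $Rad(\Gamma)$ or von Neumann amenable; since $Rad(\Gamma)=\{e\}$ and $\chi(e)=1$, support on $Rad(\Gamma)$ would force $\chi=\delta_e$. Hence $\chi$ is von Neumann amenable, in particular amenable, so axiom (5) makes $\chi$ \emph{finite}, i.e., its GNS representation $\pi_\chi$ is finite-dimensional. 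Hypothesis (2) of the proposition then forces $\pi_\chi(\Gamma)$ to be finite, and $\chi$ factors through $\Gamma/\ker(\pi_\chi)$. We may assume $\Gamma\neq\{e\}$; then $Rad(\Gamma)=\{e\}$ makes $\Gamma$ infinite, and the Margulis-type property of charfinite groups noted below Definition \ref{def: charfinite groups} forces every non-trivial normal subgroup of $\Gamma$ to have finite index.

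With this in hand, I would enumerate the countable family $\{N_k\}_{k\geq 1}$ of non-trivial finite-index normal subgroups of $\Gamma$ and define
\[
\ICh_{N_k}(\Gamma)=\{\chi\in\IChar(\Gamma)\setminus\{\delta_e\}:\chi\equiv 1\text{ on }N_k,\text{ but }\chi\not\equiv 1\text{ on }N_j\text{ for any }j<k\}.
\]
Each $\ICh_{N_k}(\Gamma)$ is Borel (being a countable Boolean combination of sets of the form $\{\chi:\chi(g)=1\}$), and by the structural reduction $\IChar(\Gamma)\setminus\{\delta_e\}=\bigsqcup_{k\geq 1}\ICh_{N_k}(\Gamma)$. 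For any two characters $\phi,\psi\not\equiv\delta_e$ on $\Gamma$, the Choquet-type integral representation used in the proof of Proposition \ref{prop: SLnZ} yields
\[
\phi=d\,\delta_e+\sum_{k\geq 1}c_k\int_{\ICh_{N_k}(\Gamma)}\chi\,d\mu_k(\chi),\qquad \psi=d'\,\delta_e+\sum_{k\geq 1}c_k'\int_{\ICh_{N_k}(\Gamma)}\chi\,d\nu_k(\chi),
\]
with $c:=\sum_k c_k>0$ and $c':=\sum_k c_k'>0$. I would then pick $M$ large enough that $\sum_{k=1}^M c_k>c/2$ and $\sum_{k=1}^M c_k'>c'/2$, and set $N:=\bigcap_{k=1}^M N_k$, a non-trivial (finite-index) normal subgroup of the infinite group $\Gamma$. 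For any $e\neq g\in N$, every $\chi\in\ICh_{N_k}(\Gamma)$ with $k\leq M$ satisfies $\chi(g)=1$, so the triangle inequality gives
\[
|\phi(g)|\geq \sum_{k=1}^M c_k-\sum_{k>M}c_k>0,
\]
and likewise $|\psi(g)|>0$, contradicting $\phi(g)\psi(g)=0$.

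The main obstacle is the structural first step: extracting ``$\chi$ factors through a finite quotient'' from the combination of charfiniteness axioms (2) and (5) together with our two standing hypotheses requires carefully tracing through several definitions, and one must also rule out the trivial-group degeneracy. Once that is in place, the remainder is a routine truncation estimate that parallels the $SL_n(\mathbb{Z})$ case line for line, with ``finite-index normal subgroup of $\Gamma$'' replacing ``principal congruence subgroup of $SL_n(\mathbb{Z})$''.
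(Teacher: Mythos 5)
Your structural step (axioms (2) and (5) of Definition \ref{def: charfinite groups} plus the two standing hypotheses force every extremal character $\neq\delta_e$ to factor through a finite quotient) and your truncation estimate are exactly the paper's argument; the one place where your write-up has a real gap is the sentence ``enumerate the countable family $\{N_k\}_{k\geq 1}$ of non-trivial finite-index normal subgroups of $\Gamma$''. For a general countable group this family can be uncountable (e.g.\ $\oplus_{\mathbb N}\mathbb Z/2\mathbb Z$ has uncountably many index-$2$ subgroups), and your entire bookkeeping --- the countable Borel partition of $\IChar(\Gamma)\setminus\{\delta_e\}$ and the countable sum in the Choquet decomposition --- rests on this countability. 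It does hold under your hypotheses, but only because of axiom (4) of Definition \ref{def: charfinite groups}, which you never invoke: for each $n$ there are finitely many isomorphism classes of $n$-dimensional unitary representations, isomorphic representations have the same kernel, and every normal subgroup of index $n$ is the kernel of the pull-back of the regular representation of the (order-$n$) quotient, so there are only finitely many finite-index normal subgroups of each index, hence countably many in all; alternatively, restrict your enumeration to the countably many kernels of finite-dimensional irreducible representations, which is all you actually need. With that line added, your proof is complete.

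Apart from this, your route coincides with the paper's proof of Proposition \ref{prop: examples of NFRC-charfinite groups_1}: the paper indexes the decomposition by the dimension $N$ of the irreducible representation (its $\ICh_N(\Gamma)$ consists of indecomposable characters of irreducible representations on $\mathbb C^N$) rather than by kernels, and then invokes axiom (4) together with hypothesis (2) at the truncation stage to see that $\Lambda=\bigcap_{1\le N\le M}\bigcap_{\chi\in\ICh_N(\Gamma)}\ker(\pi_\chi)$ is an intersection of only finitely many finite-index kernels, hence a non-trivial finite-index normal subgroup; the final estimate $|\phi(g)|\ge 2\sum_{N\le M}c_N-c>0$ for $e\neq g\in\Lambda$, and likewise for $\psi$, is identical to yours. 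So the difference is only in where axiom (4) enters the bookkeeping (per dimension in the paper, versus once and for all in your enumeration), not in the mathematics.
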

\begin{proof}
Let $\phi,\psi$ be two characters on $\Gamma$. Assume that $\phi\not\equiv \delta_e$ and $\psi\not\equiv \delta_e$, we aim to show that there exists some $e\neq g\in \Gamma$ such that $\phi(g)\psi(g)\neq 0$.

In view of item (2) in the definition of charfinite groups and since $Rad(\Gamma)=\{e\}$, we may write that
\begin{align*}
    \phi=d\delta_e+\sum_{N\geq 1}c_N\int_{\ICh_N(\Gamma)} \chi(g)d\mu_N(\chi),\\
    \psi=d'\delta_e+\sum_{N\geq 1}c_N'\int_{\ICh_N(\Gamma)} \chi(g)d\nu_N(\chi).
\end{align*}
Here $\ICh_N(\Gamma)$ denotes the set of all indecomposable characters obtained by irreducible finite dimensional representations on $\mathbb{C}^N$, and $\mu_N,\nu_N$ are probability measures on $\ICh_N(\Gamma)$ with $\sum_{N=1}^{\infty}c_N+d=1=\sum_{N=1}^{\infty}c_N'+d'$ and $c_N, c_N', d, d'\geq 0$.

Since $\phi\not\equiv\delta_e$ and $\psi\not\equiv \delta_e$, we get that $d<1$ and $d'<1$. Equivalently, $c:=\sum_{N=1}^{\infty}c_N>0$ and $c':=\sum_{N=1}^{\infty}c_N'>0$.

Pick a sufficiently large $M$ such that $\sum_{N=1}^Mc_N>\frac{c}{2}$ and $\sum_{N=1}^Mc_N'>\frac{c'}{2}$. By our assumption, every unitary representation $\pi_{\chi}$ corresponding to $\chi\in \ICh_N(\Gamma)$ for all $1\leq N\leq M$ has finite image and by item (4) in the definition of charfinite groups (since isomorphic unitary representations have equal kernel and the intersection of finitely many finite index subgroups still has finite index), we deduce that $\Lambda:= \cap_{1\leq N\leq M}\cap_{\chi\in \ICh_N(\Gamma)}ker(\pi_{\chi})$ is a finite index non-trivial normal 
 subgroup in $\Gamma$. 

Fix any $e\neq g\in \Lambda$. Observe that $\chi(g)=1$ for all $\chi\in \ICh_N(\Gamma)$ for all $1\leq N\leq M$. Thus, $\phi(g)=\sum_{N=1}^Mc_N+\sum_{N=M+1}^{\infty}\int_{\ICh_N(\Gamma)}\chi(g)d\mu_N(g)$ and hence
\begin{align*}
    |\phi(g)|&\geq \sum_{N=1}^Mc_N-\sum_{N=M+1}^{\infty}c_N|\chi(g)|d\mu_N(g)\\
    &\geq \sum_{N=1}^Mc_N-\sum_{N=M+1}^{\infty}\int_{\ICh_N(\Gamma)}c_N1 d\mu_N(g)\\
    &=2\sum_{N=1}^Mc_N-c>0.
\end{align*}
Hence, $\phi(g)\neq 0$. Similarly, $\psi(g)\neq 0$.
\end{proof}

Concerning the condition on finite image in item (2) of Proposition \ref{prop: examples of NFRC-charfinite groups_1}, we make the following remarks.

1. Let $\Gamma$ be an infinite group $\Gamma$ with property (T) and let $\pi: \Gamma\rightarrow U(n)$ be a finite dimensional unitary representation. It may happen that $\pi(\Gamma)$ is infinite. Indeed, notice that  $SO(n, \mathbb{R})\subset U(n)$. Margulis \cite{mar1} and Sullivan \cite{sul} independently proved that $SO(n, \mathbb{R})$ contains countable infinite groups, with property (T) for $n\geq 5$. For instance, in Margulis's approach, one can take $\Gamma=SO(n, \mathbb{Z}(\frac{1}{5}))$, the subgroup of matrices in $SO(n, \mathbb{R})$ whose entries belong to $\mathbb{Z}(\frac{1}{5})$, the subring of $\mathbb{Q}$ generated by $\frac{1}{5}$. For other examples, see e.g., \cite[Example 1]{rap}.

2. To check item (4) in Definition \ref{def: charfinite groups} for various classes of groups, the authors in \cites{bh1, bbhp, bbh} used two approaches. If $\Gamma$ has property (T) then item (4) holds by Wang's classical result, e.g., \cite[Theorem 2.6]{wan}. Another approach is showing directly that every finite dimensional (irreducible) unitary representation of $\Gamma$ has finite image, see e.g., the proof of \cite[Proposition 7.1]{bbhp}.

3. In \cite[Proposition 4.C.14]{bd_book}, it was shown that for a discrete group $\Gamma$, the image of every finite dimensional unitary representation is finite iff the continuous surjective homomorphism $c_P^B: \text{Bohr}(\Gamma)\twoheadrightarrow \text{Prof}(\Gamma)$ is an isomorphism, where $\text{Bohr}(\Gamma)$ denotes the Bohr compactification, see \cite[Theorem 4.C.3]{bd_book} and $\text{Prof}(\Gamma)$ denotes the profinite completion, see \cite[Theorem 4.C.11]{bd_book}. It seems that this property is very rare. 

Below, we present another more direct approach by using \cite[Proposition 14]{bekka-23}, which we recall below.

\begin{proposition}\label{prop: distorted elelment maps to torsion}
Let $\Gamma$ be a finitely generated group and $\gamma\in\Gamma$ be a distorted element. Then, for every finite dimensional unitary representation $\pi: \Gamma\rightarrow U(N)$ of $\Gamma$, the matrix $\pi(\gamma)\in U(N)$ has finite order.
\end{proposition}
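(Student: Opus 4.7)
The argument I would pursue starts from the Lipschitz continuity of $\pi$ with respect to the word metric on $\Gamma$. Fixing a finite symmetric generating set $S$ and setting $L := \max_{s\in S}\|\pi(s)-I\|_{op}$, an induction on word length based on $\pi(g_1 g_2) = \pi(g_1)\pi(g_2)$ together with unitarity of each $\pi(s)$ yields
\[
  \|\pi(g) - I\|_{op} \;\le\; L\,|g|_S \qquad \text{for all } g\in \Gamma.
\]
Specializing to $g = \gamma^n$ gives $\|\pi(\gamma)^n - I\|_{op} \le L\,|\gamma^n|_S$, which, since $\gamma$ is distorted, is $o(n)$. Diagonalizing $U := \pi(\gamma)$ with eigenvalues $e^{i\theta_1},\dots,e^{i\theta_N}$ on the unit circle, the estimate becomes $\max_j |e^{in\theta_j}-1| \le L\,|\gamma^n|_S$ for every $n\in\mathbb{N}$.

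It then suffices to show that each $\theta_j$ is a rational multiple of $2\pi$. As a first reduction, note that $\Gamma \twoheadrightarrow \Gamma^{\mathrm{ab}}$ is Lipschitz for the word metrics, and a distorted element must have torsion image in the free part of $\Gamma^{\mathrm{ab}}$, since otherwise its translation length in a free abelian quotient would be strictly positive. Hence some power $\gamma^k$ lies in $[\Gamma,\Gamma]$; replacing $\gamma$ by such a power (which is again distorted) we may assume $\det U = 1$. Next I would pass to the closure $H := \overline{\langle U\rangle}\subset U(N)$, a compact abelian Lie group isomorphic to $\mathbb{T}^r \times F$ with $F$ finite. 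If $r = 0$ then $U$ is torsion and we are done, so assume $r\ge 1$ toward a contradiction.

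To derive the contradiction, I would combine the sublinear-growth estimate on $\|U^n - I\|_{op}$ with a Zassenhaus/Jordan-type rigidity argument inside $U(N)$: there exists a Zassenhaus neighborhood $V$ of the identity in $U(N)$ in which any subgroup is nilpotent, and one hopes to use the Lipschitz inequality to push suitable iterates of $\pi(\gamma)$ into $V$ while exploiting density of $\langle U\rangle$ in the torus component of $H$ to force $U$ itself into $V$; once there, the structure of $V$ would confine $U$ to a subgroup whose only elements satisfying the uniform Lipschitz constraint are torsion.

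The main obstacle I anticipate is precisely this last step: the bound $|e^{in\theta_j}-1|\le L|\gamma^n|_S$ is automatically at most $2$, so the $o(n)$ growth alone is too crude and must be supplemented with genuinely structural input. A plausible alternative to the Zassenhaus route is an algebraic specialization argument: the matrix entries of $\pi(S)$ generate a finitely generated subfield $k \subset \mathbb{C}$, so the eigenvalues $\lambda_j$ of $U$ are algebraic over $k$; combining this with the distortion-Lipschitz estimate and a specialization argument, one would aim to show that the $\lambda_j$ are in fact algebraic integers on the unit circle, whence Kronecker's theorem forces them to be roots of unity and therefore $U$ to have finite order.
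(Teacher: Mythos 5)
Your argument stops exactly where the real content of the statement begins, so as it stands there is a genuine gap rather than a proof. The Lipschitz bound $\|\pi(\gamma)^n-I\|\le L\,|\gamma^n|_S$ carries no information, as you yourself note, because the bi-invariant metric on $U(N)$ is bounded by $2$; distortion of $\gamma$ therefore cannot be exploited through metric estimates inside the compact group $U(N)$ at all. The Zassenhaus-neighborhood idea cannot be repaired: whenever the closure of $\langle U\rangle$ has a positive-dimensional torus factor, arbitrarily high powers of $U$ enter every identity neighborhood regardless of any distortion hypothesis, so ``pushing iterates into $V$'' yields no constraint on $U$ itself. Your second route is the right one in spirit, but it is only gestured at, and the step you wave at is precisely the nontrivial one: the eigenvalues of $U$ are algebraic over the finitely generated field $k$ of matrix entries, which is far from making them algebraic numbers over $\Q$, so Kronecker's theorem is not yet applicable; moreover one must control \emph{all} absolute values (archimedean and non-archimedean, after specializing $k$ into number fields or local fields), not just the complex one.

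For comparison, the paper does not prove this proposition at all: it is quoted verbatim from Bekka (Proposition 14 of the cited preprint on the Bohr compactification), and the underlying mechanism goes back to Lubotzky--Mozes--Raghunathan. The known argument runs as follows: the entries of $\pi(\Gamma)$ lie in a finitely generated field $k$; for any place $v$ of (a specialization of) $k$ into a local field, submultiplicativity gives $\|\pi(\gamma^n)\|_v\le C^{\,|\gamma^n|_S}$, so if some eigenvalue of $\pi(\gamma)$ had absolute value $\neq 1$ at some place, then $|\gamma^n|_S$ would grow linearly, contradicting $t(\gamma)=0$; a specialization argument reduces the remaining case to algebraic numbers all of whose conjugates have absolute value $1$ at every place, and then Kronecker's theorem forces them to be roots of unity. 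Since $\pi(\gamma)$ is unitary, hence diagonalizable, having all eigenvalues roots of unity gives finite order. If you want a self-contained proof, you need to actually carry out this valuation/specialization step (or cite it); everything before it in your write-up, including the reduction to $\det U=1$, is dispensable.
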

Here, in a finitely generated group $\Gamma$ with a finite generating set $S$, an element $\gamma\in\Gamma$ is called distorted if the translation number $t(\gamma)=0$, where 
\begin{align*}
    t(\gamma)=\lim\inf_{n\to\infty}\frac{\ell_S(\gamma^n)}{n}.
\end{align*}

Clearly, we have
\begin{proposition}\label{prop: examples of NFRC-charfinite groups_2}
Let $\Gamma$ be a finitely generated charfinite group satisfying the conditions:
\begin{enumerate}
    \item $Rad(\Gamma)=\{e\}$;
    \item $\Gamma$ contains a distorted element with infinite order. 
\end{enumerate}
Then $\Gamma$ has non-factorizable regular character.
\end{proposition}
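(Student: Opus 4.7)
The plan is to adapt the proof of Proposition \ref{prop: examples of NFRC-charfinite groups_1}, replacing its hypothesis that every finite dimensional unitary representation has finite image with a weaker, \emph{bounded-dimension} version obtained via the distortion hypothesis together with Proposition \ref{prop: distorted elelment maps to torsion}. The only place the finite-image hypothesis was actually used in the proof of Proposition \ref{prop: examples of NFRC-charfinite groups_1} was to find a non-trivial element $g\in\Gamma$ lying in $\ker(\pi_\chi)$ simultaneously for every $\chi\in\bigcup_{N\leq M}\ICh_N(\Gamma)$, so it suffices to construct such a $g$ from the distorted element $\gamma$.

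First I would repeat the opening of the proof of Proposition \ref{prop: examples of NFRC-charfinite groups_1} essentially verbatim: take two characters $\phi,\psi$ on $\Gamma$ with $\phi\not\equiv\delta_e$ and $\psi\not\equiv\delta_e$, decompose each as a weighted integral of the Dirac character $\delta_e$ and the indecomposable characters coming from the various $\ICh_N(\Gamma)$ (this decomposition uses item (2) of Definition \ref{def: charfinite groups} together with $Rad(\Gamma)=\{e\}$), and pick $M\in\mathbb{N}$ large enough that $\sum_{N=1}^M c_N>c/2$ and $\sum_{N=1}^M c_N'>c'/2$, where $c=\sum_N c_N>0$ and $c'=\sum_N c_N'>0$.

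The key new step is producing $g$. By item (4) of Definition \ref{def: charfinite groups}, the union $\bigcup_{N\leq M}\ICh_N(\Gamma)$ meets only finitely many isomorphism classes of unitary representations; let $\pi_{\chi_1},\ldots,\pi_{\chi_r}$ be representatives. Let $\gamma\in\Gamma$ be the distorted element of infinite order supplied by hypothesis. Proposition \ref{prop: distorted elelment maps to torsion} gives that each $\pi_{\chi_i}(\gamma)\in U(\dim\chi_i)$ has finite order $n_i$. Setting $k=\mathrm{lcm}(n_1,\ldots,n_r)$ and $g:=\gamma^k$, the element $g$ lies in $\bigcap_{i=1}^r\ker(\pi_{\chi_i})$, and $g\neq e$ because $\gamma$ has infinite order.

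Finally, the remainder of the argument is identical to the concluding estimate of Proposition \ref{prop: examples of NFRC-charfinite groups_1}: since $\chi(g)=1$ for every $\chi\in\bigcup_{N\leq M}\ICh_N(\Gamma)$, applying the triangle inequality to the integral decompositions yields $|\phi(g)|\geq 2\sum_{N=1}^M c_N-c>0$ and symmetrically $|\psi(g)|>0$, contradicting $\phi(g)\psi(g)=0$. I do not foresee a genuine obstacle; the only technical point that requires some care is observing that the $\mathrm{lcm}$ is well-defined, which is precisely what the finiteness statement in item (4) of Definition \ref{def: charfinite groups} provides, and that passing to an isomorphic copy of a representation does not change its kernel, so the kernel intersection is well-defined on isomorphism classes.
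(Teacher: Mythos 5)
Your proposal is correct and follows essentially the same route as the paper: the paper's own proof likewise repeats the argument of Proposition \ref{prop: examples of NFRC-charfinite groups_1} and, invoking Proposition \ref{prop: distorted elelment maps to torsion}, takes $g=\gamma^{n}$ for a sufficiently high power $n$ of the infinite-order distorted element. Your explicit choice of $n$ as the least common multiple of the orders $\pi_{\chi_i}(\gamma)$, justified by item (4) of Definition \ref{def: charfinite groups}, simply spells out the detail the paper leaves implicit.
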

\begin{proof}

Let $\phi,\psi$ be two characters on $\Gamma$. Assume that $\phi\not\equiv \delta_e$ and $\psi\not\equiv \delta_e$, we aim to show there exists some $e\neq g\in \Gamma$ such that $\phi(g)\psi(g)\neq 0$.

Following the same proof as before, in view of Proposition \ref{prop: distorted elelment maps to torsion}, 
it suffices to observe that we may take $g$ be to $\gamma^n$ for high enough power $n$, where $\gamma$ is a distorted element with infinite order in $\Gamma$.
\end{proof}

On the one hand, note that irreducible non-uniform lattices in higher rank Lie groups contain distorted elements with infinite order as shown in \cite{lmr}. It is clear to see that property (U3) (and hence also (U1), (U2)) in \cite{lmr} implies being distorted. On the other hand, \cite[Corollary 2.6, 2.9]{lmr} show that both uniform lattices in a semisimple Lie group and hyperbolic groups do not contain a U-element.

\subsection{Examples without non-factorizable regular characters}

Via Pontryagin duality, we observe that $\mathbb{Z}$ does not have non-factorizable regular character.
We are grateful to Huichi Huang for  helpful discussion on \cite{rv}.
\begin{proposition}
The integer group $\mathbb{Z}$ does not have non-factorizable regular character.
\end{proposition}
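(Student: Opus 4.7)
My plan is to translate the statement into an elementary harmonic-analytic question via Pontryagin duality. By Bochner's theorem, a character on $\Z$ (positive definiteness being the only nontrivial condition, conjugation invariance being automatic on abelian groups) corresponds bijectively to a probability measure $\mu$ on $\T = \widehat{\Z}$ via $\phi(n) = \hat\mu(n) = \int_{\T} z^n\, d\mu(z)$. Under this bijection, the character $\delta_e$ corresponds to the normalized Haar (Lebesgue) measure $\lambda$ on $\T$. Hence the failure of the non-factorizable regular character property for $\Z$ amounts to exhibiting two probability measures $\mu, \nu$ on $\T$, each distinct from $\lambda$, such that $\hat\mu(n)\hat\nu(n) = 0$ for all $n \neq 0$.

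The second step is to construct such a pair. I would take $\mu = \tfrac{1}{2}(\delta_1 + \delta_{-1})$, so that $\hat\mu(n) = \tfrac{1}{2}(1 + (-1)^n) = \mathbf{1}_{2\Z}(n)$ vanishes on the odd integers and $\mu$ is clearly not Haar. For $\nu$, I would take any non-Haar probability measure whose pushforward under the squaring map $z \mapsto z^2$ equals $\lambda$; since $\hat\nu(2k) = \int z^{2k}\, d\nu$ equals the $k$-th Fourier coefficient of the pushforward measure, this condition forces $\hat\nu$ to vanish on every nonzero even integer. A concrete choice is $\nu = 2\, \mathbf{1}_A \lambda$, where $A$ is the open half-circle $\{e^{i\theta} : -\pi/2 < \theta < \pi/2\}$; a direct computation yields $\hat\nu(n) = \tfrac{2\sin(n\pi/2)}{n\pi}$ for $n \neq 0$, which vanishes precisely on the even nonzero integers and is nonzero on every odd integer, so $\nu \neq \lambda$.

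Combining the two, $\hat\mu(n)\hat\nu(n) = 0$ for every $n \neq 0$---at odd $n$ by $\hat\mu(n) = 0$, at even $n \neq 0$ by $\hat\nu(n) = 0$---and neither character is $\delta_e$, completing the argument. The only substantive point to verify is that the pushforward of $\nu$ under squaring equals $\lambda$; this is immediate from the observation that $A$ and $-A$ together cover $\T$ up to a null set and the squaring map sends each half-circle bijectively onto $\T$, so the pushforward density is identically $1$. Apart from Bochner's theorem, no nontrivial analytic input is needed; the main thing to get right is the compatibility between vanishing of $\hat\nu$ on a sublattice and the corresponding pushforward-Haar reformulation.
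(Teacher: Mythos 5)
Your proof is correct, and it takes the same first step as the paper --- translating via Bochner/Herglotz duality into the question of finding two non-Haar probability measures on $\mathbb{T}$ whose Fourier coefficients have product zero off $n=0$ (equivalently, whose convolution is Haar) --- but it diverges at the key construction. The paper at this point invokes a nontrivial external result of Rao--Varadarajan, which produces two measures \emph{singular} with respect to Haar whose convolution is Haar measure. You instead exhibit an explicit elementary pair: $\mu=\tfrac12(\delta_1+\delta_{-1})$, whose transform is $\mathbf{1}_{2\mathbb Z}$ (this is just the character $\delta_{2\mathbb Z}$ of the subgroup $2\mathbb{Z}$), and the absolutely continuous measure $\nu=2\,\mathbf{1}_A\lambda$ on the right half-circle, whose transform $\tfrac{2\sin(n\pi/2)}{n\pi}$ vanishes exactly on the nonzero even integers; your computations of both transforms, of the normalization, and of the pushforward-under-squaring reformulation all check out. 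What each approach buys: yours is self-contained and shows the failure of non-factorizability already with one atomic and one absolutely continuous measure, requiring no analytic input beyond Bochner; the paper's citation yields the stronger (but here unnecessary) fact that the two complementary characters can both be chosen to come from measures singular to Haar. Either argument proves the proposition, and yours is arguably the more economical one.
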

\begin{proof}
By Fourier transform, we have a bijection between characters on $\mathbb{Z}$ and measures on the unit circle $\mathbb{T}$. More precisely,
for any character $\phi$ on $\mathbb{Z}$, there is a unique measure $\mu_{\phi}$ on the unit circle $\mathbb{T}$ such that $\int_{\mathbb{T}}z^nd\mu_{\phi}(z)=\phi(n)$ for all $n\in\mathbb{Z}$. Conversely, given any measure $\mu$ on $\mathbb{T}$, the map $\mathbb{Z}\ni n\mapsto \int_{\mathbb{T}}z^nd\mu(z)\in\mathbb{\mathbb{C}}$ defines a character on $\mathbb{Z}$. Hence, for two characters $\phi,\psi$ on $\mathbb{Z}$, we have $\phi(n)\psi(n)=\int_{\mathbb{T}}z^nd(\mu_{\phi}*\mu_{\psi})(z)$ for all $n\in\mathbb{Z}$, where $\mu_{\phi}*\mu_{\psi}$ denotes the convolution of two measures.  Moreover, for a measure $\theta$ on $\mathbb{T}$, we know that $\theta=\text{Haar}$, the Haar measure on $\mathbb{T}$, iff $\int_{\mathbb{T}}z^nd\theta(z)=0$ for all $n\neq 0$.

Hence, to show $\mathbb{Z}$ does not have non-factorizable regular character, we just need to construct two non-Haar measures $\mu,\nu$ on $\mathbb{T}$ such that $\mu*\nu=\text{Haar}$. Recall that in \cite[Theorem 3.1]{rv}, Rao and Varadarajan constructed two singular measures $\mu$ and $\nu$ (with respect to the Haar measure) on the unit circle $\mathbb{T}$
such that the convolution $\mu*\nu=\text{Haar}$, the Haar measure on $\mathbb{T}$. 
Thus we are done.
\end{proof}

\subsection{Questions on the non-factorizable regular character property}\label{subsection: questions on non-factorizable regular character property}

We do not know whether every ICC acylindrical hyperbolic group with trivial amenable radical have non-factorizable regular character. If they do, then we would get a new proof of the result in \cite{cds}.
However, even for free groups $F_n$, $n\geq 2$, it is not clear how to check this property. In view of Proposition \ref{prop: NFRC basic properties}, we observe that non-abelian free groups do not contain non-trivial normal subgroups with trivial intersection. Indeed, assume there are two non-trivial subgroups $H, K\lhd F_n$ with $H\cap K=\{e\}$. Then $H$ commutes with $K$. Pick any $e\neq h\in H$ and $e\neq k\in K$, we have $hk=kh$. In free groups, this implies that
$h=t^l, k=t^m$ for some $t\in F_n$ and $l, m\in\mathbb{Z}\setminus\{0\}$. Therefore, $e\neq t^{lm}\in H\cap K$, yielding the desired contradiction.

\begin{question}
Does the non-abelian free group $F_n$ have the non-factorizable regular character?
In particular, assume that $\phi(g)=\mu\{H\leq F_n: g\in H\}$ and $\psi(g)=\nu\{H\leq F_n: g\in H\}$, where $\mu,\nu\in \IRS(F_n)$ (see Subsection \ref{subsection:NFRC_from_simple_groups}).
Does it hold that $\phi(g)\psi(g)=0$ for all $e\neq g\in F_n$ implies $\phi\equiv \delta_e$ or $\psi\equiv \delta_e$?
\end{question}

The following proposition was communicated to us by Prof. Hanfeng Li. Note that it gives an affirmative answer to the second half of the above question. We thank him for allowing us to include it here.

\begin{proposition}[Hanfeng Li]\label{prop: hanfeng's proof}
Let $G=F_2$, the non-abelian free group on two generators. Let $\phi(g)=\mu(Fix(g))$ and $\psi(g)=\nu(Fix(g))$, where $G\curvearrowright (X,\mu)$ and $G\curvearrowright (Y,\nu)$ are two p.m.p. actions and $Fix(g)$ denotes the fixed points set of $g$. Suppose that $\phi(g)\psi(g)=0$ for all $e\neq g\in G$, equivalently, the diagonal action $G\curvearrowright (X\times Y,\mu\times \nu)$ is essentially free. Then either $\phi\equiv \delta_e$ or $\psi\equiv \delta_e$; equivalently, at least one of the two actions is essentially free.
\end{proposition}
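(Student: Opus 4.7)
The plan is to rephrase the statement in terms of invariant random subgroups (IRSs) of $F_2$ and then exploit the algebra of normal subgroups of $F_2$. First, push $\mu$ forward under the stabilizer map $x\mapsto \st_{F_2}(x)$ to obtain an IRS $\mu_*\in\IRS(F_2)$, and similarly define $\nu_*$ from $\nu$. With a countable union over $g\in F_2\setminus\{e\}$, the hypothesis $\phi(g)\psi(g)=0$ becomes $H\cap K=\{e\}$ for $(\mu_*\otimes\nu_*)$-almost every pair $(H,K)\in\Sub(F_2)\times\Sub(F_2)$, while the desired conclusion $\phi\equiv\delta_e$ or $\psi\equiv\delta_e$ translates to $\mu_*=\delta_{\{e\}}$ or $\nu_*=\delta_{\{e\}}$. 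Decomposing into ergodic components (both hypothesis and conclusion survive componentwise by Fubini), we may assume $\mu_*$ and $\nu_*$ are ergodic under the $F_2$-conjugation action on $\Sub(F_2)$, and argue by contradiction, supposing that $\mu_*$- and $\nu_*$-a.e.\ subgroup is nontrivial.

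The central algebraic input is that any two nontrivial normal subgroups of $F_2$ intersect nontrivially. Indeed, given $a\in N_1\setminus\{e\}$ and $b\in N_2\setminus\{e\}$, the commutator $[a,b]$ lies in $N_1\cap N_2$; if $[a,b]=e$ then $a,b$ lie in a common maximal cyclic subgroup $\langle c\rangle$, and replacing $a$ by a conjugate $gag^{-1}$ with $g\notin\langle c\rangle$ produces a nontrivial commutator, thanks to the malnormality of $\langle c\rangle$ in $F_2$. The strategy is then to extract, from each nontrivial ergodic IRS, a nontrivial normal subgroup of $F_2$ contained in $\mu_*$-a.e.\ (resp.\ $\nu_*$-a.e.) sampled subgroup; this would force $\phi$ and $\psi$ to equal $1$ on a common nontrivial normal subgroup, contradicting $\phi\psi=\delta_e$. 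In the atomic case, ergodicity forces an atom $H_0$ to have finite $F_2$-conjugation orbit, so $N_{F_2}(H_0)$ has finite index and the normal core $\bigcap_{g\in F_2}gH_0g^{-1}$ supplies the required normal subgroup.

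The main obstacle is the continuous case, where both IRSs are non-atomic: generic sampled subgroups have trivial normal core, so the direct extraction fails. Here one would need a finer structural input on ergodic non-atomic IRSs of $F_2$---for instance, that two independent samples must share a nontrivial cyclic subgroup with positive probability---and I expect this step to be the heart of the argument. One candidate route is to analyze the Koopman representation of $F_2$ on $L^2(\Sub(F_2),\mu_*)$ and exploit non-amenability of $F_2$ via Bekka-type amenability criteria (noting that the analogous statement fails for amenable groups such as $\Z$, by the Rao--Varadarajan convolution example recalled in the paper); another is to invoke classification results for IRSs of free or hyperbolic groups to reduce to the already handled atomic case.
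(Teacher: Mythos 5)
Your reduction to ergodic IRSs and your treatment of the atomic case are fine, but the proposal has a genuine, and indeed acknowledged, gap: the non-atomic ergodic case is exactly where the difficulty lies, and the strategy you propose for it cannot succeed as stated. Extracting from each nontrivial ergodic IRS a nontrivial normal subgroup of $F_2$ contained in almost every sampled subgroup is impossible in general: $F_2$ admits many non-atomic ergodic IRSs whose generic subgroup has trivial normal core (you note this yourself), so there is no common normal subgroup to intersect, and the two fallback routes you sketch (Koopman/amenability analysis, or a classification of IRSs of free groups reducing to the atomic case) are not carried out and are not known to close the argument. The deeper issue is that passing to the stabilizer IRSs discards the information the proof actually needs, namely the \emph{joint} behaviour of the fixed-point sets of specific elements under the group action; knowing the two marginal IRSs intersect trivially almost surely does not by itself localize a single nontrivial element with positive fixed-point measure on both sides.

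The paper's argument avoids IRS structure theory entirely and works directly with two elements $s,t$ satisfying $\mu(\mathrm{Fix}(s))>0$ and $\nu(\mathrm{Fix}(t))>0$. A Poincar\'e-recurrence count ($\sum_{i=0}^n\mu(t^iW)>1$ for $W=\mathrm{Fix}(s)$) produces a positive-measure set $V\subseteq \mathrm{Fix}(s)$ with $t^{j_0}V\subseteq \mathrm{Fix}(s)$, whence $V\subseteq \mathrm{Fix}(s^{-i}t^{-j_0}s^{i}t^{j_0})$ for \emph{all} $i\in\mathbb Z$; the symmetric argument on $(Y,\nu)$ fixes some $i_0\neq 0$ working for all $j$, so the single commutator $c=s^{-i_0}t^{-j_0}s^{i_0}t^{j_0}$ has positive fixed-point measure for both actions. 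If $c\neq e$ this contradicts essential freeness of the diagonal action; if $c=e$ then $s^{i_0}$ and $t^{j_0}$ commute, hence lie in a common cyclic subgroup, and a common nontrivial power of $s$ and $t$ yields the same contradiction. Your commutator-plus-malnormality observation is the right algebraic germ (it is precisely the dichotomy used in the final step), but it must be applied to concrete elements with positive-measure fixed sets via the recurrence argument, not to normal closures inside the IRS picture. As written, the proposal does not constitute a proof.
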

\begin{proof}
Assume not, then let $e\neq s, t$ be two non-trivial elements in $G$ such that $\mu(Fix(s))>0$ and $\nu(Fix(t))>0$.

Pick an $n\geq 1$ such that $W:=Fix(s)$ satisfies that $\mu(W)>\frac{1}{n}$. Then since $\sum_{i=0}^n\mu(t^iW)>1$, we deduce that for some $0\leq i\neq j\leq n$ we have $\mu(t^iW\cap t^jW)>0$. In particular, there is some $j_0>0$ such that $\mu(W\cap t^{-j_0}W)>0$.
This implies that for some non-null set $V\subset W\subset X$ we have $\mu(V)>0$ and $t^{j_0}V\subset W$.

Since $V\subset W\subset Fix(s)$ and $t^{j_0}V\subset W$, we deduce that $V\subset Stab(t^{-j_0}st^{j_0})$ and we also have $V\subset Stab(s^{-1}t^{-j_0}st^{j_0})$. Observe that we may replace $s$ by any $s^i$ for all $i\in\mathbb{Z}$ and hence deduce that 
$$V\subset Fix(s^{-i}t^{-j_0}s^it^{j_0}),~\forall~ i\in\mathbb{Z}.$$
By symmetry, \ie by repeating the above argument for the 2nd action, we deduce there is some $i_0\neq 0$ and some non-null set $V'\subset Y$ such that
$$V'\subset Fix(s^{-i_0}t^{-j}s^{i_0}t^j),~\forall~j\in\mathbb{Z}.$$
This implies that
$(\mu\times \nu)(Fix(s^{-i_0}t^{-j_0}s^{i_0}t^{j_0}))\geq (\mu\times\nu)(V\times V')=\mu(V)\nu(V')>0.$

If $s^{-i_0}t^{-j_0}s^{i_0}t^{j_0}\neq e$, then we get a contradiction to the essential freeness of the diagonal action. 

If $s^{-i_0}t^{-j_0}s^{i_0}t^{j_0}=e$, then $s^{i_0}t^{j_0}=t^{j_0}s^{i_0}$. Thus since every subgroup in $G$ is free and $i_0j_0\neq 0$, we deduce that $\langle s^{i_0}, t^{j_0}\rangle\cong \mathbb{Z}=\langle h\rangle$. Hence $s^{i_0}=h^m$ and $t^{j_0}=h^{m'}$ for some non-zero $m,m'\in\mathbb{Z}$. Then $e\neq g:=h^{mm'}\in\langle s\rangle\cap \langle t\rangle$ has  non-null fixed points sets (since $Fix(s)\cup Fix(t)\subseteq Fix(g)$) and thus we still get a contradiction to the essential freeness of the diagonal action.
\end{proof}

 We record one simple observations on checking whether free groups have non-factorizable regular characters. The following proposition shows that to check non-factorizable character property for $F_2$ it is sufficient to restrict the attention to characters with trivial kernel.
\begin{proposition}
Assume that for any characters $\phi,\psi$ on $F_2$, such that $\phi\psi=\delta_e$ and $\ker(\phi)=\ker(\psi)=\{e\}$, one has either $\phi=\delta_e$ or $\psi=\delta_e$. Then $F_2$ has the non-factorizable regular character.
\end{proposition}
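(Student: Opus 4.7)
The plan is to reduce to the trivial-kernel case by a simple convex-combination perturbation with $\delta_e$. Given any two characters $\phi,\psi$ on $F_2$ with $\phi(g)\psi(g)=0$ for all $g\neq e$, I would first record the key enabling observation: since $F_2$ is ICC, the function $\delta_e$ is itself a character on $F_2$ (it is the canonical trace coming from the regular representation, hence an indecomposable positive definite conjugation-invariant normalized function). Consequently, for any $\epsilon\in(0,1)$, the convex combinations
\[
\phi_\epsilon := (1-\epsilon)\delta_e + \epsilon\phi,\qquad \psi_\epsilon := (1-\epsilon)\delta_e + \epsilon\psi
\]
are again characters on $F_2$.

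Next I would verify three properties of the perturbed pair. For $g\neq e$ one has $\phi_\epsilon(g)=\epsilon\phi(g)$, and since $|\phi(g)|\leq 1$ this forces $|\phi_\epsilon(g)|\leq\epsilon<1$; in particular $\phi_\epsilon(g)\neq 1$, so $\ker(\phi_\epsilon)=\{e\}$, and symmetrically $\ker(\psi_\epsilon)=\{e\}$. A direct computation gives $\phi_\epsilon(g)\psi_\epsilon(g)=\epsilon^2\phi(g)\psi(g)=0$ for all $g\neq e$, so $\phi_\epsilon\psi_\epsilon=\delta_e$. Applying the standing hypothesis to the pair $(\phi_\epsilon,\psi_\epsilon)$ then yields $\phi_\epsilon\equiv\delta_e$ or $\psi_\epsilon\equiv\delta_e$. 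In the first case, evaluating at any $g\neq e$ gives $\epsilon\phi(g)=0$, whence $\phi\equiv\delta_e$; the second case is symmetric.

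I do not anticipate any real obstacle here: the argument is a one-line perturbation trick. The only conceptual content is the remark that $\delta_e$ sits in the character simplex of $F_2$, which is what makes the interpolation legitimate and simultaneously forces the perturbations to have trivial kernels regardless of what $\ker(\phi)$ and $\ker(\psi)$ were originally.
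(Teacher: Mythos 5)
Your proof is correct, and it takes a genuinely different (and slicker) route than the paper. The paper attacks the non-trivial-kernel case head-on: if $H:=\ker(\phi)\neq\{e\}$, then $H$ is a non-abelian free normal subgroup on which $\psi$ must vanish (off $e$), and since centralizers in $F_2$ are cyclic one can manufacture, for each $g\neq e$, infinitely many conjugates $s_n$ of $g$ with $s_n^{-1}s_m\in H$; Lemma \ref{lemma: vanishing character from many conjugates} then forces $\psi(g)=0$, so $\psi=\delta_e$ outright --- the hypothesis is never even invoked in that case. Your argument instead perturbs both characters toward $\delta_e$ via $\phi_\epsilon=(1-\epsilon)\delta_e+\epsilon\phi$, which kills the kernels (since $|\phi_\epsilon(g)|\leq\epsilon<1$ for $g\neq e$), preserves the orthogonality relation, and lets you quote the hypothesis directly; reading off $\phi_\epsilon=\delta_e$ gives $\phi=\delta_e$. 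Every step checks out: $\delta_e$ is positive definite on any group, convex combinations of characters are characters, and $|\phi(g)|\leq 1$ follows from the $2\times 2$ positivity condition. What your approach buys is generality and economy --- it uses no structure of $F_2$ whatsoever, so it proves the reduction ``trivial-kernel case suffices'' for \emph{every} countable group, in three lines. What the paper's approach buys is extra information: it shows that in the non-trivial-kernel case one of the two characters is forced to be regular unconditionally, which is a standalone fact about free groups rather than a reduction. One cosmetic remark: you do not need ICC (or indecomposability of $\delta_e$) anywhere; being a character is enough, and that holds for every group.
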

\begin{proof} Let $\phi,\psi$ be two characters on $F_2$ and $\phi\psi=\delta_e$.
Assume that $H:=\ker(\phi):=\{g\in F_2: \phi(g)=1\}\neq \{e\}$.
Note that $H\lhd F_2$ and hence $H$ is non-amenable and isomorphism to $F_n$. For any $e\neq g\in F_2$, the centralizer $C(g)$ is cyclic and hence $[H: H\cap C(g)]=\infty$. Thus there is an infinite sequence $g_n\in H$ such that $g_nC(g)\cap g_mC(g)=\emptyset$ for all $n\neq m$.

Define $s_n=g_ngg_n^{-1}$. Note that the choice of $g_n$ guarantees that $s_n\neq s_m$ for all $n\neq m$. We have $s_n^{-1}s_m=g_ng^{-1}g_n^{-1}g_mgg_m^{-1}\in g_ng^{-1}Hgg_m^{-1}=H$. Hence $\psi(s_n^{-1}s_m)=0$ since $\phi|_H=1$ implies $\psi|_H=0$.
Therefore, $\psi(s_n)=\psi(g)=0$ by Lemma \ref{lemma: vanishing character from many conjugates}. Hence $\psi=\delta_e$.
\end{proof}

\section{Approximately finite groups}\label{sec:approximately finite}
In this section we show that full groups of simple Bratteli diagrams (known also as approximately finite groups) have the ISR property and prove some related useful statements. The proof relies on several auxiliary lemmas and will be given at the end of the section.

\subsection{Preliminaries on Bratteli diagrams and corresponding AF groups.}
The AF full groups arose from the study of orbit equivalence theory of Cantor
minimal systems developed in a series of papers by Herman–Putnam–Skau \cite{HermanPutnamSkau: 1992} and
Giordano-Putnam-Skau \cite{GPS: 1999} and were motivated by applications to the theory of $C^*$-algebras. Giordano-Putnam-Skau \cite{GPS: 1995} showed
that Cantor minimal $\mathbb Z$-systems are strong orbit equivalent if and only if the associated
AF full groups are isomorphic as abstract groups and that the isomorphism of
crossed product $C^*$-algebras is completely characterized by the strong orbit equivalence
of underlying dynamical systems. We notice that the group $G_B$ is not always simple, however the commutator subgroup $G_B'$ is always simple.

In \cite{dm_jfa2013} the first named author and Medynets described the characters of the full groups $G_B$ of Bratteli diagrams $B$ for the case when the diagram $B$ is simple and $G_B$ is simple and admits only finitely many ergodic invariant measures. In a manuscript in preparation \cite{dm_manuscript} they describe the characters on each of the groups $G_B$, $G_B'$  with the assumption of simplicity of $B$ only, and thus in the cases when the set of invariant ergodic measures can be countably or even uncountably infinite. One of the main results can be simplified as follows.
\begin{theorem}\label{theorem: characters AF groups} Let $G$ be one of the groups $G_B,G_B'$. Then \begin{itemize}
\item[$1)$] for any non-trivial and non-regular indecomposable character $\chi$  on $G$ there exist a measure-preserving action of $G$ on a Borel probability space $(X,\mu)$ and a homomorphism $\rho:G/G_B'\to S^1=\{z\in\mathbb C:|z|=1\}$ such that \begin{equation}\label{EqChiMainTh}\chi(g)=\rho([g])\mu(\fix(g))\text{ for all }g\in G.\end{equation}
\item[$2)$] every character defined according to $1)$ is indecomposable.
\end{itemize}
\end{theorem}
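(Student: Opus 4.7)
My plan is to prove the two directions by exploiting the inductive limit structure of the AF group. For direction $1)$, I would start from an indecomposable character $\chi$ on $G$ and form the GNS triple $(\pi_\chi,H_\chi,\xi_\chi)$. Indecomposability ensures that $M=\pi_\chi(G)''$ is a finite factor with trace $\tau(\cdot)=\langle\,\cdot\,\xi_\chi,\xi_\chi\rangle$ and $\chi(g)=\tau(\pi_\chi(g))$. The Bratteli diagram $B$ endows $G$ with a natural filtration $G_0<G_1<\cdots$ by finite subgroups, each a direct product of symmetric (resp.\ alternating) groups indexed by the vertices at level $n$, with $G=\varinjlim G_n$ (and similarly for $G_B'$). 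I would study the restrictions $\chi|_{G_n}$ via their decomposition into irreducible characters of $G_n$, and, by a Thoma/Vershik--Kerov/Okounkov type asymptotic analysis, extract two pieces of limit data: a $G$-invariant probability measure $\mu$ on the path space $X=X_B$ (or an appropriate quotient thereof) encoding the ``fixed-point'' contributions, and a one-dimensional character $\rho$ of the abelianization $G/G_B'$ capturing the sign contributions that survive in the limit.

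For the fixed-point formula itself, I would express $\pi_\chi(g)$, for each $g\in G$, as a combination of a central element of the finite-level algebra $M_n=\pi_\chi(G_n)''$ (detecting how $g$ permutes cofinal cylinders) and a sign contribution on the fibers. Passing $n\to\infty$ and using that the trace $\tau$ disintegrates against the asymptotic center, the ``positional'' part should contribute exactly $\mu(\fix(g))$, while the sign part factorizes through the abelianization and produces $\rho([g])$. This yields \eqref{EqChiMainTh}.

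For direction $2)$, given data $(\mu,\rho)$, define $\chi(g)=\rho([g])\mu(\fix(g))$. Positive definiteness is standard: $\chi$ is the diagonal matrix coefficient of the quasi-regular representation on $L^2(X,\mu)$ twisted by the one-dimensional character $\rho$ pulled back from $G/G_B'$. Conjugation invariance is immediate from $G$-invariance of $\mu$ together with $\rho$ being defined on the abelianization. For indecomposability, I would identify $\pi_\chi(G)''$ with (a corner of) the twisted crossed product $L^\infty(X,\mu)\rtimes_\rho G$ and reduce to showing this algebra is a factor; this in turn follows from ergodicity of $G\curvearrowright(X,\mu)$, which is the implicit nondegeneracy assumption in the statement (non-ergodic $\mu$ would decompose $\chi$).

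The main obstacle will be direction $1)$, specifically the canonical construction of the pair $(\mu,\rho)$ from $\chi$ and the proof that the fixed-point formula actually holds. This requires fine control over how $\tau$ interacts with the asymptotic center of the inductive limit $\bigcup_n\pi_\chi(G_n)''$, generalizing the Thoma parametrization (which corresponds to the case $B$ is the Pascal/Young diagram) to arbitrary simple Bratteli diagrams; the combinatorics of paths in $B$ replace Young diagrams, and the appearance of possibly uncountably many ergodic invariant measures on $X_B$ is what makes the case treated in \cite{dm_manuscript} substantially harder than \cite{dm_jfa2013}.
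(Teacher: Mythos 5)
You should first note that the paper does not prove this theorem at all: it is quoted as a simplified form of a result from the Dudko--Medynets manuscript in preparation \cite{dm_manuscript} (building on \cite{dm_jfa2013}), and the authors explicitly remark that it is not used in their proofs. So there is no proof in the paper to compare against; what matters is whether your sketch would constitute a proof, and it would not. For direction $1)$ you only describe a strategy (GNS construction, restriction to the finite levels $G_n$, a Vershik--Kerov/Okounkov-type asymptotic analysis of the decompositions of $\chi|_{G_n}$) and you yourself flag the actual construction of the pair $(\mu,\rho)$ and the verification of the fixed-point formula as ``the main obstacle''. That obstacle is precisely the content of the theorem: controlling how the trace interacts with the asymptotic centers of $\pi_\chi(G_n)''$ for an arbitrary simple Bratteli diagram, where the simplex of invariant measures can be infinite-dimensional, is the whole difficulty, and nothing in your outline addresses it.

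Direction $2)$ as you argue it has a genuine flaw. You propose to identify $\pi_\chi(G)''$ with (a corner of) $L^\infty(X,\mu)\rtimes_\rho G$ and deduce indecomposability from ergodicity. But $\pi_\chi(G)''$ is generated by the group image alone, and for a general non-free ergodic action it need not coincide with the crossed product, nor need $\chi(g)=\mu(\fix(g))$ be extremal: take $G=\mathbb Z$ acting ergodically on a two-point space by the swap; then $\chi(n)=\tfrac12\bigl(1+(-1)^n\bigr)$, which is a proper convex combination of the characters $1$ and $(-1)^n$. Hence ergodicity is not the right ``implicit nondegeneracy assumption'' by itself; what makes the statement true for $G_B$ and $G_B'$ is the full-group structure (the abundance of elements supported in arbitrarily small clopen sets, as exploited in Lemmas \ref{lemma: fpc preserves orbits}--\ref{lemma: fpc G_A} of this paper), which forces $\pi_\chi(G)''$ to be the von Neumann algebra of the associated measured groupoid and its canonical trace to be a factor trace. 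Supplying that identification, together with the ergodic-theoretic hypotheses under which formula \eqref{EqChiMainTh} defines a character at all, is a key missing lemma in your plan, and without it both directions remain open.
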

\noindent In particular, any non-regular character $\chi$ on $G_B'$ has positive values on each element $g\in G_B'$ with $\suppg(g)\neq X$. This implies that $G_B$ and $G_B'$ have non-factorizable regular character. However,  we do not use either latter statements or Theorem \ref{theorem: characters AF groups} in the proofs of the results of the present section.

Now, let us recall some basic definitions and properties concerning Bratteli diagrams. For details we refer the reader to the papers of Herman-Putnam-Skau
\cite{HermanPutnamSkau:1992}, Dudko-Medynets \cite{dm_jfa2013} and
Bezuglyi-Kwiaktowski-Medynets-Solomyak
\cite{BezuglyiKwiatkowskiMedynetsSolomyak:2013}. 
\begin{definition}\label{Definition_Bratteli_Diagram} A {\it Bratteli diagram} is
an infinite graph $B=(V,E)$ such that the vertex set
$V=\bigcup_{i\geq 0}V_i$ and the edge set $E=\bigcup_{i\geq 1}E_i$
are partitioned into disjoint subsets $V_i$ and $E_i$ such that

(i) $V_0=\{v_0\}$ is a single point;

(ii) $V_i$ and $E_i$ are finite sets;

(iii) there exist a range map $r$ and a source map $s$ from $E$ to
$V$ such that $r(E_i)= V_i$, $s(E_i)= V_{i-1}$, and
$s^{-1}(v)\neq\emptyset$, $r^{-1}(v')\neq\emptyset$ for all $v\in V$
and $v'\in V\setminus V_0$.
\end{definition}
\noindent The pair $(V_i,E_i)$ or just $V_i$ is called the $i$-th level of the diagram $B$.
 A finite or infinite sequence of edges $(e_i : e_i\in E_i)$ such
that $r(e_{i})=s(e_{i+1})$ is called a {\it finite} or {\it infinite
path}, respectively. We write $e(v,v')$ to denote a path $e$ such
that $s(e)=v$ and $r(e)=v'$. For a Bratteli diagram $B$, we denote
by $X_B$ the set of infinite paths starting at the vertex $v_0$:
$$X_B = \left\{\{e_n\}\in \prod_{i\geq 1}E_i : s(e_{i+1}) = r(e_i)\mbox{ for every }i\geq 1\right\}.$$ Then $X_B$ is a
0-dimensional compact metric space with respect to the product topology.

Given a Bratteli diagram $B$, for every $n\geq 1$ denote by $G_n$ the group of homeomorphisms of $X_B$ that permute only the initial $n$ segments of the infinite paths $\{e_1,\ldots,e_n,e_{n+1}\ldots\}\in X_B.$  For each $n\geq 1$ and each vertex $v\in V_n$, denote by $X_v^{(n)}$ the set of all paths $\{e_1,e_2,\ldots\}$ such that $r(e_n) = v$. Denote by $h_v^{(n)}$ the number of finite paths connecting the root vertex $v_0$ to the vertex $v$ and denote by $E(v_0,v)$ the set of finite paths connecting these vertices.  let $X_v^{(n)}(\overline e)$, $\overline e\in E(v_0,v)$, be the set of infinite paths whose first $n$ initial segments coincide with those of $\overline e$. Note that $X_v^{(n)}(\overline e)$ is a clopen set.
Then for every $n\geq 1$ and every $v\in V_n$, we have that $$X_B =  \bigsqcup_{w\in V_n}X_w^{(n)}  \mbox{ and }X_v^{(n)} = \bigsqcup_{\overline e \in E(v_0,v)}X_v^{(n)}(\overline e). $$

Denote by $G_v^{(n)}$ the subgroup of $G_n$ whose elements permute only the first $n$ segments of paths from $X_v^{(n)}$. Thus,  the group $G_v^{(n)}$ is isomorphic to the symmetric group on $\{X_v^{(n)}(\overline e) : \overline e\in E(v_0,v)\}$.  It follows that $$G_n = \prod_{v\in V_n} G_v^{(n)}.$$  Set $G_B = \bigcup_{n\geq 1}G_n$. Note that $G_n\subset G_{n+1}$ for every $n\geq 1$ and $G_B$ is a locally finite group.

\begin{definition}
  \label{DefinitionAFFullGroup} Given a Bratteli diagram $B$, the group $G_B$ defined above is called the {\it full group associated to the diagram $B$}. We will simply write $G$ when the diagram $B$ is obvious form the context.
\end{definition}

The following remark reveals the connection between algebraic properties of the full groups and combinatorial properties of the associated Bratteli diagrams. The proofs and the references to the proofs can be found in Dudko-Medynets \cite[Section 2.1]{dm_jfa2013}.

\begin{remark}\label{remark: full group} Let $B = (V,E)$ be a Bratteli diagram and $G_B$ be the associated full group.

\begin{enumerate}

\item The dynamical system $(X_B,G_B)$ is {\it minimal}, that is every $G_B$-orbit is dense, if and only if the Bratteli diagram $B$ is {\it simple}, that is, for every $n\geq 1$ there exists $m> n$ such that every vertex in $V_n$ is connected to every vertex in $V_m$.

\item The dynamical system $(X_B,G_B)$ is {\it minimal} if and only if the commutator subgroup of $G_B$ is simple, \ie has no non-trivial normal subgroups.

\item The group $G_B$ is simple if and only if for every $n\geq 1$ there exists $m> n$ such that every vertex in $V_n$ is connected to every vertex in $V_m$ and the number of paths between these vertices is even. We refer to Bratteli diagrams with this property as {\it even diagrams}. In this case, the group $G_B$ coincides with its commutator subgroup.
\end{enumerate}
\end{remark}
The main result of this section is:
\begin{theorem}\label{thm: approximately finite groups have the ISR property}
Let $B$ be a simple Bratteli diagram and $G_B$ be the associated full group. Then $G_B$ has the ISR property.
\end{theorem}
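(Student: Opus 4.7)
Let $P\subseteq L(G_B)$ be a $G_B$-invariant von Neumann subalgebra with trace-preserving conditional expectation $E:L(G_B)\to P$. By Proposition \ref{prop: criterion on P=L(H)} it suffices to prove $E(u_g)\in\mathbb{C}u_g$ for every $g\in G_B$, after which the desired normal subgroup will be $H=\{g\in G_B:E(u_g)\neq 0\}$.

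A first reduction is to the subfactor case via the non-factorizable regular character property. By Theorem \ref{theorem: characters AF groups}, every non-regular indecomposable character on $G_B'$ has the form $\chi(g)=\mu(\fix(g))$, which is strictly positive whenever $\suppg(g)\neq X_B$; this forces $G_B$ (and $G_B'$) to have non-factorizable regular character, and Theorem \ref{theorem: NFRC consequences} then gives that every $G_B$-invariant \emph{subfactor} of $L(G_B)$ equals $L(H)$ for some normal subgroup. The main remaining task is to handle non-factor invariant subalgebras, which is nontrivial here because $G_B$ is amenable with non-trivial amenable radical, so \cite[Theorem A]{aho} does not apply.

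To do this I would combine the NFRC argument with a direct Fourier analysis in the spirit of \cite{aj,jz}. Following \cite[Theorem 3.1]{cds}, extract a normal subgroup $H_0\lhd G_B$ with the decomposition $L(H_0)=(P\cap L(H_0))\vee(P'\cap L(H_0))$; the functions $\phi(g)=\tau(E(u_g)u_g^*)$ and $\psi(g)=\tau(E'(u_g)u_g^*)$ on $H_0$ are characters by \cite[Proposition 3.2]{jz} and satisfy $\phi(g)\psi(g)=0$ for $g\in H_0\setminus\{e\}$. Since $H_0\lhd G_B$ inherits an AF tower structure, the same character classification applies, and the NFRC argument forces either $\phi\equiv\delta_e$ or $\psi\equiv\delta_e$, pinning down $E(u_g)$ for all $g\in H_0$. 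For $g\notin H_0$, I combine the $G_B$-equivariance $u_sE(u_g)u_s^*=E(u_{sgs^{-1}})$, the Fourier-support constraint $E(u_g)\in L(C_{G_B}(g))'\cap L(G_B)$ from Proposition \ref{prop: control supports for elements in relative commutants}, and the locally finite tower $G_B=\bigcup_nG_n$ with $G_n=\prod_{v\in V_n}G_v^{(n)}$ to produce many conjugates of $g$, then invoke Lemma \ref{lemma: vanishing character from many conjugates} to force $E(u_g)\in\mathbb{C}u_g$.

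The principal obstacle I anticipate is precisely handling the center of $P$ and the elements $g\notin H_0$ left outside of the tensor decomposition: one needs, for each such $g$, a family of conjugates $\{g_i\}\subset G_B$ whose products $g_i^{-1}g_j$ have uniformly small character values against every invariant measure $\mu$ on $X_B$ that could contribute to the Fourier coefficients of $E(u_g)$. The plan is to exploit the minimality of $(X_B,G_B')$ (Remark \ref{remark: full group}) together with the transitivity of each $G_v^{(n)}$ on the clopen sets $\{X_v^{(n)}(\overline e):\overline e\in E(v_0,v)\}$ to disperse fixed-point sets at deep levels $m\gg n$, driving $\mu(\fix(g_i^{-1}g_j))$ to zero and triggering Lemma \ref{lemma: vanishing character from many conjugates}. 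Once $E(u_g)\in\mathbb{C}u_g$ is established for every $g\in G_B$, Proposition \ref{prop: criterion on P=L(H)} completes the proof.
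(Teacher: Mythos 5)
There is a genuine gap, and it sits exactly where you anticipate it: the non-factor case. Your first reduction (NFRC plus Theorem \ref{theorem: NFRC consequences}) only covers $G_B$-invariant \emph{subfactors}, and since $G_B$ is amenable you correctly note that \cite[Theorem A]{aho} cannot be used to force factoriality of a general invariant $P$. But the machinery you then propose for general $P$ does not apply: the decomposition from \cite[Theorem 3.1]{cds} used in the proof of Theorem \ref{theorem: NFRC consequences}, and in particular the identities $E(s)=a_s\tau(b_s)$, $E'(s)=b_s\tau(a_s)$ which yield $\phi(s)\psi(s)=0$ on $H_0\setminus\{e\}$, rest on $P\cap P'=\mathbb{C}$, i.e.\ on $P$ being a factor. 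For a non-factor $P$ you neither get the normal subgroup $H_0$ with a tensor splitting nor the zero-product property of the two characters, so the ``NFRC argument'' has nothing to bite on. The remaining plan --- producing conjugates $g_i$ of $g\notin H_0$ and ``dispersing fixed-point sets at deep levels'' so that $\mu(\fix(g_i^{-1}g_j))\to 0$ --- is stated as an intention, not an argument: the Fourier coefficients of $E(u_g)$ are not a priori governed by characters of the form $\mu(\fix(\cdot))$ unless you again invoke the classification of Theorem \ref{theorem: characters AF groups}, which is quoted from a manuscript in preparation and which the paper deliberately does not use in this section; and in any case Lemma \ref{lemma: vanishing character from many conjugates} needs the relevant character values to be small, which is exactly what is not established here. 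So the crux of the theorem --- arbitrary, possibly non-factor, invariant subalgebras of an amenable group --- is not proved by the proposal.

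For contrast, the paper's proof avoids both the subfactor reduction and the character classification entirely. It works with the support constraint $E(g)\in L(\fpc(g))$ and computes $\fpc$ using two families of ``complementary'' subgroups: conjugation by $G_{n,\infty}$ (Lemma \ref{lemma: fpc G_n}) gives $\fpc(g)\subset G_n$, and conjugation by $G_{X\setminus A}$ (Lemma \ref{lemma: fpc G_A}) gives $\fpc(g)\subset G_{\suppg(g)}$. An induction on disjoint transpositions then shows $E(g)\in\{0,g\}$ for every involution $g$ (Lemma \ref{lemma: E(permutations)}), with no factoriality assumption on $P$. This yields a dichotomy (Lemma \ref{lemma: P contains G'}): either some involution lies in $P$, and then simplicity of $G_B'$ and triviality of the center force $L(G_B')\subseteq P$; or all involutions are killed by $E$, in which case conjugating by disjointly supported involutions and applying Lemma \ref{lemma: vanishing character from many conjugates} shows $\phi(g)=\tau(E(g)g^{-1})$ vanishes identically, so $P=\mathbb{C}$. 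Finally, writing any $g\in G_n$ as a product of disjoint transpositions times an element of $G_n'$ gives $E(g)\in\{0,g\}$ for all $g$, and Proposition \ref{prop: criterion on P=L(H)} concludes. If you want to salvage your approach you would need a substitute for the factoriality hypothesis before invoking the decomposition of \cite{cds}; the paper's fpc-based argument is precisely such a substitute.
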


\subsection{Auxiliary lemmas.}
For any countable group $G$ and a set of elements $S\subset G$ let $C_G(S)$ be its centralizer. Given an element $g\in G$ let us call an element $h\in G$ \emph{finitely permuted by the centralizer} of $g$ if $\sharp\{t^{-1}ht\mid t\in C_G(\{g\})\}<\infty$. Denote the set of such elements by $\fpc(g)$. The following statement seems well known:
 \begin{lemma} For any countable group $G$ and any $g\in G$ one has: $E(g)\in L(\fpc(g))$.
 \end{lemma}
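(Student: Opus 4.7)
The plan is to combine two ingredients that are already at hand in the excerpt: the conjugation equivariance of $E$ (item~(2) of Proposition~\ref{prop: basic properties of c.e. related to the ISR property}) and the support localization for relative commutants (Proposition~\ref{prop: control supports for elements in relative commutants}). In particular, the lemma is just the relative-commutant estimate transported through $E$, so no new technical input should be needed.

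First I would fix $g\in G$ and set $H=C_G(\{g\})$. For every $t\in H$ one has $tgt^{-1}=g$, and Proposition~\ref{prop: basic properties of c.e. related to the ISR property}(2) gives
$$u_tE(u_g)u_t^{-1}=E(u_tu_gu_t^{-1})=E(u_g).$$
Hence $E(u_g)$ commutes with $u_t$ for every $t\in H$, i.e.\ $E(u_g)\in L(H)'\cap L(G)$.

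Next I would feed this into Proposition~\ref{prop: control supports for elements in relative commutants} applied to the subgroup $H\subseteq G$, obtaining
$$\textrm{supp}(E(u_g))\subseteq\{h\in G:\sharp\{tht^{-1}:t\in H\}<\infty\}.$$
Since the conjugation orbits $\{tht^{-1}:t\in H\}$ and $\{t^{-1}ht:t\in H\}$ coincide (via $t\leftrightarrow t^{-1}$), the right-hand side is exactly $\fpc(g)$. One small bookkeeping check I would include is that $\fpc(g)$ is a subgroup of $G$: it contains $e$, is closed under inversion because the orbit of $h^{-1}$ is the pointwise inverse of the orbit of $h$, and is closed under products because $\{t(h_1h_2)t^{-1}\}\subseteq\{th_1t^{-1}\}\cdot\{th_2t^{-1}\}$. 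Thus $L(\fpc(g))$ is a well-defined von Neumann subalgebra of $L(G)$, and the above support containment yields $E(u_g)\in L(\fpc(g))$, as required.

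I do not expect a real obstacle here: the statement is essentially a repackaging of the two cited propositions, with the only minor point being the observation that the two equivalent descriptions of the orbit set give the same subgroup $\fpc(g)$.
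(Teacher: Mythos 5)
Your proposal is correct and follows essentially the same route as the paper: conjugation equivariance of $E$ gives $E(u_g)\in L(C_G(\{g\}))'\cap L(G)$, and the relative-commutant support estimate then places the support inside $\fpc(g)$. The extra remarks (that the two orbit descriptions coincide and that $\fpc(g)$ is a subgroup) are harmless bookkeeping the paper leaves implicit.
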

 \begin{proof}
 For any $t\in C_G(\{g\})$, we have $tE(g)t^{-1}=E(tgt^{-1})=E(g)$. Thus by Proposition \ref{prop: basic properties of c.e. related to the ISR property} (2) and Proposition \ref{prop: control supports for elements in relative commutants}, we deduce that \[E(g)\in L(C_{G}(\{g\}))'\cap L(G)\subseteq L(\fpc(g)).\] 
 \end{proof}
\begin{lemma}\label{lemma: fpc preserves orbits} Let $G$ be a countable group acting faithfully by homeomorphisms on a Hausdorff topological space $X$. Assume that for any open set $A\subset X$ there exists $t\in G$ such that $\varnothing\neq\suppg(t)\subset A$. Let $g,h$ be such that $g$ is of finite order ($g^K=e$) and $h$ does not preserve orbits of $g$, \ie there exists $y\in X$ with $hy\notin \{g^ly:l\in\mathbb Z\}$. Then $h\notin \fpc(g)$.
\end{lemma}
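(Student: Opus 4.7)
The plan is to prove $h\notin\fpc(g)$ by producing an infinite family $\{t_n\}_{n\geqslant 1}\subset C_G(\{g\})$ whose conjugates $t_n^{-1} h t_n$ are pairwise distinct. Let $k$ be the $g$-period of $y$, so $k\mid K$ and the points $y,gy,\ldots,g^{k-1}y$ are distinct. Using that $hy$ lies outside this finite orbit, together with Hausdorffness and the continuity of $g$ and $h$, I would first select a neighborhood $W_0$ of $y$ and a neighborhood $W'$ of $hy$ such that $W_0, gW_0, \ldots, g^{k-1}W_0, W'$ are pairwise disjoint, $hW_0\subset W'$, and $g^k|_{W_0}=\mathrm{id}$. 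The last condition is the crucial technical point; in the intended application (the full groups $G=G_B$ with $g\in G_n$) it is automatic as soon as $W_0$ is a cylinder of depth at least $n$ containing $y$, since $g^k\in G_n$ merely permutes initial $n$-segments while fixing that of $y$.

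Next I would appeal to the density hypothesis to choose pairwise disjoint nonempty open sets $A_n\subset W_0$ and elements $s_n\in G$ with $\varnothing\neq\suppg(s_n)\subset A_n$, and set
\[
t_n := \prod_{l=0}^{k-1} g^l s_n g^{-l}.
\]
Because the factors have supports in the pairwise disjoint sets $g^l W_0$, they commute and $t_n$ is well-defined. A quick reindexing yields $g t_n g^{-1}=\prod_{l=1}^{k} g^l s_n g^{-l}$; the factor at $l=k$ equals $s_n$ thanks to $g^k|_{W_0}=\mathrm{id}$, and reordering (allowed because the factors commute) recovers $t_n$. Hence $t_n\in C_G(\{g\})$. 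This centralization step is the main obstacle I anticipate: without the identity $g^k|_{W_0}=\mathrm{id}$ the naive symmetrization fails to land in $C_G(\{g\})$, and some structural input on the ambient group (such as the clopen combinatorics underlying $G_B$) is essential here.

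To finish I would verify the $t_n$-conjugates of $h$ are pairwise distinct. Fix $n\neq m$ and pick $z\in A_n$ with $s_n(z)\neq z$. On $W_0$ the element $t_j$ coincides with $s_j$, so $t_n(z)=s_n(z)$ while $t_m(z)=z$ (since $z\notin\suppg(s_m)\subset A_m$ and $z$ lies outside $g^l W_0$ for $l\geqslant 1$). As $hW_0\subset W'$ is disjoint from the support of every $t_j$, one obtains
\[
(t_n^{-1} h t_n)(z)=h(s_n(z))\neq h(z)=(t_m^{-1} h t_m)(z),
\]
by injectivity of $h$. Consequently $\{t_n^{-1} h t_n:n\geqslant 1\}$ is infinite, and $h\notin\fpc(g)$.
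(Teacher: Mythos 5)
Your construction (symmetrized products $t_n=\prod_{l=0}^{k-1}g^{l}s_ng^{-l}$ of elements supported in small disjoint open sets, pushed away from the $h$-images, giving infinitely many distinct conjugates $t_n^{-1}ht_n$) is exactly the right mechanism, and your verification of distinctness is fine. The genuine gap is the one you yourself flag: you need an open set on which $g^{k}$ acts as the identity, you only know how to get it from the clopen combinatorics of $G_B$, and you conclude that ``some structural input on the ambient group is essential here.'' That conclusion is wrong, and with it your argument does not prove the lemma as stated, which is purely topological (any faithful action by homeomorphisms on a Hausdorff space with the support-density hypothesis). Moreover, the specific condition you ask for is false in general: if $k$ is the period of $y$ itself, there need not be any neighborhood $W_0$ of $y$ with $g^{k}|_{W_0}=\mathrm{id}$ (think of a finite-order rotation fixing a point but moving all nearby points), so the requirement cannot simply be added to the choice of $W_0$ by continuity.

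The missing idea is to abandon the base point $y$ and work at a better point inside the separating neighborhood. First choose $U\ni y$ with $g^{l}(U)\cap h(U)=\varnothing$ for all $l=0,\ldots,K-1$ (possible since $hy\notin\{g^{l}y\}$, by Hausdorffness and continuity). Now let $k\leqslant K$ be the \emph{maximal} $g$-period of points of $U$ and pick $z\in U$ realizing it. For $w$ near $z$ (and still in $U$) the points $w,gw,\ldots,g^{k-1}w$ lie in disjoint neighborhoods of the distinct points $z,gz,\ldots,g^{k-1}z$, so the period of $w$ is at least $k$; by maximality it is at most $k$; hence $g^{k}$ acts trivially on a neighborhood $A\subset U$ of $z$, which after shrinking also has $A,gA,\ldots,g^{k-1}A$ pairwise disjoint and all disjoint from $h(A)\subset h(U)$. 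Running your symmetrization inside $A$ (around $z$ rather than $y$) then yields $t_n\in C_G(\{g\})$ with no structural input whatsoever, and the distinctness argument goes through verbatim because the relevant $h$-images land in $h(U)$, away from all supports. One more small point you elide: to get infinitely many pairwise disjoint nonempty open subsets of $A$ you should note that the support-density hypothesis forbids isolated points (an element supported in a singleton would contradict injectivity), which is how the paper justifies that step.
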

\begin{proof} By continuity of $g,h$ there exists a neighborhood $U$ of $y$ such that the sets $U$, $g(U)$, $\ldots$, $g^{K-1}(U)$ do not intersect $h(U)$. Let $k\leqslant K$ be the maximum of periods of points inside $U$ under the action of $g$ and let $z\in U$ be of period $k$.  By continuity of $g$, there exists a neighborhood $A$ of $z$ such that $g^k$ acts trivially on $A$. Making $A$ smaller if necessary we may achieve that the sets $g^l(A)$, $l=0,\ldots k-1$, are pairwise disjoint. Notice that they do not intersect $h(A)$, since $A\subset U$.

Further, if $X$ has an isolated point $x$, considering the open set $A=\{x\}$ we obtain that there is no homeomorphism $t$ of $X$ with $\varnothing\neq\suppg(t)\subset A$. This contradicts to the conditions of the lemma. Thus, $X$ has no isolated points. Using the latter and that $X$ is a Hausdorff space one can construct a sequence of pairwise disjoint open sets $U_j\subset A\setminus \{z\}$ convergent to $z$. By the conditions of the lemma, there exists a sequence of elements $s_j$ with $\varnothing\neq\suppg(s_j)\subset U_j$. Observe that for any $j$ the elements $g^ls_jg^{-l}$, $l=0,\ldots,k-1$, are pairwise commuting. Set $$t_j=\prod\limits_{l=0}^{k-1}(g^ls_jg^{-l}).$$ By construction, $t_j$ commutes with $g$ for each $j$ and $\suppg(t_j)$ are pairwise disjoint. Next, for each $j$ fix a point $y_j\in \suppg(s_j)$. Then one has: $$t_j^{-1} ht_j(y_j)=ht_j(y_j),\;\;t_l^{-1} ht_l(y_j)=h(y_j)\;\;\text{for}\;\;l\neq j.$$ Since $t_jy_j\neq y_j$ we obtain that the elements $t_j^{-1} ht_j$ are pairwise disjoint, which finishes the proof.
\end{proof}
Now, let $G=G_B$ be a full group of a simple Bratteli diagram $B=(V,E)$ (see Remark \ref{remark: full group}) and let $X=X_B$ be the path space of $B$. 
Recall that $G$ is an increasing union of finite groups $G_n$. 
For $n\in\mathbb N$, let $G_{n,\infty}$ be the subgroup of $G$ consisting of elements $g\in G$ such that for any infinite path $(e_1,e_2,\ldots)\in X$ one has $g(e_1,\ldots e_n,e_{n+1},\ldots)=(e_1,\ldots,e_n,e'_{n+1},e'_{n+2},\ldots)$, where $(e'_{n+1},e'_{n+2},\ldots)$ depends only on $(e_{n+1},e_{n+2},\ldots)$. Observe that $G_{n,\infty}$ commutes with $G_n$. In fact, $G_{n,\infty}$ is the centralizer of $G_n$, but we would not use this statement. 
Following the notation used in \cite{dm_jfa2013}, given $j<n$ and a path $(e_{j+1},e_{j+2},\ldots,e_n)$ joining a vertex from $V_j$ with a vertex from $V_n$, where $e_i\in E_i,j+1\leqslant i\leqslant n$, we write $$U(e_{j+1},e_{j+2},\ldots, e_n)=\{x\in X: x_i=e_i, i=j+1,\ldots,n\}.$$  
 \begin{lemma}\label{lemma: fpc G_n} For any $h\notin G_n$ one has: $\sharp\{t^{-1}ht\mid t\in G_{n,\infty}\}=\infty$.
 \end{lemma}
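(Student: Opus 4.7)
The plan is to construct, for each prescribed $J \geq 1$, explicit elements $t_1,\ldots, t_J \in G_{n,\infty}$ yielding $J$ pairwise distinct conjugates $t_j^{-1}ht_j$; since $J$ is arbitrary, this forces $\sharp\{t^{-1}ht : t \in G_{n,\infty}\} = \infty$. Let $m$ be the smallest integer with $h \in G_m$; by hypothesis $m \geq n+1$. Viewing $h$ as a family of permutations $\pi_v$ of $E(v_0, v)$, $v \in V_m$, the assumption $h \notin G_n$ means that for some $v \in V_m$ there is a path $\bar e = (e_1,\ldots, e_m) \in E(v_0, v)$ whose image $\pi_v(\bar e) = \bar f = (f_1,\ldots, f_m)$ witnesses the failure of $\pi_v$ to factor through level $n$. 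This splits into two sub-cases: \emph{(a)} $(f_{n+1},\ldots, f_m) \neq (e_{n+1},\ldots, e_m)$; or \emph{(b)} $(f_{n+1},\ldots, f_m) = (e_{n+1},\ldots, e_m)$ but there exists another $\bar e' = (e_1,\ldots, e_n, e'_{n+1},\ldots, e'_m) \in E(v_0, v)$ with $\pi_v(\bar e') = (f'_1,\ldots, f'_n, e'_{n+1},\ldots, e'_m)$ and $(f'_1,\ldots, f'_n) \neq (f_1,\ldots, f_n)$. Set $w = r(e_n) \in V_n$.

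Using the simplicity of $B$, find a level $M > m$ and a vertex $u \in V_M$ with $|E(v, u)| \geq J$, and fix $J$ distinct paths $\bar p_1,\ldots, \bar p_J \in E(v, u)$. For each $j = 1,\ldots, J$, define $t_j \in G_{n,\infty} \cap G_M$ to act as the identity except on $E(w, u)$, where it performs the transposition of two concatenated paths:
\begin{itemize}
\item in case (a): $(e_{n+1},\ldots, e_m)\, \bar p_1 \longleftrightarrow (e_{n+1},\ldots, e_m)\, \bar p_j$;
\item in case (b): $(e_{n+1},\ldots, e_m)\, \bar p_j \longleftrightarrow (e'_{n+1},\ldots, e'_m)\, \bar p_j$.
\end{itemize}
Each $t_j$ lies in $G_{n,\infty}$ because the transposition is specified inside $E(w, u)$ independently of the first $n$ edges of a path.

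A direct computation then separates the conjugates. In case (a), tracking $t_j^{-1}ht_j$ starting from the cylinder $(e_1,\ldots, e_m)\, \bar p_1$ one first obtains $(e_1,\ldots, e_m)\, \bar p_j$ via $t_j$, then $(f_1,\ldots, f_m)\, \bar p_j$ via $h$; the tail $(f_{n+1},\ldots, f_m)\, \bar p_j$ differs from both paths transposed by $t_j$ precisely because $(f_{n+1},\ldots, f_m) \neq (e_{n+1},\ldots, e_m)$, so $t_j^{-1}$ leaves the result alone, and the $J$ outputs $(f_1,\ldots, f_m)\, \bar p_j$ are visibly distinct. In case (b), applied to $(e_1,\ldots, e_m)\, \bar p_j$, the same procedure yields $(f'_1,\ldots, f'_n, e_{n+1},\ldots, e_m)\, \bar p_j$, whereas for $i \neq j$ the conjugating pair $t_i, t_i^{-1}$ is not triggered by this path and one obtains $(f_1,\ldots, f_n, e_{n+1},\ldots, e_m)\, \bar p_j$; the two outputs differ because $(f_1,\ldots, f_n) \neq (f'_1,\ldots, f'_n)$. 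The main subtlety will be the case split and the verification that no ``collateral'' tail produced by $h$ accidentally coincides with one of the paths swapped by $t_j$, which is precisely what forces the separate treatment of (a) and (b).
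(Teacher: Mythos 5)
Your constructions in cases (a) and (b), and the bookkeeping that tracks a cylinder through $t_j$, $h$, $t_j^{-1}$ to separate the conjugates, are correct and run parallel to the paper's argument. The gap is in the case analysis itself: it is not true that $h\notin G_n$ forces a witness of your type (a) or (b). Your case (b) insists that the second path $\bar e'$ lie in the \emph{same} set $E(v_0,v)$ as $\bar e$, but $h\in G_n$ is a condition on how the permutations $\pi_v$, $v\in V_m$, glue together, not a condition on each $\pi_v$ separately. Concretely, let $w\in V_n$ admit two distinct paths $\bar a\neq\bar b$ from $v_0$, and let $v\neq v'\in V_m$ both be connected to $w$. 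Define $h\in G_m$ to act trivially on $E(v_0,v)$ and, on $E(v_0,v')$, to swap $\bar a\,\bar c\leftrightarrow \bar b\,\bar c$ for every $\bar c\in E(w,v')$ (fixing all other paths). Then $h\notin G_n$, since a path with prefix $\bar a$ passing through $v$ keeps its prefix while one passing through $v'$ gets prefix $\bar b$; yet no edge beyond level $n$ is ever altered (your case (a) fails), and within each single $E(v_0,v'')$ the permutation is induced by a permutation of length-$n$ prefixes applied uniformly to all continuations, so no pair $\bar e,\bar e'\in E(v_0,v'')$ of your type (b) exists. Thus your dichotomy misses exactly those $h$ whose induced prefix permutations are consistent within each $v\in V_m$ but differ from vertex to vertex, and for such $h$ your proof never gets started.

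The paper's case b) avoids this by asking only for two infinite paths $x^{(1)},x^{(2)}$ with the same length-$n$ prefix whose $h$-images have different length-$n$ prefixes, with no requirement that they agree (or even meet the same vertex) at level $m$; agreement far out is then arranged afterwards, which is where simplicity of $B$ is used. Your argument is repaired the same way: allow $\bar e'\in E(v_0,v')$ with possibly $v'\neq v$, use simplicity to find $M>m$ and $u\in V_M$ connected to both $v$ and $v'$ with at least $J$ paths from each, and let $t_j$ transpose $(e_{n+1},\ldots,e_m)\,\bar p_j\leftrightarrow(e'_{n+1},\ldots,e'_m)\,\bar p_j'$ with $\bar p_j\in E(v,u)$, $\bar p_j'\in E(v',u)$ chosen pairwise distinct; your computation then goes through verbatim and separates the $J$ conjugates.
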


\begin{proof}
 Let $n\in\mathbb N$ and $h\notin G_n$. From the structure of the groups $G_n$ one can deduce that there are two (non-exclusive) possibilities.
 \vskip 0.2cm\noindent ${\bf a)}$ There exists $x=(x_1,x_2,\ldots)\in X_B$ with $hx=y=(y_1,y_2,\ldots)\in X_B$ such that $(y_{n+1},y_{n+2},\ldots)\neq (x_{n+1},x_{n+2},\ldots)$. Let $m$ be the minimal number greater than $n$ such that $x_m\neq y_m$. Let $v=r(x_m)\in V_m$ be the end vertex of the edge $x_m$.  The edges and vertices of all infinite paths $(e_{m+1},e_{m+2},\ldots)$ starting at the vertex $v$  form some Bratteli diagram $B_v$. One can show that $B_v$ is simple, given that $B$ is simple, and so the full group $\widetilde G=G_{B_v}$ of $B_{v}$ acts minimally on the space $\widetilde X=X_{B_v}$. Set $\widetilde x=(x_{m+1},x_{m+2},\ldots)$. Then there exists a sequence of elements $\widetilde t_m\in \widetilde G$ such that the points $\widetilde t_m\widetilde x\in \widetilde X$ are pairwise distinct. Introduce the elements $t_k\in G_B$ by:
 \begin{equation}t_k(z)=\left\{\begin{array}{ll}(z_1,\ldots,z_m,\widetilde t_k(z_{m+1},z_{m+2},\ldots),&\text{if}\;\;z_m=x_m,\\ z,&\text{otherwise}.
 \end{array}\right.
 \end{equation}
 Then $t_k\in G_{n,\infty}$ for every $k$, since its action depends only on the edges starting from $m$th with $m>n$. By construction, $t_k(y)=y$ for all $k$ and the points $t_k(x)$ are pairwise distinct.  It follows that for every $k$ one has: $t_kht_k^{-1}(t_k(x))=y$, from which we conclude that the elements $t_kht_k^{-1}$ are pairwise distinct.
 \vskip 0.2cm\noindent
 ${\bf b)}$ There exists $x^{(1)},x^{(2)}\in X_B$ starting with the same prefix of length $n$, \ie $x^{(1)}_j=x^{(2)}_j$ for $1\leqslant j\leqslant n$, but such that $y^{(i)}=hx^{(i)},i=1,2,$
 have different prefixes of length $n$, \ie $y^{(1)}_j\neq y^{(2)}_j$ for some $1\leqslant j\leqslant n$. Without loss of generality we may assume that $a)$ does not hold. In particular, $y^{(i)}_j=x^{(i)}_j$ for $i=1,2$ and all $j\geqslant n+1$. Fix $N>n$ such that $h\in G_N$. Without loss of generality we may assume that  $x^{(1)}_j=x^{(2)}_j$ for all $j\geqslant N+1$. Let $s\in G$ be the transposition of the finite paths $(x^{(1)}_{n+1},\ldots,x_N^{(1)})$ and $(x^{(2)}_{n+2},\ldots,x_N^{(2)})$, \ie $$s(x_1,\ldots, x_n,x_{n+1}^{(i)},\ldots,x_{N}^{(i)},x_{N+1},x_{N+2},\ldots)=(x_1,\ldots, x_n,x_{n+1}^{(3-i)},\ldots,x_{N}^{(3-i)},x_{N+1},x_{N+2},\ldots),$$ $i=1,2,$ for any $x_1,x_2,\ldots,x_n,x_{N+1},x_{N+2},\ldots$, and $s$ acts trivially on all other points of $X_B$.


 Pick a disjoint sequence $\{U_j\}_{j\in\mathbb N}$ of non-empty $G_n$-invariant clopen subsets of the set  $U((x_{n+1}^{(1)}$, $x_{n+2}^{(1)}$, $\ldots$, $x_N^{(1)}))$. Introduce elements $t_j\in G$, $j\in \mathbb N$, as follows. For $x\in X_B$ set
 $$ t_j(x)=\left\{\begin{array}{ll}s(x),& x\in U_j\cup s(U_j),\\ x,&\text{otherwise}.\end{array}\right.$$ Then $t_j^{-1}ht_j(x)$ starts with the prefix $(y_1^{(2)},y_2^{(2)},\ldots,y_N^{(2)})$ for $x\in U_j\cap U((x_1^{(1)},\ldots,x_N^{(1)}))$ and with the prefix $(y_1^{(1)},y_2^{(1)},\ldots,y_N^{(1)})$ for $x\in U((x_1^{(1)},\ldots,x_N^{(1)}))\setminus U_j$.
 It follows that $t_j^{-1}ht_j$ are pairwise disjoint.\end{proof}

 \noindent Lemma \ref{lemma: fpc G_n} implies that
 \begin{equation}\label{eq: fpc G_n}\fpc(g)\subset G_n\;\;\text{for any}\;\;g\in G_n.\end{equation}

 However, there is also another useful way of defining ``complementary" subgroups in $G$. Let $\suppg(g)$ stand for the support of the action of an element $g\in G$ on $X$ (not to be confused with $\suppe$). Let us define for a clopen set $A$ by $G_A$ the subgroups of all elements in $G$ supported inside $A$: $G_A=\{g\in G:\suppg(g)\subset A\}$. Clearly, $G_A$ commutes with $G_{X\setminus A}$. In fact, $G_{X\setminus A}$ is the centralizer of $G_A$ for each $A$.
  \begin{lemma}\label{lemma: fpc G_A} Let $A\subset X$ be a clopen set. Then for any $h\notin G_A$ one has: $\sharp\{t^{-1}ht\mid t\in G_{X\setminus A}\}=\infty$.
 \end{lemma}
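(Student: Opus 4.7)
The plan is to localize the argument: since $h \notin G_A$ must move some point of $X \setminus A$, I will construct infinitely many distinct conjugates $t_j^{-1} h t_j$ by choosing a sequence of elements $t_j \in G_{X \setminus A}$ with pairwise disjoint supports clustered near such a point. The combinatorial/dynamical hypothesis I will repeatedly invoke is the one already used in Lemma \ref{lemma: fpc preserves orbits}, namely that every non-empty clopen subset of $X = X_B$ supports a non-trivial element of $G = G_B$ (which holds for simple $B$, via finite-level transpositions inside cylinder sets), and that $X_B$ is a Cantor space, hence has no isolated points.

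\emph{Setup.} First I pick $x \in X \setminus A$ with $h(x) \neq x$, which exists because $h \notin G_A = \{g \in G : \suppg(g) \subset A\}$ means $h$ does not act as the identity on $X \setminus A$. Using continuity of $h$ and the Hausdorff property, I choose a clopen neighborhood $U$ of $x$ with $U \subset X \setminus A$ and $U \cap h(U) = \emptyset$. Since $X$ has no isolated points, I then build inductively a sequence of pairwise disjoint non-empty clopen sets $U_j \subset U$ shrinking to $x$ (for instance, by splitting off a cylinder subneighborhood at each level), and, using continuity of $h$ together with $h(x) \notin U$, I may arrange after passing to a tail that $h(U_j) \cap U = \emptyset$ for every $j$. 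For each $j$, I pick a non-trivial $t_j \in G$ with $\suppg(t_j) \subset U_j$; since $U_j \subset X \setminus A$, automatically $t_j \in G_{X \setminus A}$. I also fix a point $y_j \in U_j$ with $t_j(y_j) \neq y_j$.

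\emph{Distinguishing the conjugates.} I evaluate $t_j^{-1} h t_j$ and $t_k^{-1} h t_k$ at $y_j$, for $k \neq j$. On the one hand, $t_j(y_j) \in U_j \subset U$, so $h(t_j(y_j)) \in h(U_j) \subset X \setminus U$, where $t_j^{-1}$ is the identity; hence
\[
t_j^{-1} h t_j(y_j) = h(t_j(y_j)).
\]
On the other hand, $y_j \in U_j$ lies outside $U_k$, so $t_k(y_j) = y_j$, and $h(y_j) \in h(U_j) \subset X \setminus U$ lies outside $U_k$ as well, so $t_k^{-1} h t_k(y_j) = h(y_j)$. Injectivity of $h$ together with $t_j(y_j) \neq y_j$ yields $h(t_j(y_j)) \neq h(y_j)$, so the two conjugates differ at $y_j$. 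Therefore the family $\{t_j^{-1} h t_j\}_{j \in \N}$ contains infinitely many distinct elements, proving the lemma.

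\emph{Expected difficulty.} The only non-routine ingredient is the local richness of $G$, i.e.\ the existence of non-trivial elements supported in arbitrarily small clopen sets of $X$; this is where simplicity of $B$ enters, via the fact that every non-empty clopen in $X_B$ contains a cylinder $U(e_1,\ldots,e_n)$, and any transposition of two paths extending $e_1,\ldots,e_n$ gives an element of some $G_m$ supported inside that cylinder. Beyond this standard input, the argument is a direct disjointness computation exploiting that $h$ sends $y_j$ and $t_j(y_j)$ out of $U$ while the $t_k$'s are supported inside $U$.
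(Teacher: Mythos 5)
Your argument is correct, but it follows a genuinely different route from the paper's proof. The paper takes a point $x$ with $hx\neq x$ and $hx\in X\setminus A$, chooses a clopen $Y\subset X\setminus A$ with $hx\in Y$ and $x\notin Y$, and invokes the fact that $G_Y$ is the full group of a simple subdiagram, hence acts \emph{minimally} on $Y$: the $G_Y$-orbit of $hx$ is infinite, and since every $g_i\in G_Y$ fixes $x$, the conjugates $g_ihg_i^{-1}$ send the single test point $x$ to the pairwise distinct points $g_ihx$. You instead place the conjugators near the source point $x$: a clopen $U\ni x$ inside $X\setminus A$ with $h(U)\cap U=\emptyset$, infinitely many disjoint clopen $U_j\subset U$, non-trivial $t_j$ supported in $U_j$, and you separate the conjugates by evaluating at the varying points $y_j$ — this is precisely the disjoint-support technique of Lemma \ref{lemma: fpc preserves orbits} and of the alternative argument for \eqref{eq: fpc G_A} sketched in the remark after the lemma, rather than the minimality argument the paper actually uses here. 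What your variant buys is elementarity and generality: it only needs the ``local richness'' hypothesis (every non-empty open set supports a non-trivial element of the group), so it proves the statement for any group of homeomorphisms of a perfect Hausdorff space with that property; the paper's proof is shorter once one quotes minimality of $G_Y\curvearrowright Y$ (a standard consequence of simplicity, cf.\ Remark \ref{remark: full group}), and needs only one clopen set and one test point. Both arguments rest on the same implicit non-degeneracy of $X_B$ (infinite path space), since local richness and infinitude of $G_Y$-orbits come from the same source. Two minor points: your ``passing to a tail'' is unnecessary, because $U_j\subset U$ and $h(U)\cap U=\emptyset$ already force $h(U_j)\cap U=\emptyset$; and your justification of local richness should note that a cylinder $U(e_1,\ldots,e_n)$ admits two distinct finite extensions to a common vertex at some deeper level (this is where simplicity plus infinitude of $X_B$ enter), after which the transposition you describe exists.
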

\begin{proof} Let $h\notin G_A$. Then there exists $x\in X$ such that $hx\neq x$ and $hx\in X\setminus A$. Let $Y\subset X\setminus A$ be a clopen set such that $hx\in Y,x\notin Y$. The subgroup $G_Y\subset G$ of elements supported on $Y$ is the full group of a Bratteli diagram $\widetilde B$ formed by edges and vertices of all infinite paths $(e_1,e_2,\ldots)$ belonging to $Y$. By our assumption the initial diagram $B$ is simple (see Remark \ref{remark: full group}). This implies  that the diagram $\widetilde B$ is simple as well, and so the action of $G_Y$  on $Y$ is minimal. In particular, the orbit of $hx$ under $G_Y\subset G_{X\setminus A}$ is infinite. Let $\{g_i\}\in G_Y$ be a sequence of elements such that the points $g_ihx$ are pairwise distinct. Observe that $g_i^{-1}$ acts on $x$ trivially, since $x\notin Y$, and so $g_ihg_i^{-1}x=g_ihx$. We obtain that the elements $g_ihg_i^{-1}$ are pairwise distinct. This finishes the proof.
\end{proof}

\noindent We obtain that
\begin{equation}\label{eq: fpc G_A}\fpc(g)\subset G_{\suppg(g)}\;\;
\text{for any}\;\;g\in G.\end{equation}
\begin{remark}[Alternative proof of  \eqref{eq: fpc G_A}] 
It is straightforward to verify that full groups of simple Bratteli diagrams satisfy the conditions of Lemma \ref{lemma: fpc preserves orbits}. Moreover, all $g\in G$ are of finite order. If $\suppg(h)\not\subset \suppg(g)$ then $h$ does not preserve orbits of $g$ and so $h\notin\fpc(g)$ by Lemma \ref{lemma: fpc preserves orbits}. 
\end{remark}

Recall that for any $n\in\mathbb N$ the group $G_n$ is equal to the direct product
$\prod_{v\in V_n}G_v,$ where $G_v$ is the group of all permutations of paths joining the root vertex $v_0$ with $v$. A transposition in $G_n$ is an element interchanging two different paths from $v_0$ to some $v\in V_n$ and acting trivially on all other paths. Fix a $G$-invariant von Neumann subalgebra $P\subset L(G)$ and let $E:L(G)\to P$ be the conditional expectation preserving the trace $\tau(m)=\langle m\delta_e,\delta_e\rangle$. \begin{lemma}\label{lemma: E(permutations)} For any $g\in G$ with $g^2=e$ one has either $E(g)=g$ or $E(g)=0$.
\end{lemma}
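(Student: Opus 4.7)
The plan is to prove the lemma by induction on the number $m$ of disjoint cycles of $g$. Fix $n$ with $g\in G_n$; since $g^2=e$, inside $G_n=\prod_{v\in V_n}G_v$ the element $g$ factors as $g=\tau_1\cdots\tau_m$, a product of disjoint transpositions of pairs of level-$n$ cylinders with pairwise disjoint supports $B_i=\suppg(\tau_i)$. The $\tau_i$ commute and generate an abelian subgroup of $G_n$ isomorphic to $(\mathbb{Z}/2)^m$; for $T\subset\{1,\ldots,m\}$ write $g_T=\prod_{i\in T}\tau_i$, so that $g_\emptyset=e$ and $g_{\{1,\ldots,m\}}=g$.

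The key localization I will use is that $E(g_T)\in\mathrm{span}\{g_{T'}:T'\subset T\}$ for every $T$. This follows by combining $\fpc(g_T)\subset G_n$ (from \eqref{eq: fpc G_n}) and $\fpc(g_T)\subset G_{\suppg(g_T)}$ (from \eqref{eq: fpc G_A}) with Lemma \ref{lemma: fpc preserves orbits}: any $h\in\fpc(g_T)$ preserves every two-point orbit $\{x,g_Tx\}$, so on each atom $B_i$ (with $i\in T$) it either fixes or swaps the two constituent level-$n$ cylinders, that is, acts there as the identity or as $\tau_i$; hence $h=g_{T'}$ for some $T'\subset T$. Writing $E(g)=\sum_Tc_Tg_T$, note also that each $c_T$ is real (since $E(g)^*=E(g^{-1})=E(g)$) and $c_\emptyset=\tau(E(g))=\tau(g)=0$.

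Now I induct on $m$; the case $m=0$ is trivial. For the inductive step I split into two cases. If $E(g_T)=g_T$ for some $\emptyset\neq T\subsetneq\{1,\ldots,m\}$, then $g_T\in P$ and $g=g_Tg_{T^c}$ with $T^c$ also proper and nonempty; by $P$-bimodularity $E(g)=g_TE(g_{T^c})$, and the inductive hypothesis gives $E(g_{T^c})\in\{0,g_{T^c}\}$, so $E(g)\in\{0,g\}$. Otherwise, the inductive hypothesis gives $E(g_T)=0$ for every proper nonempty $T$. Applying $E$ to $E(g)=\sum_Tc_Tg_T$ and using $E\circ E=E$ kills every contribution except $T=\{1,\ldots,m\}$, producing $c_{\{1,\ldots,m\}}E(g)=E(g)$, hence either $E(g)=0$ or $c_{\{1,\ldots,m\}}=1$. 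In the latter subcase I compute $g\cdot E(g)=e+\sum_{\emptyset\neq T\subsetneq}c_Tg_{T^c}$ and apply $E$ once more: every $E(g_{T^c})$ vanishes by the hypothesis of this case, so $E(gE(g))=e$, while $P$-bimodularity gives $E(gE(g))=E(g)^2$; thus $E(g)^2=e$. Matching the coefficient of $e$ in $E(g)^2=\sum_{T,T'}c_Tc_{T'}g_{T\triangle T'}$ yields $\sum_Tc_T^2=1$, which together with $c_{\{1,\ldots,m\}}=1$ and reality of the $c_T$ forces every other $c_T=0$, giving $E(g)=g$.

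The main technical point is the localization step, particularly verifying that an $h\in G_n$ supported inside $\suppg(g_T)$ that preserves each two-point orbit of $g_T$ really must act on each atom $B_i$ independently as identity or as $\tau_i$; the delicate case is when several atoms $B_i$ share the same level-$n$ vertex $v_i$, where a priori $h$ could mix their constituent paths, but the orbit-preservation forces $h$ to send each cylinder of $B_i$ back into $B_i$. Once this is in place, the remainder is purely formal manipulation inside the commutative subalgebra generated by the $\tau_i$.
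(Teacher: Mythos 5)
Your proof is correct and follows essentially the same route as the paper's: induction on the number of disjoint level-$n$ transpositions, the localization $\fpc(g_T)\subset\{g_{T'}:T'\subset T\}$ obtained from \eqref{eq: fpc G_n}, \eqref{eq: fpc G_A} and Lemma \ref{lemma: fpc preserves orbits}, and the same dichotomy between the existence of a proper nonempty $T$ with $E(g_T)=g_T$ (where $P$-bimodularity reduces to the inductive hypothesis) and the case where all such expectations vanish. The one genuine difference is in the second case: the paper asserts that applying $E$ to $E(g)=\sum_{S}\lambda_S\prod_{i\in S}s_i$ yields $\lambda_S=0$ for all proper $S$ together with $\lambda_{I_k}^2=\lambda_{I_k}$, whereas a single application of $E$ only gives $\lambda_S(1-\lambda_{I_k})=0$, which says nothing about the proper $S$ when $\lambda_{I_k}=1$; your additional step --- computing $E(gE(g))=E(g)^2=e$, using self-adjointness of $E(g)$ to see the coefficients are real, and matching the coefficient of $e$ to get $\sum_T c_T^2=1$ --- closes exactly this point, so your write-up is in fact more complete than the paper's at that step. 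Your explicit check that an element of $G_n$ supported in $\suppg(g_T)$ which preserves every two-point orbit of $g_T$ must act on each atom as the identity or as $\tau_i$ (injectivity ruling out the mixed behaviour on the two cylinders of an atom) is a correct expansion of what the paper leaves implicit, and your formulation also absorbs the paper's separate base case $k=1$ into the general argument.
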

\begin{proof} Observe that for any $n\in\mathbb N$ any $g\in G_n$ with $g^2=e$ can be written as a product of disjoint transpositions from $G_n$. Thus, we need to show that for any $n,k\in\mathbb N$ and any collection of $k$ disjoint transpositions $s_1,\ldots,s_k\in G_n$ one has: either $E(s_1\cdots s_k)=0$ or $E(s_1\cdots s_k)=s_1\cdots s_k$.
For every $n$ we will prove the statement by induction on $k$.
\vskip 0.2cm\noindent
 {\bf Base of induction: $k=1$} By \eqref{eq: fpc G_n} and \eqref{eq: fpc G_A} we have for any $s\in G_n$: $\fpc(s)\subset G_n\cap G_{\suppg(s)}$. 
 Let $s\in G_n$ be a transposition. It is not hard to see that $G_n\cap G_{\suppg(s)}=\{e,s\}$. It follows that $\fpc(s)=\{e,s\}$. Thus, $E(s)=\lambda+\mu s$. Since $E$ preserves the trace $\tau$ we have $\lambda=\tau(E(s))=\tau(s)=0$. Since $E(E(s))=E(s)$ we obtain that $\mu^2 s=\mu s$. From the above we deduce that either $E(s)=s$ or  $E(s)=0$.
\vskip 0.2cm\noindent
{\bf Step of induction.} Assume that the statement is true for any number of disjoint transpositions less than $k$, where $k>1$. Let $s_1,\ldots, s_k\in G_n$ be $k$ pairwise disjoint transpositions.
\vskip 0.2cm\noindent
Case (i): there exists a non-empty subset $S\subsetneq I_k=\{1,\ldots,k\}$  such that $E(\prod_{i\in S}s_i)=\prod_{i\in S}s_i$. Then one has:
$$E(\prod_{i\in I_k}s_i)=E(\prod_{i\in S}s_i\cdot\prod_{i\in I_k\setminus S}s_i)=\prod_{i\in S}s_i\cdot E(\prod_{i\in I_k\setminus S}s_i),$$ since $\prod_{i\in S}s_i\in P$. By induction, $E(\prod_{i\in I_k\setminus S}s_i)$ is equal to either $0$ or $\prod_{i\in I_k\setminus S}s_i$ and the claim of induction follows.
\vskip 0.2cm\noindent
Case (ii): for every non-empty subset $S\subsetneq I_k=\{1,\ldots,k\}$  one has $E(\prod_{i\in S}s_i)=0$. Using \eqref{eq: fpc G_n} and \eqref{eq: fpc G_A} and Lemma \ref{lemma: fpc preserves orbits} we obtain that $\fpc(\prod_{i\in I_k}s_i)\subset \{\prod_{i\in S}s_i:S\subset I_k\}$. We obtain that 
$$E(\prod_{i\in I_k}s_i)=\sum\limits_{S\subset I_k}\lambda_S \prod_{i\in S}s_i,$$ where $\lambda_S$ are some constants. Since $E$ preserves $\tau$ we have $\lambda_\varnothing=0$.  Applying $E$ to the above formula we obtain that $\lambda_S=0$ for all $S\subsetneq I_k$ and $\lambda_{I_k}^2=\lambda_{I_k}$. It follows that either $E(\prod_{i\in I_k}s_i)=0$ or $E(\prod_{i\in I_k}s_i)=\prod_{i\in I_k}s_i$.
\end{proof}
\begin{lemma}\label{lemma: P contains G'}
One has either $P=\mathbb C$ or $P\supset L(G')$.
\end{lemma}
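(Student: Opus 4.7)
The plan is to show that if $P \ne \mathbb{C}$ then $L(G') \subseteq P$. Since $\tau(xu_g^*) = \tau(xE(u_g)^*)$ by Proposition~\ref{prop: basic properties of c.e. related to the ISR property}, the hypothesis $P \ne \mathbb{C}$ yields some $e \ne g \in G$ with $E(u_g) \ne 0$. The proof then splits into two main tasks: first, produce a single non-trivial involution $s \in G$ with $u_s \in P$; second, propagate this one involution to all of $G'$ using $G$-invariance together with the simplicity of $G'$.

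For the first task (which is the main obstacle), if $g$ itself is an involution then Lemma~\ref{lemma: E(permutations)} immediately gives $E(u_g)=u_g$, so $u_g \in P$ and we are done. In general, Lemmas~\ref{lemma: fpc G_n} and~\ref{lemma: fpc G_A} place $E(u_g)$ in the finite-dimensional algebra $L(\fpc(g))$ with $\fpc(g) \subseteq G_n \cap G_{\suppg(g)}$, and $G$-invariance of $P$ ensures that every conjugate $E(u_{hgh^{-1}}) = u_h E(u_g) u_h^{-1}$ also lies in $P$. The idea is to combine these $G$-translates using the closure of $P$ under products and suitable averaging, so as to isolate a non-zero component of $P$ supported on involutions in $\fpc(g)$; Lemma~\ref{lemma: E(permutations)} then forces $u_s \in P$ for at least one non-trivial involution $s$. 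The delicate point is carrying out the combinatorics inside the finite subgroup $\fpc(g)$ carefully enough to guarantee that, after these manipulations, the involution-supported part of $E(u_g)$ does not vanish identically.

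For the second task, fix a non-trivial involution $s$ with $u_s \in P$. By $G$-invariance and closure of $P$ under products, $u_n \in P$ for every $n$ in the normal subgroup $N \trianglelefteq G$ generated by $s$. Since $B$ is simple the action of $G$ on $X_B$ is faithful and minimal (Remark~\ref{remark: full group}(1)), forcing the centre of $G$ to be trivial, so some $t \in G$ satisfies $tst^{-1}\ne s$. Then the element $s \cdot tst^{-1}$ is a non-trivial product of two involutions of the same parity, and hence lies in $N \cap G'$. By Remark~\ref{remark: full group}(2) the simplicity of $B$ implies that $G'$ is simple, so $N \cap G'$ (being normal in $G'$) must equal all of $G'$. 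Therefore $G' \subseteq N$, which gives $u_h \in P$ for all $h \in G'$, and hence $L(G') \subseteq P$, completing the proof.
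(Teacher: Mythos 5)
Your second task is fine and matches the paper's Case (i): once a single non-trivial $s$ with $u_s\in P$ is found, the set $H=\{h\in G:E(u_h)=u_h\}$ is a non-trivial normal subgroup, it meets $G'$ non-trivially because the centre of $G$ is trivial (indeed $s\cdot tst^{-1}=s^{-1}tst^{-1}=[s^{-1},t]\in H\cap G'$), and simplicity of $G'$ forces $G'\subseteq H$.

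The genuine gap is in your first task, which is the heart of the lemma. From $E(u_g)\neq 0$ for some non-involution $g$ you propose to ``combine $G$-translates using products and suitable averaging, so as to isolate a non-zero component of $P$ supported on involutions in $\fpc(g)$,'' and you yourself flag the delicate point without resolving it. There is no mechanism here: restricting an element of $P$ to its Fourier coefficients over the involutions of $\fpc(g)$ is not an operation that stays inside $P$, products of translates of $E(u_g)$ spread the support rather than concentrate it on involutions, and averaging over conjugates can perfectly well kill the part you want. Nothing in Lemmas \ref{lemma: fpc G_n}, \ref{lemma: fpc G_A} or \ref{lemma: E(permutations)} produces an involution in $P$ from a general $g$ with $E(u_g)\neq 0$, so as written the argument does not close.

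The paper avoids this direction entirely by arguing the contrapositive. In its Case (ii) one assumes $E(u_s)=0$ for \emph{every} non-trivial involution $s$ (the only alternative allowed by Lemma \ref{lemma: E(permutations)}) and shows that then the character $\phi(g)=\tau(E(g)g^{-1})$ vanishes on all $g\neq e$: for any such $g$ one picks a clopen cylinder $X_v^{(n)}(p)$ moved by $g$ and involutions $s_k$ with pairwise disjoint supports inside it, so that the conjugates $g_k=s_kgs_k^{-1}$ satisfy that $g_kg_m^{-1}=s_k\cdot gs_kg^{-1}\cdot gs_mg^{-1}\cdot s_m$ is a product of four commuting disjointly supported involutions, hence itself a non-trivial involution, whence $\phi(g_kg_m^{-1})=0$; Lemma \ref{lemma: vanishing character from many conjugates} then gives $\phi(g)=0$ and $P=\mathbb C$. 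If you want to salvage your outline, you should replace your ``averaging'' step by this contrapositive argument (or supply an actual construction of an involution in $P$, which you have not done).
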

 \begin{proof} Let $v\in V_n$ for $n$ sufficiently large such that the number of paths between $v_0$ and $v$ is at least 5. In particular, $G_v$ contains two disjoint transpositions $s_1,s_2$. By Lemma \ref{lemma: E(permutations)} there are two possibilities. 
\vskip 0.2cm\noindent
Case (i): $E(g)=g$ for some $g\in G,g\neq e$, such that $g^2=e$. Since the set $H=\{h\in G:E(h)=h\}$ is a non-trivial normal subgroup, the center of $G$ is trivial, and $G'$ is simple we obtain that $H>G'$ and so $P\supset L(G')$.
\vskip 0.2cm\noindent
 Case (ii): $E(g)=0$ for all $g\in G,g\neq e$, such that $g^2=e$. Consider the character $\phi(g)=\tau(E(g)g^{-1})$ on $G$. Fix any $g\in G,g\neq e$. Let $n\in\mathbb N$ be such that $g\in G_n$. There exists $v\in V_n$ and a finite path $p\in E(v_0,v)$ such that $gp\neq p$. Find a sequence of elements $s_k$, $k\in\mathbb N$, with $$s_k^2=e,\;s_k\neq e,\; \suppg(s_k)\subset X_v^{(n)}(p),\;\;\text{and}\;\;\suppg(s_k)\cap \suppg(s_m)=\varnothing$$ for every $m\neq k$, $k,m\in\mathbb N$. Consider the elements $h_{k,m}=(s_kgs_k^{-1})\cdot (s_mgs_m^{-1})^{-1}$, $k\neq m$.  One can write:
 $$h_{k,m}=s_k\cdot gs_kg^{-1}\cdot gs_mg^{-1}\cdot s_m.$$ By construction, the elements $s_k,s_m, gs_kg^{-1},gs_mg^{-1}$ are of order two and have pairwise disjoint supports (with the latter two supported inside $gX_v^{(n)}(p)=X_v^{(n)}(gp)$). It follows that $h_{k,m}^2=e$ and $h_{k,m}\neq e$, for every $k\neq m$. 
 From the conditions of case $(ii)$ we obtain that  $\phi(h_{k,m})=0$ whenever $k\neq m$. Using Lemma \ref{lemma: vanishing character from many conjugates} we conclude that $\phi(g)=0$.
 Thus, $\phi$ is the regular character. We deduce that $E(g)=0$ for all $g\neq e$. It follows that $P=\mathbb C$.
 \end{proof}
 Let $g\in G$ and let $n\in\mathbb N$ be such that $g\in G_n$. Since $G_n=\prod_{v\in V_n}G_v$ is a direct product of a finite number of finite symmetric groups, there exists a subset $S\subset V_n$ and a collection of transpositions $s_v\in G_v,v\in S$, such that $g=\prod_{v\in S}s_v\cdot h,$ where $h\in G_n'<G'$. By Lemma \ref{lemma: P contains G'} either $P=\mathbb C$ or $P\supset L(G')$. In the latter case we have: $E(g)=E(\prod_{v\in S}s_v)h$. From Lemma \ref{lemma: E(permutations)} we derive that either $E(g)=0$ or $E(g)=\prod_{v\in S}s_v\cdot h=g$. Since this is true for all $g\in G$, by Proposition \ref{prop: criterion on P=L(H)} we obtain that $P=L(H)$ for some normal subgroup $H<G$.

\section{Groups with the ISR property from modified character approach}

In this section we construct various groups with the ISR property inspired by the character approach. Note that in all these examples, the non-factorizable character property does not hold.

\subsection{Checking ISR for groups without non-factorizable regular characters}\label{subsection: ISR groups without the factorizable property}

In this subsection, we prove the ISR property for  certain classes of groups without non-factorizable regular characters by exploring the structure of normal subgroups in them.

First, recall that in  \cite[Question 4.3]{aj}, it was  asked whether the ISR property for infinite groups is preserved under taking product. We expect that, given any finitely many non-amenable groups with trivial amenable radical and non-factorizable regular characters, their product group has the ISR property. Let us present some partial results in this direction.  

Recall that a group $G$ is called character rigid it the only indecomposable characters on $G$ are the regular one and the trivial one: $\IChar(G)=\{\delta_e,\textbf{1}_G\}$.
\begin{proposition}\label{prop: product of two groups with character rigidity has the ISR property}
Let $G$ and $H$ be infinite non-amenable character rigid groups. Then $G\times H$ has the ISR property.
\end{proposition}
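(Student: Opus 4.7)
The plan is to reduce the problem to a classification of characters on $\Gamma := G \times H$ via character rigidity. First observe that each of $G$ and $H$ is ICC and simple: for any non-trivial normal subgroup $N \lhd G$, the indicator $\mathbf 1_N$ is an indecomposable character, so must equal $\delta_e$ or $\mathbf 1_G$, forcing $N \in \{\{e\}, G\}$. Simplicity together with non-amenability gives trivial amenable radical, so Proposition~\ref{prop: examples of NSFC-simple groups}(1) combined with Theorem~\ref{theorem: NFRC consequences} yields the ISR property for $G$ and $H$, and together with simplicity this means the only $G$-invariant von Neumann subalgebras of $L(G)$ are $\mathbb C$ and $L(G)$, and similarly for $H$. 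Moreover $\Gamma$ is ICC and its amenable radical is trivial (its image in each factor is a normal amenable subgroup, hence trivial), so by \cite[Theorem~A]{aho} every $\Gamma$-invariant von Neumann subalgebra $P \subseteq L(\Gamma)$ is a subfactor.

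Given such $P$ with conditional expectation $E\colon L(\Gamma) \to P$, I would first analyze the corner subalgebras $Q_G := P \cap L(G)$ and $Q_H := P \cap L(H)$. Since $(g,e)$ and $(e,h)$ commute, Proposition~\ref{prop: basic properties of c.e. related to the ISR property}(2) gives $u_{(e,h)} E(u_{(g,e)}) u_{(e,h)}^* = E(u_{(g,e)})$, so $E(L(G)) \subseteq L(H)' \cap L(\Gamma) = L(G)$ (using that $L(H)$ is a factor). Combined with $E(L(G)) \subseteq P$, this shows that $E|_{L(G)}$ is the trace-preserving conditional expectation onto $Q_G$. The subalgebra $Q_G$ is $G$-invariant, so by the previous paragraph $Q_G \in \{\mathbb C, L(G)\}$, and symmetrically $Q_H \in \{\mathbb C, L(H)\}$.

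The decisive tool is the character $\phi(g,h) := \tau(E(u_{(g,h)})u_{(g,h)}^*) = \|E(u_{(g,h)})\|_2^2$ on $\Gamma$, which is indeed a character by \cite[Proposition~3.2]{jz}. The crucial structural input is that for ICC groups $G_1, G_2$ the indecomposable characters of $G_1 \times G_2$ are exactly the tensor products $\chi_1 \otimes \chi_2$ with $\chi_i \in \IChar(G_i)$: for an indecomposable $\chi$ the algebras $A := \pi_\chi(G)''$ and $B := \pi_\chi(H)''$ are commuting subfactors of the finite factor $M_\chi$ with $A \vee B = M_\chi$ (factoriality of $A$ follows from $Z(A)$ commuting with $A \vee B$), and $A$-bimodularity of the trace-preserving conditional expectation $E_A\colon M_\chi \to A$ applied to $b \in B \subseteq A'$ yields $E_A(b) \in Z(A) = \mathbb C$, so $\tau(ab) = \tau(a)\tau(b)$ and hence $M_\chi = A \bar\otimes B$ with $\chi$ a product of indecomposable characters on the factors. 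Combined with character rigidity this pins $\IChar(\Gamma)$ to $\{\delta_e,\, \mathbf 1_{G \times \{e\}},\, \mathbf 1_{\{e\}\times H},\, \mathbf 1_\Gamma\}$, so $\phi = a\delta_e + b\mathbf 1_{\{e\}\times H} + c\mathbf 1_{G\times\{e\}} + d\mathbf 1_{\Gamma}$ with non-negative coefficients summing to one.

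To finish, each alternative $Q_G \in \{\mathbb C, L(G)\}$ fixes $\phi(g,e)$ for $g \neq e$ at $0$ or $1$ respectively, and analogously for $H$; the resulting linear equations in $(a,b,c,d)$ force $\phi$ to equal exactly one of the four extremal characters. Hence $\|E(u_{(g,h)})\|_2 \in \{0,1\}$ for every $(g,h)$, and since $E(u_{(g,h)})$ is the $L^2$-projection of $u_{(g,h)}$ onto $L^2(P)$, a Pythagoras argument gives $E(u_{(g,h)}) \in \{0, u_{(g,h)}\}$. Proposition~\ref{prop: criterion on P=L(H)} then identifies $P = L(N)$ for the corresponding normal subgroup $N \in \{\{e\},\, G\times\{e\},\, \{e\}\times H,\, \Gamma\}$. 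The principal obstacle in this plan is the tensor-product description of $\IChar(\Gamma)$, which rests on the commuting-subfactors bimodularity argument sketched above and is the only non-routine analytic input.
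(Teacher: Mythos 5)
Your argument is correct, but it takes a genuinely different route from the paper's. The paper's proof first invokes the Chifan--Das--Sun decomposition to produce a normal subgroup $N\lhd G\times H$ with $L(N)=P\bar\otimes(P'\cap L(N))$, disposes of the degenerate cases $N\cap G=\{e\}$ or $N\cap H=\{e\}$ by a Goursat-type analysis of $N$, and in the remaining case plays the two characters of $P$ and of $P'\cap L(N)$ against each other through the orthogonality relation $\phi(s)\psi(s)=0$ on $N$. You bypass that machinery entirely: you identify $E|_{L(G)}$ with the trace-preserving conditional expectation onto the $G$-invariant corner $P\cap L(G)$ (the inclusion $E(L(G))\subseteq L(H)'\cap L(\Gamma)=L(G)$ and the bimodularity identity $\tau(E(x)y)=\tau(xy)$ for $y\in P\cap L(G)$ are exactly what is needed here), use the ISR property and simplicity of $G$ to force $P\cap L(G)\in\{\mathbb C,L(G)\}$, and then read off the single character $\phi(\gamma)=\|E(u_\gamma)\|_2^2$ from the four-element set $\IChar(G\times H)$; the $\|\cdot\|_2$-Pythagoras step gives $E(u_\gamma)\in\{0,u_\gamma\}$ and Proposition~\ref{prop: criterion on P=L(H)} finishes. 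Both proofs rest on the same essential input, namely that indecomposable characters of a product split as products of indecomposable characters of the factors; the paper imports this from Bekka--Francini, while you re-derive it via the commuting-subfactors argument ($Z(A)\subseteq Z(M_\chi)=\mathbb C$, then $E_A(b)\in Z(A)$ gives multiplicativity of the trace), which is sound. What your route buys is a shorter case analysis --- the four combinations of corners each pin $\phi$ to a single extremal character immediately --- and no appeal to the $L(N)=P\bar\otimes Q$ decomposition; what the paper's route buys is a template the authors then iterate for $n$ factors and adapt to generalized wreath products, where the corner subalgebras are less directly controllable.

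Two small inaccuracies, neither fatal. First, for a non-trivial normal subgroup $N\lhd G$ the indicator $\mathbf 1_N$ is a character but need \emph{not} be indecomposable; your simplicity claim still follows because character rigidity forces $\mathbf 1_N=\lambda\delta_e+(1-\lambda)\mathbf 1_G$, and evaluating at any $g\notin N$ gives $\lambda=1$, i.e.\ $N=\{e\}$ unless $N=G$. Second, ICC-ness of $G$ and $H$ is asserted rather than proved; it does follow, since an infinite simple group with non-trivial FC-centre would be an FC-group with trivial centre, hence residually finite, contradicting infinite simplicity --- and the paper's own proof takes both facts for granted as well.
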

\begin{proof}
Recall that every extremal character on $G\times H$ splits as a product of extremal characters from $G$ and $H$ \cite[Corollary 2.13]{bfr}. Let $P\subseteq L(G\times H)$ be a $G\times H$-invariant von Neumann subalgebra. Note that since $G\times H$ has trivial amenable radical, we can deduce that $P$ is in fact a subfactor by \cite[Theorem A]{aho}. Repeating steps in the proof of Theorem \ref{theorem: NFRC consequences} (see also the proof of \cite[Theorem 3.1]{cds}), we may find some normal subgroup $N\lhd G\times H$ such that $L(N)=P\bar{\otimes}Q$, where $Q=P'\cap L(N)$.

First, assume that $N\cap G=\{e\}$ or $N\cap H=\{e\}$.\footnote{In fact, we can show that $N=\{e\}, G, H$ or $G\times H$. To prove that $P$ is a subgroup factor, we may assume that $N=G\times H$ since otherwise, it is clear that $P$ is of the desired form. However, we prefer to use an alternative approach, since it generalizes to a wider setting.}
Suppose that $N\cap G=\{e\}$ by symmetry. Then, given $(s, t)\in N$, the element $s\in G$ is uniquely determined by the element $t\in H$ and hence we may denote it by $\phi(t)$. It follows that $N=\{(\phi(h), h): h\in \pi_2(N)\}$, where $\pi_1: G\times H\rightarrow G$ and $\pi_2: G\times H\rightarrow H$ denote the coordinate projections. 

It is not hard to check that $\phi: \pi_2(N)\twoheadrightarrow \pi_1(N)$ is a group homomorphism. The cases $N=\{e\}$ and $N=H$ are trivial, so we will assume that $N\neq\{e\}$ and $N\neq H$. Since $\pi_1(N)\lhd G$, $\pi_2(N)\lhd H$ and $G, H$ are simple groups, we have that $\pi_1(N)=G$ and $\pi_2(N)=H$.
It follows that $N=\{(\phi(h), h): h\in H\}$. Since $N\lhd G\times H$, we deduce that for all $h\in H$, $\phi(h)\in Z(G)$ (the center of $G$). However, $Z(G)$ is trivial, since $G$ is non-abelian and simple. This implies that $N=H$ and finishes the proof in this case.

Now, assume that $N\cap G=\{e\}$ and $N\cap H=\{e\}$. Denote by $\phi(g)=\tau(E(g)g^{-1})$ and $\psi(g)=\tau(E'(g)g^{-1})$ for all $g\in G\times H$, where $E: L(G\times H)\rightarrow P$ and $E': L(G\times H)\rightarrow Q$ are the trace preserving conditional expectations. Note that both $\phi$ and $\psi$ are characters on $G\times H$ and we have shown before (while proving Theorem \ref{theorem: NFRC consequences}) that $\phi(s)\psi(s)=0$ for all $e\neq s\in N$.

By \cite[Theorem 2.12]{bfr} we may write $\phi=a\delta_{e}\otimes \delta_{e}+b1_G\otimes \delta_{e}+c\delta_{e}\otimes 1_H+d1_G\otimes 1_H$ and $\psi=a'\delta_{e}\otimes \delta_{e}+b'1_G\otimes \delta_{e}+c'\delta_{e}\otimes 1_H+d'1_G\otimes 1_H$. Here, $a,b,c,d,a',b',c',d'$ are all non-negative real numbers with $a+b+c+d=1=a'+b'+c'+d'$.

By our assumption, we may find $(s, e), (e, t)\in N$ with $s\neq e$ and $t\neq e$. Now, we may plug in $g=(s, e), (e, t)$ and $(s, t)$ respectively into the formula for $\phi(g)$ and $\psi(g)$, then use $0=\phi(g)\psi(g)$ for all $e\neq g\in N$ to deduce the following identities.
\begin{align*}
    0&=(b+d)(b'+d'),\\
    0&=(c+d)(c'+d'),\\
    0&=dd'.
\end{align*}

If $b+d=0$, equivalently, $b=d=0$, then $\phi=a\delta_e\otimes \delta_e+c\delta_e\otimes 1_H$. This implies that $\phi(gh)=0$ and hence $E(gh)=0$ for all $e\neq g\in G$ and all $h\in H$. Therefore, $P\subseteq L(H)$. Hence $P$ is a subgroup factor since $H$ has the ISR property.

If $b'+d'=0$, equivalently, $b'=d'=0$, then we deduce from the second equation above that either $c=d=0$ or $c'=d'=0$. When $c=d=0$, we can argue similarly as above to deduce that $P\subseteq L(G)$ and hence $P$ is a subgroup factor since $G$ has the ISR property. When $b'=d'=c'=0$, then $\psi=\delta_e\otimes \delta_e$ and hence $Q=\mathbb{C}$, thus $P=L(N)$.
\end{proof}
\begin{remark}
    When trying to generalize this proposition to the general setting, \ie two non-amenable groups with trivial amenable radical and non-factorizable regular character, and try to show the product has the ISR property, the main difficulty is that for a normal subgroup $N\lhd G\times H$, $(N\cap G)\times (N\cap H)$ may be a proper subgroup in $N$. It is well-known that the general structure for a subgroup in $G\times H$ is described by Goursat's lemma.
\end{remark}
With the base case considered above, we can do induction to get the following general result.
\begin{proposition}\label{prop: product of n groups with character rigidity has the ISR property}
Let $n\geq 2$ and let $G_i$ be an infinite non-amenable character rigid group for every $1\leq i\leq n$. Then $G:=\oplus_{1\leq i\leq n}G_i$ has the ISR property.
\end{proposition}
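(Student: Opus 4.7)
The plan is to proceed by strong induction on $n$, with the base case $n=2$ being Proposition \ref{prop: product of two groups with character rigidity has the ISR property}; the singleton case (a single character rigid group) is handled by applying Theorem \ref{theorem: NFRC consequences} together with Proposition \ref{prop: examples of NSFC-simple groups}(1). Given the inductive hypothesis and a $G$-invariant von Neumann subalgebra $P\subseteq L(G)$, I would first observe that each $G_i$ is ICC and simple with trivial center (any non-trivial normal subgroup $N\lhd G_i$ would give a character $\delta_N$ that is not a convex combination of $\delta_e$ and $1_{G_i}$, contradicting character rigidity), so $G$ has trivial amenable radical and $P$ is a subfactor by \cite[Theorem A]{aho}. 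The argument from \cite[Theorem 3.1]{cds} used in the proof of Theorem \ref{theorem: NFRC consequences} then produces a normal subgroup $N\lhd G$ with $L(N) = P\bar{\otimes}Q$, $Q = P'\cap L(N)$. By the classical description of normal subgroups of a direct product of non-abelian simple centerless groups, $N = \prod_{i\in S}G_i$ for some $S\subseteq\{1,\ldots,n\}$.

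If $|S|<n$, I would apply the inductive hypothesis directly to $N$: since $P\subseteq L(N)$ is $G$-invariant (hence $N$-invariant) and $N$ is a product of fewer than $n$ character rigid groups, there is $H\lhd N$ with $P = L(H)$; by the same structure theorem $H$ is itself a coordinate subproduct, hence normal in $G$. The remaining and main case is $|S|=n$, so $N=G$ and $L(G)=P\bar{\otimes}Q$. Following the proof of Theorem \ref{theorem: NFRC consequences}, I would define characters $\phi(g) = \tau(E(g)g^{-1})$ and $\psi(g) = \tau(E'(g)g^{-1})$ with $\phi(g) = \|E(u_g)\|_2^2$ and $\phi(g)\psi(g)=0$ for $g\neq e$. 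Iterating the tensor splitting of extremal characters \cite[Corollary 2.13]{bfr} and using character rigidity of each factor, the only extremal characters on $G$ are the $2^n$ indicators $\chi_T := \bigotimes_{i\in T}\delta_e\otimes\bigotimes_{i\notin T}1_{G_i}$ for $T\subseteq\{1,\ldots,n\}$. Writing $\phi = \sum_T c_T\chi_T$ and $\psi = \sum_T d_T\chi_T$, plugging group elements with single-coordinate support into $\phi\psi=0$ on $G\setminus\{e\}$ forces $T\cup T' = \{1,\ldots,n\}$ whenever $c_Td_{T'}>0$; setting $B_\phi := \bigcup_{T:\,c_T>0}T^c$ and $B_\psi$ analogously, this is equivalent to $B_\phi\cap B_\psi = \emptyset$.

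If $B_\phi=\emptyset$ (resp.\ $B_\psi=\emptyset$) then $\phi=\delta_e$ (resp.\ $\psi=\delta_e$), yielding $P=\mathbb{C}$ (resp.\ $P=L(G)$). Otherwise both are non-empty and disjoint, so $B_\phi\subsetneq\{1,\ldots,n\}$. For $g\notin H_\phi := \prod_{i\in B_\phi}G_i$, some coordinate $i_0$ with $g_{i_0}\neq e$ lies outside $B_\phi$; since every $T$ with $c_T>0$ satisfies $T^c\subseteq B_\phi$, it contains $i_0$, hence $\chi_T(g)=0$, giving $\phi(g)=0$ and so $E(u_g)=0$. Therefore $P\subseteq L(H_\phi)$, and since $H_\phi$ is a product of $|B_\phi|<n$ character rigid groups, the inductive hypothesis applied to $H_\phi$ yields $P = L(H')$ for some $H'\lhd H_\phi$, which is again a coordinate subproduct and hence normal in $G$. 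The main obstacle is precisely this last subcase: $\phi$ need not be $\{0,1\}$-valued, so one cannot read off a normal subgroup directly from $\phi$. The decisive observation is that even when $\phi$ is a genuine convex combination of several $\chi_T$'s, its support still lies inside a coordinate subgroup $H_\phi\subsetneq G$, which is exactly what is needed to feed into the induction.
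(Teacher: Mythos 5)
Your proposal is correct and follows essentially the same route as the paper: induction on $n$, passing to a subfactor via \cite[Theorem A]{aho}, producing $N\lhd G$ with $L(N)=P\bar{\otimes}Q$ as in the proof of Theorem \ref{theorem: NFRC consequences}, decomposing the characters $\phi,\psi$ into the $2^n$ extremal product characters via \cite[Corollary 2.13]{bfr}, and then a coefficient analysis that either forces $\psi=\delta_e$ or confines the support of $P$ to a proper coordinate subgroup to feed the induction. The only (harmless) cosmetic differences are that you identify $N$ upfront as a coordinate subproduct via the classification of normal subgroups of products of non-abelian simple groups, where the paper argues directly in the case $N\cap G_i=\{e\}$ using triviality of $Z(G_i)$, and your $B_\phi$, $B_\psi$ bookkeeping (landing in $\prod_{i\in B_\phi}G_i$ rather than $G\ominus G_j$) is an equivalent repackaging of the paper's case split.
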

\begin{proof}
We prove it by doing induction on $n$, the number of summands in the definition of $G$.

For $n=2$, this was proved in previous Proposition \ref{prop: product of two groups with character rigidity has the ISR property}.
Assume that the conclusion holds when the number of summands in $G$ is less than $n$, and let us prove it for $G$ with $n$-many summands. Let $P\subseteq L(G)$ be a $G$-invariant von Neumann subalgebra. We know that $P$ is in fact a subfactor by \cite[Theorem A]{aho} since $G$ has trivial amenable radical. We aim to show $P$ is a subgroup subfactor. 
As mentioned before (in proving Theorem \ref{theorem: NFRC consequences}, see also the proof of \cite[Theorem 3.1]{cds}), there exists some normal subgroup $N\lhd G$ with $L(N)=P\bar{\otimes} Q$, where $Q=P'\cap L(N)$.

First, suppose that for some $i$ we have $N\cap G_i=\{e\}$. Then we have $N=\{(\phi(h), h): h\in \pi'_i(N)\}$, where:
\begin{itemize}
 \item $\phi: \pi'_i(N)\twoheadrightarrow\pi_i(N)$ is a group homomorphism.
    \item $\pi_i: G\twoheadrightarrow G_i$ denotes the projection onto the $i$-th coordinate.
    \item $\pi'_i: G\twoheadrightarrow \cdots\oplus G_{i-1}\oplus G_{i+1}\oplus\cdots:=G\ominus G_i$ is the projection onto all coordinates except the $i$-th.
\end{itemize}
\noindent Since $N\lhd G$, we obtain that $\phi(h)$ belongs to the center $Z(G_i)$ of $G_i$, which is trivial. Thus, $\phi$ is trivial. This implies that $N=\pi_i'(N)\subseteq G\ominus G_i$. Using the induction hypothesis, we conclude that $P$ is of the desired form, since now $P\subseteq L(G\ominus G_i)$.

Thus, we may assume that $N\cap G_i\neq \{e\}$ for all $1\leq i\leq n$. Choose $e\neq s_i\in N\cap G_i$ for each $1\leq i\leq n$. Let $E: L(G)\rightarrow P$ and $E': L(G)\rightarrow Q$ be the trace preserving conditional expectations. Define $\phi(g)=\tau(E(g)g^{-1})$ and $\psi(g)=\tau(E'(g)g^{-1})$ for all $g\in G$. Note that we have shown before (in proving Theorem \ref{theorem: NFRC consequences}) that $\phi(g)\psi(g)=0$ for all $e\neq g\in N$ and $\phi, \psi$ are both characters on $G$.

By \cite[Corollary 2.13]{bfr}, we know that every indecomposable character on $G$ is a product of indecomposable characters on each $G_i$. Therefore, we may write 
\begin{align}\label{eq: phi=sum psi_epsilon}\begin{split}
    \phi&=\sum_{(\epsilon_i)_i\in \{0, 1\}^n}c_{\epsilon_1,\ldots, \epsilon_n}1_{\epsilon_1,\ldots, \epsilon_n},\\
    \psi&=\sum_{(\epsilon_i)_i\in \{0, 1\}^n}d_{\epsilon_1,\ldots, \epsilon_n}1_{\epsilon_1,\ldots, \epsilon_n},
\end{split}    
\end{align}
where all the coefficients $c_{\epsilon_1,\ldots, \epsilon_n}, d_{\epsilon_1,\ldots, \epsilon_n}$ are non-negative real numbers with $\sum_{(\epsilon_i)_i\in \{0, 1\}^n}c_{\epsilon_1,\ldots, \epsilon_n}=1=\sum_{(\epsilon_i)_i\in \{0, 1\}^n}d_{\epsilon_1,\ldots, \epsilon_n}$. In formula \eqref{eq: phi=sum psi_epsilon},
$1_{\epsilon_1,\ldots, \epsilon_n}$ stands for the indecomposable character $1_{\epsilon_1}\otimes\cdots\otimes 1_{\epsilon_n}$ and for any $1\leq i\leq n$ we set
$1_{\epsilon_i}=\begin{cases}\delta_e,& \epsilon_i=0,\\
1_{G_i}, &\epsilon_i=1\end{cases}$. In other words, for any $(g_1,\cdots, g_n)\in G$, we have
\begin{align*}
    1_{\epsilon_1,\ldots, \epsilon_n}(g_1,\ldots, g_n):=\begin{cases}
    1, & g_i=e~\mbox{if $\epsilon_i=0$, $\forall i$}\\
    0, & otherwise.
\end{cases}
\end{align*}

Next, suppose that for each $1\leq j\leq n$, there exists some $p_j\in \{0, 1\}^n$ such that $c_{p_j}>0$ and the $j$-th coordinate of $p_j$ is equal to 1. For each $1\leq j\leq n$, we have that
\begin{align*}
    \phi(s_j)=\sum_{(\epsilon_i)_i\in \{0, 1\}^n}c_{\epsilon_1,\ldots, \epsilon_n}1_{\epsilon_1,\ldots, \epsilon_n}(s_j)=\sum_{(\epsilon_i)_i\in \{0, 1\}^n, \epsilon_j=1}c_{\epsilon_1,\ldots, \epsilon_n}\geq c_{p_j}>0, 
\end{align*} where $s_j\in N\cap G_j\setminus \{e\}$, as was defined earlier. 
From $\phi(s_j)\psi(s_j)=0$, we deduce that
\begin{align*}
    0=\psi(s_j)=\sum_{(\epsilon_i)_i\in \{0, 1\}^n, \epsilon_j=1}d_{\epsilon_1,\ldots, \epsilon_n}.
\end{align*}
Equivalently, $d_{\epsilon_1,\ldots, \epsilon_n}=0$ for all $(\epsilon_i)_i\in \{0, 1\}^n$ with $\epsilon_j=1$. Since $j$ is arbitrary, this implies that
\begin{align*}
    \psi=1_{0,\cdots, 0}=\delta_e.
\end{align*}
Therefore, $E'(g)=0$ for all $e\neq g\in G$, hence $Q=\mathbb{C}$, and thus $P=L(N)$, which finished the proof for this case. 

Thus, we may assume that there exists some $j$ such that $c_{\epsilon_1,\ldots, \epsilon_n}=0$ for all $(\epsilon_i)_i\in \{0, 1\}^n$ with $\epsilon_j=1$. Then one has:
\begin{align*}
    \phi=\sum_{(\epsilon_i)_i\in \{0, 1\}^n, \epsilon_j=0}c_{\epsilon_1,\ldots, \epsilon_n}1_{\epsilon_1,\ldots, \epsilon_n}.
\end{align*}
This implies that for any $g=(t_1,\ldots, t_n)\in G$ with $t_j\neq e$ we have $\phi(g)=0$ (equivalently, $E(g)=0$). Hence,  $P\subseteq L(G\ominus G_j)$ and the proof is finished by the induction hypothesis.
\end{proof}

Next,  we show that some generalized wreath product groups have the ISR property. Recall that an action of a group is called faithful if the only group element acting trivially is the identity element.
 
\begin{proposition} \label{prop: ISR for certain generalized wreath product groups}
Let $G=H\wr_I K$, where $H$ is an infinite ICC simple group with non-factorizable regular character, $K$ is an infinite countable group and $K\curvearrowright I$ is a faithful and transitive action on a non-empty set $I$. 
Then every $G$-invariant von Neumann subfactor $P$ in $L(G)$ is a subgroup subfactor. If we further assume $H$ is non-amenable, then $G$ has the ISR property.
\end{proposition}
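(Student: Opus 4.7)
The plan is to combine the character framework of Theorem \ref{theorem: NFRC consequences} with a detailed analysis of the normal subgroup lattice of $G=N\rtimes K$, where $N=\bigoplus_{i\in I}H$. First I would show that every normal subgroup of $G$ is either $\{e\}$ or of the form $N\rtimes K'$ for some $K'\lhd K$. Since $H$ is non-abelian simple (being ICC) and $K\curvearrowright I$ is transitive, any nonzero $K$-invariant normal subgroup of $N$ equals $N$: a commutator argument with a single coordinate copy $H_{i_0}$ and the ICC property produces all of $H$ at every coordinate. The faithfulness of $K\curvearrowright I$ rules out the remaining case $M\cap N=\{e\}$, since the normality identity would force $\sigma_k$ to agree with $\mathrm{Ad}(n)$ for some $n\in N$, but a nontrivial coordinate permutation cannot coincide with an inner automorphism that acts diagonally on the direct-sum decomposition.

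Given any $G$-invariant subfactor $P\subseteq L(G)$, I invoke the framework from the proof of Theorem \ref{theorem: NFRC consequences} to obtain $M\lhd G$ with $L(M)=P\,\bar{\otimes}\,Q$, where $Q=P'\cap L(M)$, together with characters $\phi(s)=\tau(E(s)s^{-1})$ and $\psi(s)=\tau(E'(s)s^{-1})$ on $M$ satisfying $\phi(s)\psi(s)=0$ for $s\neq e$. The case $M=\{e\}$ is trivial, so by the normal subgroup classification I may assume $N\subseteq M$. Restricting $\phi,\psi$ to a single copy $H_{i_0}\subseteq N$ produces two characters on $H$ with the same factorization-vanishing property, and the non-factorizable regular character hypothesis on $H$ yields, without loss of generality, $\phi|_{H_{i_0}}\equiv\delta_e$. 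Transitivity of $K\curvearrowright I$ combined with conjugation invariance of $\phi$ then extends this to $\phi|_{H_i}\equiv\delta_e$ for every $i\in I$.

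Next I would propagate the vanishing of $\phi$ first to $N$ and then to all of $M$ using Lemma \ref{lemma: vanishing character from many conjugates}. For $e\neq n\in N$ with nontrivial component $h_{i_0}$ at $i_0$, conjugating $n$ by a sequence $\{t_m\}\subset H_{i_0}$ yields $t_m n t_m^{-1}(t_{m'}nt_{m'}^{-1})^{-1}=(t_m h_{i_0}t_m^{-1})(t_{m'}h_{i_0}t_{m'}^{-1})^{-1}\in H_{i_0}$, since the other coordinates cancel; the ICC property of $H$ lets me choose $t_m$ with pairwise distinct conjugates of $h_{i_0}$, so these products are nontrivial and hence annihilated by $\phi$, whence $\phi(n)=0$. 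For $g=nk\in M$ with $k\in K'\setminus\{e\}$, faithfulness supplies $i$ with $k(i)\neq i$; conjugating $g$ by elements of $H_i$ and using the trace property of $\phi$ reduces matters to showing $\phi(gug^{-1}u^{-1})=0$ for $u\in H_i\setminus\{e\}$. A direct computation shows $gug^{-1}u^{-1}\in N$ has nontrivial components at positions $i$ and $k(i)$, so it lies in $N\setminus\{e\}$ and is $\phi$-null by the previous step; the Lemma then gives $\phi(g)=0$. Thus $\phi\equiv\delta_e$ on $M$, and the tensor decomposition $L(M)=P\,\bar{\otimes}\,Q$ forces $P=\mathbb{C}$; symmetrically, $\psi|_{H_{i_0}}\equiv\delta_e$ yields $Q=\mathbb{C}$ and $P=L(M)$. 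Either way, $P$ is a subgroup subfactor.

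Finally, when $H$ is also non-amenable, Step 1 shows that every nontrivial normal subgroup of $G$ contains $N\supseteq H$ and is therefore non-amenable, so the amenable radical of $G$ is trivial. For any $G$-invariant von Neumann subalgebra $P\subseteq L(G)$, the $G$-invariant abelian subalgebra $\mathcal{Z}(P)$ is amenable and hence equals $\mathbb{C}$ by \cite[Theorem A]{aho}, so $P$ is a subfactor and the first half concludes the proof. The main obstacle I anticipate is the computation of $gug^{-1}u^{-1}$ in the last propagation step: the argument relies crucially on the faithfulness of $K\curvearrowright I$ to produce an index $i$ with $k(i)\neq i$, so that the resulting element of $N$ has nontrivial components at two distinct positions and can be guaranteed to be genuinely nontrivial without additional hypotheses on $K'$.
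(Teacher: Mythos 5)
Your proposal is correct and follows essentially the same route as the paper: classify the nontrivial normal subgroups of $G$ as $\oplus_I H\rtimes K'$, invoke the decomposition $L(M)=P\,\bar{\otimes}\,Q$ together with the two characters $\phi,\psi$ satisfying $\phi(s)\psi(s)=0$ for $s\neq e$, restrict to a coordinate copy of $H$ to use the non-factorizable regular character hypothesis, propagate the vanishing with Lemma \ref{lemma: vanishing character from many conjugates} and the ICC property, and finish with \cite[Theorem A]{aho}. The only variation is that you propagate at the level of the character, using $\phi(gsg^{-1})=\phi(s)$ for all $g\in G$ (true by $G$-equivariance of $E$, i.e.\ Proposition \ref{prop: basic properties of c.e. related to the ISR property}(2), but you should state this explicitly), whereas the paper keeps a fixed coordinate $s$, runs a subcase analysis for $\phi(gk_0)$ depending on whether $k_0^{-1}s=s$, and then shrinks $\mathrm{supp}(P)$ using the $K$- and $\oplus_I H$-invariance of the algebra $P$ itself.
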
 
 \begin{proof}
 We start by showing that for every normal subgroup $N$ in $G$, if $N\neq \{e\}$, then $N=H\wr_I K_0=\oplus_IH\rtimes K_0$ for some normal subgroup $K_0$ in $K$.
 
 To see this, first note that $N\cap \oplus_IH\neq \{e\}$. Indeed, otherwise, we get that $N$ commutes with $\oplus_IH$, \ie $N\subseteq C_G(\oplus_IH)$, the centralizer of $\oplus_IH$ in $G$. It is clear that $C_G(\oplus_IH)\subseteq \oplus_IH$ since $K\curvearrowright I$ is faithful. Thus $N\cap \oplus_IH=N=\{e\}$, which contradicts to our assumptions. Then, pick any non-trivial $n:=\oplus_{s\in F}h_s\in N\cap \oplus_IH$, where $\emptyset \neq F\subset I$ and $h_s\in H\setminus \{e\}$ for all $s\in F$. Fix any $s_0\in F$. Since $H$ is a simple group, we get that $H$ has trivial center and hence there exists some $h\in H$ such that $hh_{s_0}h^{-1}\neq h_{s_0}$. Denote by $H_{s_0}<\oplus_IH$ the isomorphic copy of the group $H$ at the coordinate $s_0$.  Since $N$ is normal in $G$, we get that $n':=hnh^{-1}\in N$, where $h\in H_{s_0}$ is understood as the element $h$ at coordinate $s_0$ in $\oplus_IH$. Thus $e\neq n'n^{-1}=(hh_{s_0}h^{-1})h_{s_0}^{-1}\in N\cap H_{s_0}$. Hence $N\cap H_{s_0}=H_{s_0}$ since $H$ is simple. Clearly, this implies that $\oplus_IH\subseteq N$ since $K$ acts transitively on $I$.  Then from $\oplus_IH\subseteq N\subseteq G$, we deduce that $N=\oplus_IH\rtimes K_0$ for some normal subgroup $K_0$ in $K$.
 
(1) Let $P\subseteq L(G)$ be a $G$-invariant von Neumann subfactor. By arguing similarly as in proving Theorem \ref{theorem: NFRC consequences},  we get some normal subgroup $N$ in $G$ with $L(N)=P\bar{\otimes} (P'\cap L(N))$. Set $\phi(s):=\tau(s^{-1}E(s))$ and $\psi(s)=\tau(s^{-1}E'(s))$ where $s\in G$, $E: L(G)\rightarrow P$ and $E': L(G)\rightarrow P'\cap L(N)$ denote the trace preserving conditional expectations. Then $\phi(s)\psi(s)=0$ for all $e\neq s\in N$.

If $N=\{e\}$, then $P=\mathbb{C}$ and we are done. So we may assume that $N\neq \{e\}$. By what we have proved, $N=\oplus_IH\rtimes K_0$ for some normal subgroup $K_0$ in $K$. By considering $\phi|_{H_s},\psi|_{H_s}$, where $H_s$ denotes the copy of $H$ in $\oplus_IH$ at some fixed coordinate $s\in I$, we deduce that $\phi|_{H_s}\equiv \delta_e$ or $\psi|_{H_s}\equiv \delta_e$ from the assumption on $H$.
 
Case 1. $\phi|_{H_s}\equiv \delta_e$. 
 
Given any $g=\oplus g_i\in \oplus_IH$ with $g_s\neq e$ and any $k_0\in K_0$, we claim that $\phi(gk_0)=0$.

Indeed, since $H$ is infinite ICC, there are infinitely many distinct $h_n\in H$ with $h_ng_sh_n^{-1}\neq h_mg_sh_m^{-1}$ for any $n\neq m$. Identify $h_n$ with the corresponding element of $H_s$ for each $n\in\mathbb N$.  Then $g_n:=h_n(gk_0)h_n^{-1}$ are pairwise distinct elements in $\oplus_IH\rtimes K_0$ by the following calculation (where we write $\sigma$ for the action $K\curvearrowright I$):
\begin{align*}
g_n^{-1}g_m&=(h_ngk_0h_n^{-1})^{-1}(h_mgk_0h_m^{-1})\\
&=h_n\sigma_{k_0^{-1}}(g^{-1}h_n^{-1}h_mg)h_m^{-1}\\
&=h_n\sigma_{k_0^{-1}}(g_s^{-1}h_n^{-1}h_mg_s)h_m^{-1}\\
&=\begin{cases}
(h_ng_s^{-1}h_n^{-1})(h_mg_sh_m^{-1})\in H_s\setminus \{e\}~&\text{if $k_0^{-1}s=s$}\\
h_nh_m^{-1}\oplus g_s^{-1}h_n^{-1}h_mg_s\in H_s\oplus H_{k_0^{-1}s}\setminus\{e\}~&\text{if $k_0^{-1}s\neq s$}.
\end{cases}
\end{align*}

Subcase 1. $k_0^{-1}s=s$.

Then $\phi(g_n^{-1}g_m)=0$ for all $n\neq m$ since $\phi|_{H_s}=\delta_e$. By the Lemma \ref{lemma: vanishing character from many conjugates}, we deduce that $\phi(gk_0)=\phi(g_n)=0$. 
Note that this implies $\left\|E(gk_0)\right\|_2^2=\tau((gk_0)^{-1}E(gk_0))=\phi(gk_0)=0$, hence $E(gk_0)=0$. 

Subcase 2. $k_0^{-1}s\neq s$.

It suffices to show that $\phi|_{H_s\oplus H_{k_0^{-1}s}}=\delta_{e}$. To see this,
for any $e\neq h\in H_{k_0^{-1}s}$, we have $k_0hk_0^{-1}\in H_s$ and hence $\phi(h)=\phi(k_0hk_0^{-1})=0$
since $\phi|_{H_s}=\delta_s$. Thus $\phi|_{H_{k_0^{-1}s}}=\delta_e$. Next, consider any $\tilde{g}:=h\oplus h'\in H_s\oplus H_{k_0^{-1}s}$ with $h\neq e\neq h'$. Since $H$ is ICC, we may find an infinite sequence $s_n\in H$ such that $s_nhs_n^{-1}\neq s_mhs_m^{-1}$ for all $n\neq m$. Identify $s_n$ with the corresponding element in $H_s$ for each $n$ and set $\tilde{g}_n=s_n\tilde{g}s_n^{-1}$. Then $\phi(\tilde{g}_n^{-1}\tilde{g}_m)=\phi((s_nhs_n^{-1}\oplus h')^{-1}(s_mhs_m^{-1}\oplus h'))
=\phi((s_nhs_n^{-1})^{-1}(s_mhs_m^{-1}))=0$ for all $n\neq m$. Hence $\phi(\tilde{g})=\phi(\tilde{g}_n)=0$. 

In other words, $\text{supp}(P)\subseteq \{gk: g\in \oplus_IH, g_s=e, k\in K_0\}$, where $\text{supp}(P)=\{\text{supp}(x): x\in P\}$ and, as before, $\text{supp}(x)=\{g\in G: \tau(xg^{-1})\neq 0\}$, \ie the collection of all coordinates with non-zero corresponding Fourier coefficients in the Fourier expansion of $x$. Since $P$ is $K$-invariant, it is not hard to see that in fact $\text{supp}(P)\subseteq K_0$ as $K\curvearrowright I$ is transitive. Since $P$ is also $\oplus_IH$-invariant, we can further deduce that $\text{supp}(P)\subseteq \{e\}$ from the faithfulness of $K\curvearrowright I$; equivalently, $P=\mathbb{C}$.

Case 2. $\psi|_H\equiv \delta_e$.  

We can argue similarly as above to deduce that $P'\cap L(N)=\{e\}$. Hence $L(N)=P$.

The proof of the first part is done.

(2) By \cite{aho}, it suffices to check that $G$ has trivial amenable radical, which follows easily from the above characterization of non-trivial normal subgroups in $G$.
 \end{proof}
 
 \begin{remark} 
 Note that for the above $G=H\wr_IK$, it does not have the non-factorizable regular character property since it contains the normal subgroup $\oplus_IH$ which lacks this property by Proposition \ref{prop: NFRC basic properties}.
 \end{remark}

\subsection{The ISR property does not imply triviality of the amenable radical}\label{subsection: non-trivial amenable radical}

In this subsection, we observe that there are groups with the ISR property having non-trivial amenable radical.
\begin{proposition}
Let $G=A_{\infty}$, the finitary alternating group on $\mathbb{N}$, \ie the group of all even permutations of $\mathbb{N}$ with finite supports. Then $G$ satisfies the ISR property.
\end{proposition}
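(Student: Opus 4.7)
My plan is to bootstrap the ISR property for $A_\infty$ from that of $S_\infty$ (established in \cite{jz}), using the normal inclusion $A_\infty \lhd S_\infty$ of index $2$ together with a ``remote transposition'' trick. Since $A_\infty$ is simple, the only candidates for $L(H)$ with $H \lhd A_\infty$ are $\mathbb{C}$ and $L(A_\infty)$, so for any $A_\infty$-invariant von Neumann subalgebra $P \subseteq L(A_\infty)$ with conditional expectation $E \colon L(A_\infty) \to P$, I need to prove $P \in \{\mathbb{C}, L(A_\infty)\}$.

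First, for any $\sigma \in S_\infty \setminus A_\infty$, the conjugate $\sigma P \sigma^{-1} \subseteq L(A_\infty)$ is again $A_\infty$-invariant (using $A_\infty \lhd S_\infty$) and does not depend on the choice of $\sigma$: any two such elements differ by an element of $A_\infty$, which normalises $P$. Denote this common subalgebra by $\alpha(P)$. Then $Q := P \cap \alpha(P)$ is $A_\infty$-invariant and is sent to itself by $\mathrm{Ad}(\sigma)$ (because $\sigma^2 \in A_\infty$ normalises both $P$ and $\alpha(P)$), hence $Q$ is $S_\infty$-invariant. By the ISR property of $S_\infty$, $Q = L(N)$ for some $N \lhd S_\infty$ with $N \subseteq A_\infty$, forcing $Q \in \{\mathbb{C}, L(A_\infty)\}$. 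If $Q = L(A_\infty)$ then $P = L(A_\infty)$ and we are done.

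The crux is the case $Q = \mathbb{C}$, where I claim $P = \mathbb{C}$. For each $g \in A_\infty \setminus \{e\}$ with finite support $F = \suppe(g) \subseteq \mathbb{N}$, Proposition~\ref{prop: control supports for elements in relative commutants} yields $E(u_g) \in L(\fpc(g))$, and since $C_{A_\infty}(g) \supseteq A_{F^c}$, a short argument shows $\fpc(g) = A_F$ (the alternating group on $F$, viewed inside $A_\infty$); hence $E(u_g) \in L(A_F)$. The key move is to choose a transposition $\sigma = (i,j)$ with $i, j \notin F$: such $\sigma$ lies in $S_\infty \setminus A_\infty$ and commutes with every element of $A_F$, so $\mathrm{Ad}(\sigma)$ acts as the identity on $L(A_F)$ and fixes $g$. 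The standard conjugation formula $E_{\alpha(P)}(x) = \sigma E(\sigma^{-1} x \sigma) \sigma^{-1}$ applied at $x = u_g$ then gives
\[
E_{\alpha(P)}(u_g) \;=\; \mathrm{Ad}(\sigma)\bigl(E(u_g)\bigr) \;=\; E(u_g),
\]
so $E(u_g) \in \alpha(P) \cap P = Q = \mathbb{C}$. Trace-preservation $\tau(E(u_g)) = \tau(u_g) = 0$ then forces $E(u_g) = 0$, and since $g$ was arbitrary, $P = \mathbb{C}$.

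The main conceptual hurdle is identifying the ``remote transposition'' trick: once one sees that $E(u_g)$ is confined to the small algebra $L(A_F)$ on which any transposition outside $F$ acts trivially, the $A_\infty$-equivariance of $E$ automatically upgrades to equality $E_{\alpha(P)}(u_g) = E(u_g)$, so that $E(u_g)$ is pinned inside the $S_\infty$-invariant intersection $Q$, and $S_\infty$-ISR then closes the argument. The remaining ingredients — well-definedness of $\alpha(P)$, $S_\infty$-invariance of $Q$, the CE conjugation formula, and the identification $\fpc(g) = A_F$ — are routine verifications.
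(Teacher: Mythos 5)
Your argument is correct, but it proves the proposition by a genuinely different route than the paper. The paper's proof re-runs the $S_{\infty}$ argument of \cite{jz} internally: it redoes that proof inside $A_{\infty}$, observing that every conjugating permutation used there can be replaced by an even one (multiplying by a remote transposition $(i\,j)$ if necessary), and it invokes Thoma's theorem that indecomposable characters of $A_{\infty}$ are restrictions of those of $S_{\infty}$. You instead treat the ISR property of $S_{\infty}$ as a black box and transfer it across the index-two normal inclusion $A_{\infty}\lhd S_{\infty}$: the well-defined conjugate algebra $\alpha(P)$, the $S_{\infty}$-invariance of $Q=P\cap\alpha(P)$, and the remote-transposition trick (using $E(u_g)\in L(A_F)$ with $F=\suppe(g)$, on which any transposition disjoint from $F$ acts trivially) pin $E(u_g)$ inside $Q$, after which simplicity of $A_{\infty}$ and $S_{\infty}$-ISR force $Q\in\{\mathbb{C},L(A_{\infty})\}$ and hence $P\in\{\mathbb{C},L(A_{\infty})\}$. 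Each step checks out: the localization $\fpc(g)\subseteq A_F$ follows from conjugating by elements of $A_{F^c}$ exactly as you indicate (note that formally you need Proposition \ref{prop: basic properties of c.e. related to the ISR property}(2) together with Proposition \ref{prop: control supports for elements in relative commutants}, i.e.\ the unnamed lemma at the start of Section \ref{sec:approximately finite}, to get $E(u_g)\in L(\fpc(g))$, and only the inclusion $\fpc(g)\subseteq A_F$ is actually used), and the conjugation formula for the trace-preserving conditional expectation is standard since $\mathrm{Ad}(u_\sigma)$ is a trace-preserving automorphism of $L(A_{\infty})$. What your approach buys is a clean reduction that avoids reproving the character-theoretic argument of \cite{jz} and avoids Thoma's classification altogether, and it isolates a reusable principle (ISR passes to an index-two simple normal subgroup whenever each $E(u_g)$ is supported in a subalgebra centralized by some odd element); what the paper's approach buys is independence from the statement of the $S_{\infty}$ theorem — it only reuses its method — and it yields along the way the character information for $A_{\infty}$ that fits the general theme of the paper.
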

\begin{proof}
The proof is essentially the same as the one used in showing $G=S_{\infty}$ has the ISR property in \cite{jz}. Possible modifications we need to make include that whenever we use some $s\in S_{\infty}$ to do conjugation on an element in the group ring of $S_{\infty}$, \eg $E((1~2~3))$ therein, we may actually assume that $s\in A_{\infty}$ (after replacing $s$ by $(i~j)s$ for some large enough $i,j$ if necessary) without changing the final result.
Moreover, note that indecomposable characters on $A_{\infty}$ are the restriction of those on $S_{\infty}$ \cite{thoma_mz}, see also \cite[Section 6]{thomas_etds}.
\end{proof}

\begin{theorem}\label{prop: example of nonamenable groups with ISR but nontrivial amenable radical}
Let $B$ be any non-amenable group with only two conjugacy classes (see \cite{osin_annals} for concrete examples). Then
$G=A_{\infty}\times B$ has the ISR property.
\end{theorem}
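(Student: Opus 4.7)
The plan combines the ISR property of the simple factors $A_\infty$ and $B$ with a character-decomposition argument made tractable by the fact that $\IChar(B)$ is a two-point set. Since $B$ has only two conjugacy classes, every character on $B$ takes a constant value $c\in[0,1]$ on $B\setminus\{e\}$ (positive-definiteness tested on $n$-tuples of distinct non-identity elements forces $c\geq 0$ as $n\to\infty$), so $\IChar(B)=\{\delta_e,\textbf{1}_B\}$, i.e.\ $B$ is character rigid. As $B$ is simple and non-amenable it has trivial amenable radical, and Proposition \ref{prop: examples of NSFC-simple groups}(1) combined with Theorem \ref{theorem: NFRC consequences} yields that $B$ has the ISR property.

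Let $P\subseteq L(G)$ be any $G$-invariant von Neumann subalgebra, with trace-preserving conditional expectation $E\colon L(G)\to P$. Since $A_\infty$ is normal in $G$, $L(A_\infty)$ is $G$-invariant; and since $B$ centralises $A_\infty$ the restricted $G$-action on $L(A_\infty)$ factors through $A_\infty$. Hence $P\cap L(A_\infty)$ is $A_\infty$-invariant, and the ISR of $A_\infty$ gives $P\cap L(A_\infty)=L(H_1)$ for some $H_1\in\{\{e\},A_\infty\}$; symmetrically $P\cap L(B)=L(H_2)$ for $H_2\in\{\{e\},B\}$. Next I form $\phi(g):=\tau(E(u_g)u_g^{-1})=\|E(u_g)\|_2^2$, which is a character on $G$ by \cite[Proposition 3.2]{jz}. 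For $a\in A_\infty$ the unitary $u_{(a,e)}$ commutes with $L(B)$, so by Proposition \ref{prop: basic properties of c.e. related to the ISR property}(2) the element $E(u_{(a,e)})$ also commutes with $L(B)$; since $B$ is ICC, $L(B)'\cap L(G)=L(A_\infty)$, so $E(u_{(a,e)})\in L(A_\infty)\cap P=L(H_1)$, giving $\phi(a,e)=\textbf{1}_{H_1}(a)$. Symmetrically, $\phi(e,b)=\textbf{1}_{H_2}(b)$.

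Decompose $\phi=\int_{\IChar(G)}\chi\,d\mu(\chi)$. By \cite[Corollary 2.13]{bfr} and character rigidity of $B$, $\IChar(G)=\IChar(A_\infty)\times\{\delta_e,\textbf{1}_B\}$, so $\mu$ splits as $\tilde\mu_0\otimes\delta_{\delta_e}+\tilde\mu_1\otimes\delta_{\textbf{1}_B}$ with non-negative measures $\tilde\mu_0,\tilde\mu_1$ on $\IChar(A_\infty)$ whose total masses sum to $1$. Evaluating $\phi$ on $A_\infty\times\{e\}$ gives $\int\chi_1\,d(\tilde\mu_0+\tilde\mu_1)=\textbf{1}_{H_1}$ as a character on $A_\infty$; this is extremal (either $\delta_e$ or $\textbf{1}_{A_\infty}$), so by uniqueness of the Choquet representation on the simplex $\IChar(A_\infty)$, $\tilde\mu_0+\tilde\mu_1$ is a Dirac measure. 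Evaluating $\phi$ on $\{e\}\times B$ determines $\tilde\mu_1(\IChar(A_\infty))$ to be $0$ or $1$ according to whether $H_2=\{e\}$ or $H_2=B$, and the two evaluations together pin down $\mu$ in each of the four cases for $(H_1,H_2)$. A direct check shows $\phi=\textbf{1}_H$ for the corresponding normal subgroup $H\in\{\{e\},A_\infty,B,G\}$; since $\phi(g)=\|E(u_g)\|_2^2\leq 1$ with equality iff $u_g\in P$, this forces $E(u_g)=u_g$ for $g\in H$ and $E(u_g)=0$ otherwise, and Proposition \ref{prop: criterion on P=L(H)} concludes $P=L(H)$.

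The hard part is the degenerate case $(H_1,H_2)=(\{e\},\{e\})$: here \cite[Theorem A]{aho} does not apply because $G$ has non-trivial amenable radical $A_\infty\times\{e\}$, and $G$ itself fails the non-factorizable regular character property since $A_\infty\cap B=\{e\}$ (Proposition \ref{prop: NFRC basic properties}(2)). What saves the argument is precisely that $\IChar(B)$ is a two-point set, which reduces the Choquet analysis of $\phi$ to book-keeping on $\IChar(A_\infty)$; then the vanishing of $\phi$ on both $A_\infty\times\{e\}$ and $\{e\}\times B$ forces $\phi=\delta_e^G$ and hence $P=\mathbb{C}$.
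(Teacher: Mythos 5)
Your proof is correct, but it takes a genuinely different route from the paper's. The paper first forces $P$ to be a subfactor by applying \cite[Theorem A]{aho} to $\mathcal{Z}(P)$ (which lands in $L(A_\infty)$, the amenable radical) together with the ISR property of $A_\infty$, then invokes \cite[Theorem 3.1]{cds} to produce a normal subgroup $H\lhd G$ with $L(H)=P\,\bar{\otimes}\,(P'\cap L(H))$, runs a case analysis over $H\in\{\{e\},A_\infty,B,G\}$, uses the Ge--Kadison splitting theorem \cite{gk} when $P$ contains $L(A_\infty)$ or $L(B)$, and in the remaining case kills the character $\phi$ by exploiting the two-conjugacy-class hypothesis directly: conjugating $(s,t)$ by elements of $B$ yields infinitely many conjugates whose pairwise differences lie in $\{e\}\times B$, so Lemma \ref{lemma: vanishing character from many conjugates} gives $\phi(s,t)=0$. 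You bypass all of that machinery: you pin down $\phi$ on the two factors via the relative-commutant computation (Propositions \ref{prop: basic properties of c.e. related to the ISR property} and \ref{prop: control supports for elements in relative commutants}) and the ISR property of $A_\infty$ and $B$, and then determine $\phi$ globally from the Choquet decomposition over $\IChar(G)$, using \cite[Corollary 2.13]{bfr} together with character rigidity of $B$ (which is how the two-conjugacy-class hypothesis enters for you, rather than through conjugation), plus the Bauer-type fact that an extremal barycenter has a Dirac representing measure; Proposition \ref{prop: criterion on P=L(H)} then finishes. Your route works directly for an arbitrary invariant subalgebra, never needing factoriality of $P$, \cite{cds}, or Ge--Kadison, and it makes transparent that only ``$B$ simple, ICC, character rigid, with ISR'' and ``$A_\infty$ simple, ICC, with ISR'' are used, matching the paper's remark after the theorem; the paper's route stays closer to the structural-decomposition techniques used elsewhere in Section 5. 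Two cosmetic points: only the inclusion $\IChar(G)\subseteq\IChar(A_\infty)\times\{\delta_e,\textbf{1}_B\}$ is needed (the reverse inclusion is true but unused), and the Dirac-measure step can even be skipped, since $\tilde\mu_1$ has total mass $0$ or $1$ and in either case $\phi(a,b)$ is already determined by $\phi(a,e)$ and $\phi(e,b)$; neither point is a gap.
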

\begin{remark}
    It would be clear from the proof that $A_{\infty}$ can be replaced by any infinite ICC simple amenable group with the ISR property. 
\end{remark}
\begin{proof}
First, we note that the following properties hold true.

\begin{itemize}
    \item $B$ is a simple ICC group with trivial amenable radical.
    \item $B$ has the ISR property.
    \item For any normal subgroup $H\lhd G$, we have either $H=A_{\infty}$ or $H=B$ or $H=G$ or $H=\{e\}$.
\end{itemize}

The first item is clear and the second item could be proven using Proposition \ref{prop: examples of NSFC-simple groups} since infinite groups with only two conjugacy classes are character rigid. 
To see that the last item holds true, note that $H\cap A_{\infty}\lhd A_{\infty}$, which is simple, thus $H\cap A_{\infty}=\{e\}$ or $A_{\infty}$. If $H\cap A_{\infty}=A_{\infty}$, then $A_{\infty}\subseteq H$ and hence $H=A_{\infty}\times K$ for some $K\lhd B$. Since $B$ is simple, we get that $K=\{e\}$ or $K=B$. In other words, in this case, either $H=A_{\infty}$ or $H=A_{\infty}\times B$. 

Assume now that $H\cap A_{\infty}=\{e\}$. By symmetry, \ie by considering whether $H\cap B=\{e\}$ or $B$, we may assume that $H\cap B=\{e\}$. Then, it is not hard to see that $H=\{(s, \phi(s)): s\in p_1(H)\}$ for some group isomorphism $\phi: p_1(H)\cong p_2(H)$, where $p_1: G\twoheadrightarrow A_{\infty}$ is the coordinate projection onto the first coordinate and $p_2$ is coordinate projection onto the second coordinate. Note that $p_1(H)\lhd A_{\infty}$ and $p_2(H)\lhd B$, therefore, we get both $p_1(H)$  and $p_2(H)$ are trivial and thus $H=\{e\}$.

Next, let us prove $G$ has the ISR property.

Let $P\subseteq L(G)$ be a $G$-invariant von Neumann subalgebra. Then, the center $\mathcal{Z}(P)$ is also $G$-invariant and $\mathcal{Z}(P)\subseteq L(A_{\infty})$ by \cite[Theorem A]{aho}, since $A_{\infty}$ is the amenable radical for $G$. Since $A_{\infty}$ is a simple non-abelian group with the ISR property, we deduce that $\mathcal{Z}(P)=\mathbb{C}$. In other words, $P$ is a subfactor.
Recall that in \cite[Theorem 3.1]{cds}, it was proved that there exists a normal subgroup $H\lhd G$ such that $L(H)=P\bar{\otimes}(P'\cap L(H))$. We split the proof by considering 4 cases.

Case 1: $H=\{e\}$. Clearly, $P=\mathbb{C}$.

Case 2: $H=A_{\infty}$. Then, using that $P\subseteq L(H)=L(A_{\infty})$ is $A_{\infty}$-invariant and $A_{\infty}$ has the ISR property, we deduce that $P=\mathbb{C}$ or $P=L(A_{\infty})$.

Case 3: $H=B$. Similarly to Case 2, we deduce that $P=\mathbb{C}$ or $P=L(B)$.

Case 4: $H=G$. Let $E: L(G)\rightarrow P$ be the trace preserving conditional expectation. Set $\phi(g)=\tau(g^{-1}E(g))$ for all $g\in G$, which is a character on $G$.

Note that $P\cap L(A_{\infty})\subseteq L(A_{\infty})$ is $A_{\infty}$-invaraiant and similarly, $P\cap L(B)\subseteq L(B)$ is $B$-invariant. Hence from the ISR property for $A_{\infty}$ and $B$, we deduce that the following hold.
\begin{itemize}
    \item $P\cap L(A_{\infty})=\mathbb{C}$ or $P\cap L(A_{\infty})=L(A_{\infty})$.
    \item $P\cap L(B)=\mathbb{C}$ or $P\cap L(B)=L(B)$.
\end{itemize}
Note that if $P\cap L(A_{\infty})=L(A_{\infty})$, then $L(A_{\infty})\subseteq P\subseteq L(A_{\infty})\bar{\otimes} L(B)$. Then from the Ge-Kadison splitting theorem \cite{gk}, we deduce that $P=L(A_{\infty})\bar{\otimes}Q$ for some von Neumann subfactor $Q\subseteq L(B)$. Clearly, $Q$ is $B$-invariant, thus $Q=\mathbb{C}$ or $Q=L(B)$; equivalently, $P=L(A_{\infty})$ or $P=L(G)$. The case $P\cap L(B)=L(B)$ can be handled similarly. Therefore, without loss of generality, we may assume that $P\cap L(A_{\infty})=\mathbb{C}$ and $P\cap L(B)=\mathbb{C}$. It suffices to show  $P=\mathbb{C}$; equivalently, we need to show $\phi(g)=0$ for all $e\neq g\in G$.

Notice that for any $s\in A_{\infty}$, $E(s)\in L(B)'\cap L(G)=L(A_{\infty})$ and thus $E(s)\in L(A_{\infty})\cap P=\mathbb{C}$. Hence, $E(s)=\tau(E(s))=\tau(s)=\delta_{s,e}$; equivalently, $\phi(s)=0$ for all $e\neq s\in A_{\infty}$. Similarly, we can also deduce that $E(t)=0$ for all $e\neq t\in B$ and thus $\phi(t)=0$.

To sum up, we have shown that $\phi|_{A_{\infty}}=\delta_e$ and $\phi|_{B}=\delta_e$. 
Take any $e\neq g=(s, t)\in G$. Without loss of generality, we may assume that both $s\neq e$ and $t\neq e$. Then, notice that the assumption on $B$ implies $g$ is conjugate to $(s, t')$ for any $e\neq t'\in B$. Moreover, $\phi((s, t'')(s^{-1}, t'^{-1}))=\phi((e, t''t'^{-1}))=0$ for any $t''\neq t'$. Thus, $\phi(g)=0$ by Lemma \ref{lemma: vanishing character from many conjugates}. This finishes the proof.
\end{proof}

The above proposition shows that having trivial amenable radical is not a necessary condition for an ICC group to have the ISR property.

\begin{question}
Let $H\subseteq G$ be ICC groups with $[G:H]=2$. Is it true that $H$ has the ISR property iff $G$ has the ISR property?
\end{question}

 \section{More questions}\label{sec: more question}
 
In this section, we collect more questions motivated by our character approach to establish the ISR property.

It is generally believed that non-amenable groups with trivial amenable radical may have the ISR property, see \eg \cites{aho,cds}. While by Theorem \ref{prop: example of nonamenable groups with ISR but nontrivial amenable radical}, having trivial amenable radical is not a necessary condition for a non-amenable group to have the ISR property.
Despite various non-amenable groups are known to have the ISR property, the above conjecture is still open. Even worse, we do not even have a reasonable conjecture on which ICC amenable groups may have the ISR property.

Notice that we have proved the ISR property for full groups of simple Bratteli diagrams. An interesting problem would be to check whether it holds for topological full groups as well. Or even for classes of groups having a rich structure of normal subgroups, such as \emph{branch} and \emph{weakly branch groups}, studied intensively by R. Grigorchuk and his co-authors. In fact, the ISR property is not known for  several well-studied classes of groups. 

We summarize the above mentioned problems below.

 \begin{question}
Do the following classes of groups  have the ISR property assuming that the trivial amenable radical condition holds?
\begin{itemize}
\item Branch or weakly branch groups.
    \item  Baumslag-Solitar groups $BS(m, n)=\langle a,b\mid a^{-1}b^ma=b^n\rangle$.
    \item Topological full groups.
    \item Free $m$-generator Burnside group $B(m, n)$ of exponent $n$.
\end{itemize}
\end{question}

A natural generalization of the ISR property would be to consider the situation when the group von Neumann algebra is replaced with the von Neumann algebra associated to a finite type factor representation.
\begin{question} Let $G$ be an infinite group and $\pi$ be its factor-representation. For $H<G$ let $L_\pi(H)$ be the von Neumann algebra generated by the operators $\{\pi(g):g\in H\}$. Under which conditions on the group $G$ and the representation $\pi$ the following is true: each invariant von Neumann subalgebra $P$ of $L_\pi(G)$ is of the form $L_\pi(H)$ for some normal subgroup $H$ of $G$. 
\end{question}

\begin{bibdiv}
\begin{biblist}

\bib{agv}{article}{
   author={Ab\'{e}rt, M.},
   author={Glasner, Y.},
   author={Vir\'{a}g, B.},
   title={Kesten's theorem for invariant random subgroups},
   journal={Duke Math. J.},
   volume={163},
   date={2014},
   number={3},
   pages={465--488},}

\bib{ab}{article}{
   author={Alekseev, V.},
   author={Brugger, R.},
   title={A rigidity result for normalized subfactors},
   journal={J. Operator Theory},
   volume={86},
   date={2021},
   number={1},
   pages={3--15},}

\bib{aho}{article}{
author={Amrutam, T.},
author={Hartman, Y.},
author={Oppelmayer, H.},
title={On the amenable subalgebras of group von Neumann algebras},
 journal={J. Funct. Anal.},
   volume={288},
   date={2025},
   number={2},
   pages={Paper No. 110718, 20 pp.},}

\bib{aj}{article}{
   author={Amrutam, T.},
   author={Jiang, Y.},
   title={On invariant von Neumann subalgebras rigidity property},
   journal={J. Funct. Anal.},
   volume={284},
   date={2023},
   number={5},
   pages={Paper No. 109804, 26 pp.},}

\bib{ap_book}{book}{
author={Anatharaman, C.},
author={Popa, S.},
title={An introduction to II$_1$ factors},
year={2019},
status={available at \url{https://www.math.ucla.edu/~popa/Books/IIunV15.pdf}}
}

   \bib{bbh}{article}{
   author={Bader, Uri},
   author={Boutonnet, R.},
   author={Houdayer, C.},
   title={Charmenability of higher rank arithmetic groups},
   journal={Ann. H. Lebesgue},
   volume={6},
   date={2023},
   pages={297--330},}

\bib{bbhp}{article}{
   author={Bader, U.},
   author={Boutonnet, R.},
   author={Houdayer, C.},
   author={Peterson, J.},
   title={Charmenability of arithmetic groups of product type},
   journal={Invent. Math.},
   volume={229},
   date={2022},
   number={3},
   pages={929--985},}

   \bib{Be89}{article}{
author={Bekka, B.},
title={Amenable unitary representations of locally compact groups},
  journal={Invent. Math.},
  year={1990},
  volume={100},
  pages={383-401},
   }

\bib{bekka}{article}{
   author={Bekka, B.},
   title={Operator-algebraic superridigity for SL$_n(\mathbb{Z})$, $n\geq
   3$},
   journal={Invent. Math.},
   volume={169},
   date={2007},
   number={2},
   pages={401--425},}

    \bib{bekka-23}{article}{
   author={Bekka, B.},
   title={The Bohr compactification of an arithmetic group},
   year={2023},
   status={arXiv: 2304.09045},
   }

\bib{bd_book}{book}{
   author={Bekka, B.},
   author={de la Harpe, Pierre},
   title={Unitary representations of groups, duals, and characters},
   series={Mathematical Surveys and Monographs},
   volume={250},
   publisher={American Mathematical Society, Providence, RI},
   date={[2020] \copyright 2020},
   pages={xi+474},}

\bib{bfr}{article}{
   author={Bekka, B.},
   author={Francini, C.},
   title={Characters of algebraic groups over number fields},
   journal={Groups Geom. Dyn.},
   volume={16},
   date={2022},
   number={4},
   pages={1119--1164},}

   \bib{BezuglyiKwiatkowskiMedynetsSolomyak:2013}{article}{
   author={Bezuglyi, S.},
   author={Kwiatkowski, J.},
   author={Medynets, K.},
   author={Solomyak, B.},
   title={Finite rank Bratteli diagrams: structure of invariant measures},
   journal={Trans. Amer. Math. Soc.},
   volume={365},
   date={2013},
   number={5},
   pages={2637--2679},}

\bib{bh1}{article}{
   author={Boutonnet, R.},
   author={Houdayer, C.},
   title={Stationary characters on lattices of semisimple Lie groups},
   journal={Publ. Math. Inst. Hautes \'{E}tudes Sci.},
   volume={133},
   date={2021},
   pages={1--46},}

\bib{bru}{book}{
author={Brugger, R.},
title={Characters on infinite groups and rigidity},
note={Doctoral Dissertation (Ph.D.)},
publisher={University of Göttingen},
year={2018},
}

\bib{cds}{article}{
   author={Chifan, I.},
   author={Das, S.},
   author={Sun, B.},
   title={Invariant subalgebras of von Neumann algebras arising from
   negatively curved groups},
   journal={J. Funct. Anal.},
   volume={285},
   date={2023},
   number={9},
   pages={Paper No. 110098, 28 pp.},}

\bib{dm_ggd2014}{article}{
   author={Dudko, A.},
   author={Medynets, K.},
   title={Finite factor representations of Higman-Thompson groups},
   journal={Groups Geom. Dyn.},
   volume={8},
   date={2014},
   number={2},
   pages={375--389},}

\bib{dm_jfa2013}{article}{
   author={Dudko, A.},
   author={Medynets, K.},
   title={On characters of inductive limits of symmetric groups},
   journal={J. Funct. Anal.},
   volume={264},
   date={2013},
   number={7},
   pages={1565--1598},}

\bib{dm_manuscript}{article}{
   author={Dudko, A.},
   author={Medynets, K.},
   title={On Vershik's conjecture about characters of full groups},
   journal={Manuscript in preparation},}

\bib{es}{article}{
   author={Eckhardt, C.},
   author={Shulman, T.},
   title={On amenable Hilbert-Schmidt stable groups},
   journal={J. Funct. Anal.},
   volume={285},
   date={2023},
   number={3},
   pages={Paper No. 109954, 31 pp.},}

\bib{fgs}{article}{
author={Fournier-Facio, F.},
author={Gerasimova, M.},
author={Spaas, P.},
title={Local Hilbert-Schmidt stability},
journal={J. Algebra},
   volume={663},
   date={2025},
   pages={589--629},}

\bib{gk}{article}{
   author={Ge, L.},
   author={Kadison, R.},
   title={On tensor products for von Neumann algebras},
   journal={Invent. Math.},
   volume={123},
   date={1996},
   number={3},
   pages={453--466},}

    \bib{gel}{article}{
   author={Gelander, T.},
   title={A view on invariant random subgroups and lattices},
   conference={
      title={Proceedings of the International Congress of
      Mathematicians---Rio de Janeiro 2018. Vol. II. Invited lectures},
   },
   book={
      publisher={World Sci. Publ., Hackensack, NJ},
   },
   date={2018},
   pages={1321--1344},}

   \bib{GPS: 1995}{article}{
   author={Giordano, T.},
   author={Putnam, I. F.},
   author={Skau, C. F.},
   title={Topological orbit equivalence and $C^*$-crossed products},
   journal={J. Reine Angew. Math.},
   volume={469},
   date={1995},
   pages={51--111},}

\bib{GPS: 1999}{article}{
   author={Giordano, T.},
   author={Putnam, I. F.},
   author={Skau, C. F.},
   title={Full groups of Cantor minimal systems},
   journal={Israel J. Math.},
   volume={111},
   date={1999},
   pages={285--320},}

\bib{hs_1}{article}{
   author={Hadwin, D.},
   author={Shulman, T.},
   title={Tracial stability for $C^*$-algebras},
   journal={Integral Equations Operator Theory},
   volume={90},
   date={2018},
   number={1},
   pages={Paper No. 1, 35 pp.},}

   \bib{hs_2}{article}{
   author={Hadwin, D.},
   author={Shulman, T.},
   title={Stability of group relations under small Hilbert-Schmidt
   perturbations},
   journal={J. Funct. Anal.},
   volume={275},
   date={2018},
   number={4},
   pages={761--792},}

\bib{HermanPutnamSkau: 1992}{article}{
   author={Herman, R. H.},
   author={Putnam, I. F.},
   author={Skau, C. F.},
   title={Ordered Bratteli diagrams, dimension groups and topological
   dynamics},
   journal={Internat. J. Math.},
   volume={3},
   date={1992},
   number={6},
   pages={827--864},}
    
   \bib{ioana_icm}{article}{
   author={Ioana, A.},
   title={Rigidity for von Neumann algebras},
   conference={
      title={Proceedings of the International Congress of
      Mathematicians---Rio de Janeiro 2018. Vol. III. Invited lectures},
   },
   book={
      publisher={World Sci. Publ., Hackensack, NJ},
   },
   date={2018},
   pages={1639--1672},}

\bib{jz}{article}{
author={Jiang, Y.},
author={Zhou, X.},
title={An example of infinite amenable group with the ISR property},
journal={Math. Z.},
   volume={307},
   date={2024},
   number={2},
   pages={Paper No. 23, 11 pp.},
}

\bib{kp}{article}{
   author={Kalantar, M.},
   author={Panagopoulos, N.},
   title={On invariant subalgebras of group $C$* and von Neumann algebras},
   journal={Ergodic Theory Dynam. Systems},
   volume={43},
   date={2023},
   number={10},
   pages={3341--3353},}

\bib{KerovVershik-AsymptoticTheorey-81}{article}{,
	author = {S. Kerov and A. Vershik},
	journal = {(Russian) Funktsional. Anal. i Prilozhen.},
	number = {4},
	pages = {15-27},
	title = {Asymptotic theory of the characters of a symmetric group},
	volume = {15},
	year = {1981}}

\bib{lv}{article}{
   author={Levit, A.},
   author={Vigdorovich, I.},
   title={Characters of solvable groups, Hilbert-Schmidt stability and dense
   periodic measures},
   journal={Math. Ann.},
   volume={389},
   date={2024},
   number={3},
   pages={3181--3229},}

\bib{lmr}{article}{
   author={Lubotzky, A.},
   author={Mozes, S.},
   author={Raghunathan, M. S.},
   title={The word and Riemannian metrics on lattices of semisimple groups},
   journal={Inst. Hautes \'{E}tudes Sci. Publ. Math.},
   number={91},
   date={2000},
   pages={5--53 (2001)},}

   \bib{mar1}{article}{
   author={Margulis, G. A.},
   title={Some remarks on invariant means},
   journal={Monatsh. Math.},
   volume={90},
   date={1980},
   number={3},
   pages={233--235},}
\bib{Okunkov-CharactersSymmetric-97}{article}{
	addendum = {Eng. transl. in J. Math. Sci. (New York) 96.5 (1999), 3550---3589},
	author = {A. Okun\cprime kov},
	journal = {Zap. Nauchn. Sem. S.-Peterburg. Otdel. Mat. Inst. Steklov. (POMI)},
	pages = {166-228},
	series = {Teor. Predst. Din. Sist. Komb. i Algoritm. Metody. 2},
	title = {On representations of the infinite symmetric group},
	volume = {240},
	year = {1997}}

\bib{osin_annals}{article}{
   author={Osin, D.},
   title={Small cancellations over relatively hyperbolic groups and
   embedding theorems},
   journal={Ann. of Math. (2)},
   volume={172},
   date={2010},
   number={1},
   pages={1--39},}


\bib{rv}{article}{
   author={Ranga Rao, R.},
   author={Varadarajan, V. S.},
   title={On the decomposition of Haar measure in compact groups},
   journal={Fund. Math.},
   volume={49},
   date={1960/61},
   pages={119--127},}

\bib{rap}{article}{
   author={Rapinchuk, A.},
   title={On the finite-dimensional unitary representations of Kazhdan
   groups},
   journal={Proc. Amer. Math. Soc.},
   volume={127},
   date={1999},
   number={5},
   pages={1557--1562},}

\bib{ss_book}{book}{
   author={Sinclair, A. M.},
   author={Smith, R. R.},
   title={Finite von Neumann algebras and masas},
   series={London Mathematical Society Lecture Note Series},
   volume={351},
   publisher={Cambridge University Press, Cambridge},
   date={2008},
   pages={x+400},}

   \bib{sul}{article}{
   author={Sullivan, D.},
   title={For $n>3$ there is only one finitely additive rotationally
   invariant measure on the $n$-sphere defined on all Lebesgue measurable
   subsets},
   journal={Bull. Amer. Math. Soc. (N.S.)},
   volume={4},
   date={1981},
   number={1},
   pages={121--123},}

\bib{TakI}{book}{
    AUTHOR = {Takesaki, M.},
     TITLE = {Theory of operator algebras. {I}},
    SERIES = {Encyclopaedia of Mathematical Sciences},
    VOLUME = {124},
      NOTE = {Reprint of the first (1979) edition,
              Operator Algebras and Non-commutative Geometry, 5},
 PUBLISHER = {Springer-Verlag, Berlin},
      YEAR = {2002},
     PAGES = {xx+415},
      ISBN = {3-540-42248-X},
   MRCLASS = {46Lxx (46-01)},
  MRNUMBER = {1873025},
}

\bib{thoma_mz}{article}{
   author={Thoma, E.},
   title={Die unzerlegbaren, positiv-definiten Klassenfunktionen der
   abz\"{a}hlbar unendlichen, symmetrischen Gruppe},
   language={German},
   journal={Math. Z.},
   volume={85},
   date={1964},
   pages={40--61},}

\bib{thomas_etds}{article}{
   author={Thomas, S.},
   title={Characters of inductive limits of finite alternating groups},
   journal={Ergodic Theory Dynam. Systems},
   volume={40},
   date={2020},
   number={4},
   pages={1068--1082},}

\bib{tt_algebra}{article}{
   author={Thomas, S.},
   author={Tucker-Drob, R.},
   title={Invariant random subgroups of inductive limits of finite
   alternating groups},
   journal={J. Algebra},
   volume={503},
   date={2018},
   pages={474--533},}

\bib{wan}{article}{
   author={Wang, P. S.},
   title={On isolated points in the dual spaces of locally compact groups},
   journal={Math. Ann.},
   volume={218},
   date={1975},
   number={1},
   pages={19--34},}

\end{biblist}
\end{bibdiv}

\end{document}